\newdimen\bibspace
\renewenvironment{thebibliography}[1]{%
 \section*{\refname 
       \@mkboth{\MakeUppercase\refname}{\MakeUppercase\refname}}%
     \list{\@biblabel{\@arabic\c@enumiv}}%
          {\settowidth\labelwidth{\@biblabel{#1}}%
           \leftmargin\labelwidth
           \advance\leftmargin\labelsep
           \itemsep\bibspace
           \parsep\z@skip     %
           \@openbib@code
           \usecounter{enumiv}%
           \let\p@enumiv\@empty
           \renewcommand\theenumiv{\@arabic\c@enumiv}}%
     \sloppy\clubpenalty4000\widowpenalty4000%
     \sfcode`\.\@m}
    {\def\@noitemerr
      {\@latex@warning{Empty `thebibliography' environment}}%
     \endlist}
\newtheorem{thm}{Theorem}[section]
\newtheorem{lem}[thm]{Lemma}
\newtheorem{cor}[thm]{Corollary}
\newtheorem{rem}[thm]{Remark}
\theoremstyle{definition}
\newtheorem{example}[thm]{Example}
\def\XXint#1#2#3{{\setbox0=\hbox{$#1{#2#3}{\int}$}
  \vcenter{\hbox{$#2#3$}}\kern-.5\wd0}}
\newcommand{\al}{\alpha}                \newcommand{\lda}{\lambda}
\newcommand{\om}{\Omega}                \newcommand{\pa}{\partial}
\newcommand{\va}{\varepsilon}           \newcommand{\ud}{\mathrm{d}}
\newcommand{\be}{\begin{equation}}      \newcommand{\ee}{\end{equation}}
\newcommand{\Lda}{\Lambda}              
\newcommand{\R}{\mathbb{R}}
\begin{document}

\title{\textbf{Optimal regularity and fine asymptotics for the porous medium equation in bounded domains}\bigskip}

\author{\medskip  Tianling Jin,\quad Xavier Ros-Oton, \quad
Jingang Xiong}

\date{\today}

\maketitle

\begin{abstract} 
We prove the optimal global regularity of nonnegative solutions to the porous medium equation in smooth bounded domains with the zero Dirichlet boundary condition after certain waiting time $T^*$. 
More precisely, we show that solutions are $C^{2,\alpha}(\overline\Omega)$ in space, with $\alpha=\frac1m$, and $C^\infty$ in time (uniformly in $x\in \overline\Omega$), for $t>T^*$.

Furthermore, this allows us to refine the asymptotics of solutions for large times, improving the best known results so far in two ways: we establish a faster rate of convergence $O(t^{-1-\gamma})$, and we prove that the convergence holds in the $C^{1,\alpha}(\overline\Omega)$ topology.

\medskip

\noindent{\it Keywords}: Porous medium equations, regularity, asymptotics.

\medskip

\noindent {\it MSC (2010)}: Primary 35B65; Secondary 35K59, 35K67.

\end{abstract}

\section{Introduction}

Let $\om$ be a bounded smooth domain in $\R^n$ with $n\ge 1$ and let $m>1$. Consider the porous medium equation (PME):
\be \label{eq:main}
\left\{\begin{array}{rcll}
\pa_t u -\Delta (u^m) & = & 0  \qquad \mbox{ in }\ \om \times (0,\infty) \\
u & = & 0  \qquad \mbox{ on }\ \pa \om \times (0,\infty)\\
u(x,0) & = & u_0 (x)\ge 0. 
\end{array}\right.
\ee

The equation \eqref{eq:main} is a slow diffusion equation, which means that if $u_0$ is compactly supported in $\Omega$, then the solution $u(\cdot,t)$ with such initial data will still be compactly supported in $\Omega$ at least for a short time, and thus, $\partial\{u>0\}$ is a free boundary. 
Due to the degeneracy of the equation near $\{u=0\}$, the initial boundary value problem \eqref{eq:main} does not possess in general a classical solution (see, e.g., Oleinik-Kalashnikov-Chzou \cite{OKC}). Thus, it is necessary to work with a suitable class of weak solutions. We say that $u$ is a nonnegative weak solution of \eqref{eq:main} if $u\in C([0,+\infty): L^1(\Omega))$ such that $u^m\in L^2_{loc}([0,+\infty):H^1_0(\Omega))$ and $u$ satisfies \eqref{eq:main} in the sense of distribution. The initial boundary value problem \eqref{eq:main} is then well-posed for such weak solutions.
H\"older continuity of the weak solutions  and their free boundaries were proved by Caffarelli-Friedman \cite{CF}. Their higher regularities were proved, for example, by Caffarelli-V\'azquez-Wolanski \cite{CVW}, Caffarelli-Wolanski \cite{CW}, Daskalopoulos-Hamilton \cite{DH}, Koch \cite{Koch}, Daskalopoulos-Hamilton-Lee \cite{DHL} and Kim-Lee \cite{KimLee} under a non-degeneracy condition of the initial data. Kienzler-Koch-V\'azquez \cite{KKV} proved the smoothness of the weak solution and the free boundary for all large times. 
The one spatial dimension case has been studied earlier in, e.g., Aronson \cite{A, A70}, Knerr \cite{Knerr77},  Aronson-Caffarelli-V\'azquez \cite{ACV}, Aronson-V\'azquez \cite{AV87}, H\"ollig-Kreiss \cite{HK600}, and Angenent \cite{Angenent}, from which we know that the free boundary interface is eventually analytic for the equation \eqref{eq:main} posed in $\R\times(0,+\infty)$ with compactly supported initial data. 
Several universal estimates for the porous medium equation have also been obtained by Aronson-B\'enilan \cite{AB} and Dahlberg-Kenig \cite{DKenig}. 
We refer to Daskalopoulos-Kenig \cite{DaK} and V\'azquez \cite{Vaz} for more references on the initial boundary value problem for the PME.

An explicit solution to the PME which often plays an important role is the so-called \emph{friendly giant solution}, given by
\be\label{friendly-giant}
U(x,t) := t^{-\frac{1}{m-1}} S(x),
\ee
where $S$ is the unique nontrivial nonnegative solution of
\begin{equation}\label{eq:stationary}
-\Delta (S^m)={\textstyle\frac{1}{m-1}}S \quad\mbox{in }\Omega\quad \mbox{and}\quad S=0\quad\mbox{on }\partial\Omega.
\end{equation}
The existence and uniqueness of this $S$ was proved by Aronson-Peletier \cite{AP1981}. It follows from Dahlberg-Kenig \cite{DKenig} that for every weak solution $u$ of \eqref{eq:main}, it satisfies
\begin{equation}\label{eq:universalupperbound}
u\le U\quad\mbox{in }\overline\Omega\times[0,+\infty).
\end{equation}
See also Proposition 1.3 of V\'azquez \cite{Vaz2004}. Notice that $U(x,0)=+\infty$ in $\Omega$, while $U\asymp d^{1/m}$ for all $t>0$, where $d(x)={\rm dist}(x,\partial\Omega)$.

\subsection{Optimal regularity of solutions}

Aronson-Peletier \cite{AP1981} proved that there exists some waiting time $T^*$ such that any nontrivial solution $u$ of \eqref{eq:main} will be positive in $\Omega$ for all time afterwards, and moreover
\begin{equation}\label{T*}
u\asymp d^{1/m}\asymp S \quad \textrm{for}\quad t>T^*.
\end{equation}
{A uniform estimate of $T^*$ is given by Bonforte-V\'azquez \cite{BV15}.  See also Bonforte-Figalli-V\'azquez \cite{BFV18}.}
This, combined with interior regularity estimates, implies that $u(\cdot,t)\in C^{1/m}(\overline\Omega)$ for all $t>T^*$  (see \cite{BFR17}) and that 
\be\label{eq-Lip-x}
u^m(\cdot,t)\in {\rm Lip}(\overline\Omega) \quad \textrm{for all}\  t>T^*.
\ee
A long standing open question in this context is the following: 
\[\textit{Are solutions to the PME  \eqref{eq:main} classical up to the boundary for $t>T^*$?}\]
\[\textit{In other words, is $u^m(\cdot,t)\in C^2(\overline\Omega)$ for all $t>T^*$?}\]
The goal of this paper is to answer this question, and to obtain the optimal regularity near $\partial\Omega$ for solutions to \eqref{eq:main}.
Furthermore, as explained below, this will allow us to obtain finer asymptotics for large times $t\to\infty$.

Our first result reads as follows.

\begin{thm}\label{eq:mainpme1}
Let $\om\subset \R^n$ be any bounded smooth domain, $m>1$, and $u_0\in L^1(\Omega)$ with $u_0\not\equiv0$. 
Let $u$ be the nonnegative weak solution of \eqref{eq:main}, and let $T^*$ be the first time for which \eqref{T*} holds.
Then,
\be \label{eq-optimal-x}
u^m(\cdot,t) \in C^{2+\frac{1}{m}}(\overline\Omega)\quad\mbox{for all }t>T^*,
\ee
and
\[u^m(x,\cdot)\in C^\infty((T^*,\infty)) \quad \textrm{uniformly in} \ x\in\overline\Omega.\]
Moreover, \eqref{eq-optimal-x} does not hold in general with exponent $2+\frac1m+\varepsilon$ for any $\varepsilon>0$.

Furthermore, we also have 
\[
\partial_t^k (u^m)(\cdot,t) \in C^{2+\frac{1}{m}}(\overline\Omega)\quad\mbox{for all }t>T^*
\]
for all $k\in \mathbb N$.
\end{thm}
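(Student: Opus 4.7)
The plan is to reduce Theorem~\ref{eq:mainpme1} to a boundary Hölder estimate for $\partial_t u$. Writing $w = u^m$, the PME gives $-\Delta w = -\partial_t u$ in $\Omega$ with $w = 0$ on $\partial\Omega$, so by classical elliptic Schauder theory on the smooth domain $\Omega$, the assertion \eqref{eq-optimal-x} would follow at once from
$$ \partial_t u(\cdot,t) \in C^{1/m}(\overline\Omega) \quad \mbox{for all } t > T^*. $$
Thus the crux of the proof is to establish this boundary H\"older bound for $\phi := \partial_t u$.

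To obtain it, I would differentiate the PME in time, producing the linear degenerate parabolic equation
$$ \partial_t \phi = \Delta\bigl(m u^{m-1}\phi\bigr). $$
By \eqref{T*} we have $u \asymp d^{1/m}$ for $t > T^*$, so the principal coefficient $m u^{m-1}$ vanishes at $\partial\Omega$ exactly like $d^{(m-1)/m}$. After flattening $\partial\Omega$ locally, the model operator is of the form $\partial_\tau - x_n^{(m-1)/m}\Delta + (\mbox{lower order})$; the natural H\"older regularity theory for such degenerate parabolic operators assigns to the boundary an intrinsic H\"older scale whose Euclidean counterpart is precisely $C^{1/m}$. The plan is to adapt the Schauder-type framework for PME-related degenerate parabolic equations --- in the spirit of Daskalopoulos--Hamilton, Koch, and Kienzler--Koch--V\'azquez --- to conclude $\phi \in C^{1/m}(\overline\Omega)$ uniformly for $t$ in compact subintervals of $(T^*,\infty)$. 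This is the main technical difficulty; the boundary behavior $\partial_t u \asymp d^{1/m}$ visible in the friendly giant $U$ confirms that $1/m$ is the sharp H\"older exponent.

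Once the $C^{1/m}$ bound on $\partial_t u$ is in hand, elliptic Schauder applied to $-\Delta w = -\phi$ with $w|_{\partial\Omega} = 0$ delivers $u^m \in C^{2+1/m}(\overline\Omega)$. Higher time derivatives are handled inductively: $\partial_t^k(u^m)$ vanishes on $\partial\Omega$, and $\partial_t^{k+1} u = \Delta(\partial_t^k(u^m))$, while $\partial_t^{k+1} u$ itself solves a degenerate parabolic equation of the same type as the one for $\phi$, with coefficients given by polynomial combinations of $u^{m-j}$ and lower-order $\partial_t^i u$ --- all controlled because the nondegeneracy $u \asymp d^{1/m}$ persists in time. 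Iterating the degenerate Schauder step yields $\partial_t^k(u^m) \in C^{2+1/m}(\overline\Omega)$ for every $k$, and combined with uniform $L^\infty$ bounds on all time derivatives this gives $u^m \in C^\infty$ in $t$ uniformly in $x \in \overline\Omega$.

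Sharpness of the exponent is witnessed by the friendly giant $U(x,t) = t^{-1/(m-1)} S(x)$: $S$ satisfies $-\Delta(S^m) = \frac{1}{m-1} S$ with $S \asymp d^{1/m}$, so the right-hand side lies in $C^{1/m}(\overline\Omega)$ but in no strictly better H\"older class at $\partial\Omega$ (as $d^{1/m}$ itself is only $C^{1/m}$). Hence $S^m \notin C^{2+1/m+\varepsilon}(\overline\Omega)$ for any $\varepsilon > 0$, and the same holds for $U^m$ at each fixed time, confirming the optimality claim.
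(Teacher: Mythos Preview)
Your reduction to elliptic Schauder is correct: once $\partial_t u(\cdot,t)\in C^{1/m}(\overline\Omega)$, the estimate \eqref{eq-optimal-x} follows. The gap is in how you propose to obtain that H\"older bound. You plan to apply a degenerate parabolic Schauder theory to the linearized equation $\partial_t\phi=\Delta(mu^{m-1}\phi)$, but any such Schauder framework (in particular Kim--Lee's, which is the one adapted to the Dirichlet setting with weight $d^{1-1/m}$) requires the principal coefficient, normalized by the appropriate power of $d$, to already lie in a H\"older class. Concretely, you would need $u^{m-1}/d^{(m-1)/m}$, equivalently $(u^m/d)^{(m-1)/m}$, to be H\"older up to $\partial\Omega$. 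From \eqref{T*} and \eqref{eq-Lip-x} one only knows $u^m/d$ is \emph{bounded}; its H\"older continuity is precisely the open step, and your proposal is circular on this point. The references you cite (Daskalopoulos--Hamilton, Koch, KKV) treat the free boundary case, where the pressure vanishes \emph{linearly} and the initial regularity comes from the nondegeneracy hypothesis on the data; they do not supply the missing H\"older estimate in the Dirichlet problem with fractional-power degeneracy.

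The paper resolves this by a different route. Rather than linearizing, it writes $w=u^m/G$ (with $G$ a Green's function, so $G\asymp d$) and observes that $w$ satisfies the \emph{nonlinear} weighted equation $G^{p+1}\partial_t w^p=\mathrm{div}(G^2\nabla w)$ with $p=1/m$; a De Giorgi iteration tailored to this equation (Section~2) yields H\"older continuity of $u^m/d$ directly from the two-sided bound \eqref{T*}, with no prior regularity of coefficients assumed. Only after this step is the Kim--Lee Schauder theory invoked, and then the bootstrap and elliptic Schauder on time slices give $C^{2+1/m}$. There is also an approximation argument (smoothing $u_0$ near $\partial\Omega$ to make it compatible) that you do not mention, needed because for general $u_0\in L^1$ the quantity $\partial_t u$ is not a priori a classical object to which one can apply pointwise PDE arguments. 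Your sharpness paragraph via the friendly giant is correct and matches the paper.
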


Notice that, for any $\varepsilon>0$, the friendly giant solution \eqref{friendly-giant} does \emph{not} belong to $C^{2+\frac1m+\varepsilon}(\overline\Omega)$ for any $t>0$.
Thus, this already shows the optimality of our result above.

One can also deduce from \eqref{eq-optimal-x} another optimal regularity result for $u$, as follows.

\begin{cor}\label{cor-u}
Let $\om\subset \R^n$ be any bounded smooth domain, $m>1$, and $u_0\in L^1(\Omega)$ with $u_0\not\equiv0$. 
Let $u$ be the nonnegative weak solution of \eqref{eq:main}, and let $T^*$ be the first time for which \eqref{T*} holds.
Then,
\be \label{eq-optimal-u}
\frac{u}{S} \in C^{1+\frac{1}{m}}(\overline\Omega)\quad\mbox{for all }t>T^*,
\ee
where $S$ is given by \eqref{eq:stationary}.

Moreover, \eqref{eq-optimal-u} does not hold in general with exponent $1+\frac1m+\varepsilon$ \ for any $\varepsilon>0$.
\end{cor}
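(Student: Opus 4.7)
The plan is to reduce Corollary \ref{cor-u} to Theorem \ref{eq:mainpme1} by working with the identity
\[
\frac{u}{S}=\left(\frac{u^m}{S^m}\right)^{\!1/m}.
\]
The point is that the right-hand side is a quotient of two $C^{2+1/m}(\overline\Omega)$ functions that both vanish to first order on $\partial\Omega$, and a standard division lemma for such ratios loses exactly one derivative, landing us in $C^{1+1/m}(\overline\Omega)$.

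First I would assemble the ingredients. Theorem \ref{eq:mainpme1} provides $u^m(\cdot,t)\in C^{2+1/m}(\overline\Omega)$ for $t>T^*$, while $S^m\in C^{2+1/m}(\overline\Omega)$ follows from classical Schauder theory applied to $-\Delta S^m=\frac{S}{m-1}$, since $S\in C^\infty(\Omega)\cap C^{1/m}(\overline\Omega)$ (using $S\asymp d^{1/m}$). By \eqref{T*}, for $t>T^*$ we have $u^m\asymp S^m\asymp d$, so both functions vanish on $\partial\Omega$ with strictly positive inward normal derivative.

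Next I would invoke the following division lemma: if $F,G\in C^{2+\alpha}(\overline\Omega)$ both vanish on $\partial\Omega$ and $G\asymp d$, then $F/G\in C^{1+\alpha}(\overline\Omega)$. Localizing near a boundary point and flattening $\partial\Omega$ to $\{x_n=0\}$, one writes
\[
F(x',x_n)=x_n\widetilde F(x',x_n),\qquad \widetilde F(x',x_n)=\int_0^1 \partial_n F(x',tx_n)\,\ud t,
\]
and analogously for $G$. The integral representation transfers the $C^{1+\alpha}$ regularity of $\partial_n F$ (and of $\partial_n G$) to $\widetilde F$ (and $\widetilde G$), and $\widetilde G>0$ after localization. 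Hence $F/G=\widetilde F/\widetilde G\in C^{1+\alpha}(\overline\Omega)$. Applying this with $F=u^m(\cdot,t)$, $G=S^m$, and $\alpha=\tfrac1m$ yields $u^m/S^m\in C^{1+1/m}(\overline\Omega)$. Since $u\asymp S$, the ratio is bounded away from $0$ and $+\infty$ on $\overline\Omega$, and composition with the $C^\infty$ map $s\mapsto s^{1/m}$ on $(0,\infty)$ preserves $C^{1+1/m}$ regularity. This gives \eqref{eq-optimal-u}.

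The main obstacle is the optimality. The friendly giant is a bad test case, since $U/S\equiv t^{-1/(m-1)}$ is constant in $x$ and hence $C^\infty$. Instead I would perform a boundary asymptotic expansion. Writing
\[
u^m=b_0(x',t)d+b_1(x',t)d^{2+1/m}+o(d^{2+1/m}),\qquad S^m=a_0(x')d+a_1(x')d^{2+1/m}+o(d^{2+1/m}),
\]
with $a_1,b_1$ determined by plugging into \eqref{eq:stationary} and \eqref{eq:main} and matching the leading $d^{1/m}$ terms, one obtains
\[
\frac{u^m}{S^m}=\frac{b_0}{a_0}+\frac{b_1a_0-b_0a_1}{a_0^2}\,d^{1+1/m}+\dots.
\]
A direct computation shows that the coefficient of $d^{1+1/m}$ vanishes identically precisely when $b_0(x',t)=\lambda(t)a_0(x')$ is of separated form, i.e., when $u$ is a time rescaling of $S$. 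For any generic initial datum this coefficient is nontrivial, so $u^m/S^m$ and hence $u/S$ is exactly $C^{1+1/m}$ and no better. The technical hurdle is making the remainders $o(d^{2+1/m})$ quantitative enough to rule out cancellations with the leading singular term; this should follow from the sharp time-smoothness part of Theorem \ref{eq:mainpme1} combined with tangential regularity of $a_0,b_0$.
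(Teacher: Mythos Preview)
Your reduction of \eqref{eq-optimal-u} to Theorem \ref{eq:mainpme1} via $u/S=(u^m/S^m)^{1/m}$, together with the division lemma and composition with $s\mapsto s^{1/m}$, is exactly the paper's argument for the regularity part.

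For optimality, your strategy of boundary expansion is the right one, and indeed the paper carries it out in Example~\ref{example:1}. But there is a genuine gap in your version. Your claimed characterization ``the coefficient of $d^{1+1/m}$ vanishes precisely when $b_0(x',t)=\lambda(t)a_0(x')$'' is false. The coefficients $a_1,b_1$ are determined from $a_0,b_0$ through the equations, and in one dimension one finds
\[
b_1=\frac{m\,(b_0)_t}{(2m+1)(m+1)\,b_0^{(m-1)/m}},\qquad a_1=-\frac{m^2\,a_0^{1/m}}{(m-1)(2m+1)(m+1)}.
\]
With $b_0=\lambda(t)a_0$ the coefficient $b_1a_0-b_0a_1$ vanishes only when $\lambda$ satisfies the ODE $\lambda'=-\tfrac{m}{m-1}\lambda^{(2m-1)/m}$, i.e., only for the friendly giant family, not for arbitrary separated $\lambda(t)$. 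So the condition on $b_0$ alone is not the right one, and the equivalence you assert between ``$b_0$ of separated form'' and ``$u$ is a time rescaling of $S$'' is also wrong: the former is a boundary condition, the latter a global one.

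More importantly, you never produce a datum for which the coefficient is provably nonzero; ``generic'' is not enough, and neither is the vague plan for controlling remainders. The paper closes this by building a concrete one-dimensional example: it takes $v_0=u_0^m$ equal to $ax+\kappa x^{2+1/m}$ near $x=0$ with $\kappa>0$ chosen \emph{opposite in sign} to the corresponding coefficient of $S^m$. A short-time existence theorem with extra time regularity (so that one can expand $(u^m)_t$ to the same order) then yields the expansion $u^m/S^m=\tfrac{A(t)}{a}\bigl(1+f(t)x^{1+1/m}+\dots\bigr)$ with $f(0)>0$ computed explicitly, hence $f(t)>0$ for $t$ small. That is what pins down the sharpness.
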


It is interesting to notice that, while in the free boundary case ---i.e., when $\Omega=\R^n$ in \eqref{eq:main} and the set $\{u>0\}$ is moving in time--- it was shown in Kienzler-Koch-V\'azquez \cite{KKV} that $u^{m-1}$ and $u^{m-1}/d$ are $C^\infty$ up to the free boundary for large times, in case of Dirichlet conditions in bounded domains it is \emph{not} true that $u^m$ or $u^m/d$ are $C^\infty$ up to the boundary for large times\footnote{Here, $d$ denotes the distance to the boundary, modified inside the domain so that it is $C^\infty(\overline\Omega)$.}.
This is actually false even for the friendly giant solution\footnote{Since $S$ solves \eqref{eq:stationary}, and $S\asymp d^{1/m}$, then the Laplacian of $S^m$ is not $C^{\frac1m+\varepsilon}$, and thus $S^m$ cannot be $C^{2+\frac1m+\varepsilon}$ for any $\varepsilon>0$.}, which satisfies that $u^m$  is $C^{2+\frac1m}$, and not $C^{2+\frac1m+\varepsilon}$  for any $\varepsilon>0$. 
In particular, for the same reasons, we have that $u^m/d$  is always $C^{1+\frac1m}$, however it is not $C^{1+\frac1m+\varepsilon}$ for any $\varepsilon>0$.
One might then wonder if  $u^m/S^m$  could be more regular (say, $C^\infty$), but it turns out to be false, too, as stated in Corollary \ref{cor-u} above.

Thus, these two results completely answer the question of optimal boundary regularity of solutions to the Dirichlet problem for the PME.

\subsection{Long time behavior}

As said before, Aronson-Peletier \cite{AP1981} proved that there exists a waiting time $T^*\ge 0$ for which \eqref{T*} holds.
Concerning the large time behavior $t\to\infty$, they showed that there exists $\tau>0$ such that
\begin{equation}\label{eq:bounds}
u(x,t)\ge (\tau+t)^{-\frac{1}{m-1}}S(x)\quad\mbox{in }\overline\Omega\times(T^*,+\infty),
\end{equation}
where $S$ is given by \eqref{eq:stationary}. Consequently, they proved the stability of the friendly giant solution \eqref{friendly-giant} in the sense that 
\begin{equation}\label{eq:stability}
\left\|\frac{u(\cdot,t)}{U(\cdot,t)}-1\right\|_{L^\infty(\Omega)}\le \frac{C}{t}\quad\mbox{for  }t>T^*,
\end{equation}
where $C>0$ depends only on $n,m,u_0$ and $\Omega$. 
The decay rate in \eqref{eq:stability} is optimal by considering the particular solution $u_s(x,t)=(s+t)^\frac{1}{1-m}S(x)$ for arbitrary $s>0$. 
See {Theorem 1.1} in V\'azquez \cite{Vaz2004} for another similar stability result of the separable solution to the porous medium equation \eqref{eq:main} {with general initial data and general domains,  Theorem 5.8 in Bonforte-Grillo-V\'azquez \cite{BGV} for another proof using a new entropy method, and also Theorems 2.5 and 2.6 in Bonforte-Sire-V\'azquez \cite{BSV15} for more general porous medium type equations.}

Here, thanks to our fine boundary estimates from Theorem \ref{eq:mainpme1}, we can establish finer estimates for the long time behavior of solutions to the PME.

 \begin{thm}\label{eq:mainpme}
Let $\om\subset \R^n$ be any bounded smooth domain, $m>1$, and $u_0\in L^1(\Omega)$ with $u_0\not\equiv0$. 
Let $u$ be the nonnegative weak solution of \eqref{eq:main}, and $T^*$ be the first time for which \eqref{T*} holds. Let $\delta>0$.

Then, there exist constants $A_1\ge 0$, $\tau^*\geq 0$, $C_1>0$ and $\gamma>0$ such that
\begin{equation}\label{eq:decayestimateu}
\left\|t^\frac{m}{m-1}u^m(\cdot,t)-S^m+\frac{A_1 }{t} S^m\right\|_{C^{2+\frac{1}{m}}(\overline\Omega)}\le \frac{C_1}{t^{1+\gamma}} \quad\mbox{for all }t> T^*+\delta
\end{equation}
and
\begin{equation}\label{eq:decayestimateu2}
\left\|\frac{u(\cdot,t)}{U(\cdot,\tau^*+t)}-1 \right\|_{C^{1+\frac{1}{m}}(\overline\Omega)} 
\le \frac{C_1}{t^{1+\gamma}} \quad\mbox{for all }t> T^*+\delta,
\end{equation}
where $S$ and $U$ are given by \eqref{eq:stationary} and \eqref{friendly-giant}, respectively,  $C_1$ depends only on $n,m,\delta,\Omega$ and $u_0$, $A_1$ and $\tau^*$ depend only on $n,m,\Omega$ and $u_0$, while $\gamma$ depends only on $n$, $m$, and $\Omega$.
In particular, 
\[\textrm{in the dimension}\quad n=1 \quad\textrm{we have}\quad \gamma=1.\]

Furthermore, for any $\ell\in\mathbb{N}$, there exists $C_2>0$ depending only on $n,m,\delta,\Omega, \ell$ and $u_0$ such that
\begin{equation}\label{eq:regularityestimateu}
\|\partial_t^\ell (u^m)(\cdot,t)\|_{C^{2+\frac{1}{m}}(\overline\Omega)}\le C_2 t^{-\frac{m}{m-1}-\ell}\quad\mbox{for all }t> T^*+\delta.
\end{equation}
\end{thm}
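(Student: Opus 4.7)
The plan is to pass to self-similar variables: set $\tau = \log t$ and $v(x,\tau) = e^{\tau/(m-1)} u(x,e^\tau)$, so that \eqref{eq:main} becomes
\[
\partial_\tau v = \Delta(v^m) + \frac{1}{m-1}\, v\qquad \text{in } \Omega \times (\log(T^*+\delta),\infty),
\]
with stationary solution $v\equiv S$. In these variables the stability statement \eqref{eq:stability} reads $\|v/S-1\|_{L^\infty(\Omega)}=O(e^{-\tau})$, Theorem \ref{eq:mainpme1} provides a uniform-in-$\tau$ bound on $w:=v^m-S^m$ in $C^{2+1/m}(\overline\Omega)$, and \eqref{eq:decayestimateu} is equivalent to the expansion $w = -A_1\, e^{-\tau}\,S^m + O(e^{-(1+\gamma)\tau})$ in $C^{2+1/m}(\overline\Omega)$. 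A direct computation gives $\partial_\tau w = m\,v^{m-1}\Delta w + \frac{m\,v^{m-1}}{m-1}(v-S)$, whose linearization at $w=0$ is the degenerate elliptic operator
\[
L_0 w := m\,S^{m-1}\Delta w + \frac{1}{m-1}\, w,
\]
so that $\partial_\tau w = L_0 w + N(w)$ with $N(w)$ quadratic in $w$.

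The next step is a spectral analysis of $L_0$. Using \eqref{eq:stationary} one checks directly that $L_0(S^m)=-S^m$, so $\phi_1:=S^m$ is an eigenfunction with eigenvalue $\lambda_1=-1$; this eigenfunction corresponds exactly to the one-parameter family of shifted friendly giants $U(\cdot,\tau^*+t)$. After integration by parts, $L_0$ is symmetric in the weighted Hilbert space $L^2(\Omega, S^{1-m}\,dx)$, with Dirichlet boundary conditions, and standard compactness arguments in this weighted setting yield a discrete spectrum accumulating only at $-\infty$. Since $\phi_1>0$ in $\Omega$, it is the principal eigenfunction, hence all other eigenvalues satisfy $\lambda_k \le \lambda_2 < -1$. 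Define the spectral gap $\gamma := -1-\lambda_2 > 0$; this quantity depends only on $n$, $m$ and $\Omega$, and in dimension $n=1$ an explicit Sturm--Liouville computation (using that $S$ is symmetric on an interval and solves an explicit ODE) gives $\lambda_2=-2$, so $\gamma = 1$.

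Decomposing $w(\cdot,\tau)=c(\tau)\phi_1+w^\perp(\cdot,\tau)$ in the weighted inner product produces the scalar ODE $c'(\tau)=-c(\tau)+O(\|w\|^2)$ and the orthogonal evolution $\partial_\tau w^\perp = L_0 w^\perp + N(w)^\perp$. Because $\|w\|_{L^\infty}=O(e^{-\tau})$, the forcing $N(w)$ is of order $e^{-2\tau}$, so integrating the ODE gives $c(\tau)=-A_1\, e^{-\tau}+O(e^{-(1+\gamma)\tau})$ for a constant $A_1\ge 0$ depending on $u_0$, while the spectral gap yields $\|w^\perp\|_{L^2_S}\le C\, e^{-(1+\gamma)\tau}$ via the standard semigroup estimate $\|e^{\sigma L_0}\Pi^\perp\|\le e^{-(1+\gamma)\sigma}$. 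To upgrade this weighted $L^2$ decay to $C^{2+1/m}$ decay, the remainder $\tilde w := w + A_1\, e^{-\tau}\, S^m$ satisfies a linear parabolic equation whose coefficients involve $v^{m-1}$ and are controlled uniformly in $\tau$ thanks to Theorem \ref{eq:mainpme1}. Applying the Schauder-type estimates developed for the proof of Theorem \ref{eq:mainpme1} (adapted to the $S^{m-1}\sim d^{(m-1)/m}$ degeneracy), combined with the interpolation $\|\tilde w\|_{C^{2+1/m-\eta}}\le C\|\tilde w\|_{L^2}^{\theta}\|\tilde w\|_{C^{2+1/m}}^{1-\theta}$ and the fact that we already know $\tilde w$ is uniformly bounded in $C^{2+1/m}$, converts the weighted $L^2$ decay into the $C^{2+1/m}$ decay \eqref{eq:decayestimateu}.

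Estimate \eqref{eq:decayestimateu2} follows by expanding $U(\cdot,\tau^*+t)/U(\cdot,t)=(1+\tau^*/t)^{-1/(m-1)}=1-\tau^*/((m-1)t)+O(1/t^2)$ and choosing $\tau^*$ so that this first-order Taylor term cancels the $A_1/t$ correction in \eqref{eq:decayestimateu}, then converting the $C^{2+1/m}$ bound on $u^m$ into a $C^{1+1/m}$ bound on $u/U=(u^m/U^m)^{1/m}$ via the fact that $u^m/U^m\in C^{1+1/m}(\overline\Omega)$ (one derivative is lost because both $u^m$ and $U^m$ vanish like $d$ on $\partial\Omega$, exactly as in Corollary \ref{cor-u}). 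Finally, \eqref{eq:regularityestimateu} follows by writing $u^m(x,t)=t^{-m/(m-1)}v^m(x,\log t)$, so that $\partial_t^\ell u^m$ is a polynomial combination of $\partial_\tau^k v^m$ for $k\le\ell$ multiplied by $t^{-m/(m-1)-\ell}$, and inductively differentiating the rescaled equation in $\tau$ together with Theorem \ref{eq:mainpme1} yields uniform $C^{2+1/m}$ bounds on all the $\partial_\tau^k v^m$. The main technical obstacle will be developing the spectral theory and parabolic Schauder estimates for $L_0$ in a functional setting compatible with the $C^{2+1/m}$ regularity from Theorem \ref{eq:mainpme1}, given that the coefficient $mS^{m-1}$ vanishes to order $(m-1)/m$ on $\partial\Omega$; this is precisely where the optimal boundary regularity of the first theorem plays its crucial role, as it provides the uniform high-norm control needed to close the bootstrap from weighted $L^2$ to $C^{2+1/m}$.
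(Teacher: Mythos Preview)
Your approach is essentially the paper's: the change of variables $\theta=t^{m/(m-1)}u^m=v^m$ (so your $w$ is the paper's $h=\theta-\Theta$), spectral decomposition of the same linearized operator in the weighted space $L^2(\Omega,S^{1-m}dx)=L^2(\Omega,\Theta^{p-1}dx)$, ODE analysis of the first mode, spectral-gap decay on the orthogonal part, and Schauder upgrade to $C^{2+1/m}$. Two points need fixing, however.

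First, the interplay between the spectral gap and the quadratic nonlinearity is mishandled. With $N(w)=O(e^{-2\tau})$, integrating $c'=-c+O(e^{-2\tau})$ gives $c(\tau)=-A_1e^{-\tau}+O(e^{-2\tau})$, not $O(e^{-(1+\gamma)\tau})$; likewise the Duhamel bound on $w^\perp$ yields decay $e^{-\min(1+\gamma_{\rm spec},\,2)\tau}$ (with a possible logarithmic loss at resonance), not $e^{-(1+\gamma_{\rm spec})\tau}$. So the $\gamma$ in the theorem is $\min(-1-\lambda_2,\,1)$, not the raw spectral gap. The paper handles this by projecting onto \emph{all} modes with $\mu_j<2p$ (i.e.\ $\lambda_j>-2$) and treating the remainder separately.

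Second, and relatedly, your one-dimensional claim $\lambda_2=-2$ is incorrect. In the paper's normalization the second eigenvalue is $\mu_2=\tfrac{3p}{1-p}$ (Berryman's observation, with eigenfunction $\Theta\Theta'$), which corresponds to $\lambda_2=-\tfrac{3m}{m-1}<-3$. The conclusion $\gamma=1$ in dimension one is correct, but the reason is that the spectral gap \emph{exceeds} $1$, so the quadratic error $O(e^{-2\tau})$ becomes the binding constraint; it is not that $\lambda_2$ happens to equal $-2$.
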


This result improves substantially the best known results  so far for the long time behavior of solutions to the PME.
Indeed, it not only improves Aronson-Peletier's stability result \eqref{eq:stability} from the $L^\infty(\Omega)$ topology to the $C^{1+\frac{1}{m}}(\overline\Omega)$ topology, but also gives a faster rate of convergence  than~\eqref{eq:stability}, which we expect to be optimal. 

Indeed, on the one hand, recall that the $C^{1+\frac{1}{m}}(\overline\Omega)$ regularity of the relative error $u/U$ is optimal; see Corollary~\ref{cor-u}.

On the other hand, as shown in the proof of Theorem \ref{eq:mainpme}, the constant $\gamma$ is determined by the second eigenvalue of the linearized operador of the equation $-\Delta \Theta=\frac{1}{m-1}\Theta^{1/m}$ in $\Omega$, with the zero Dirichlet condition on $\partial\Omega$.
In particular, $\gamma>0$ depends strongly on the domain $\Omega$, and in general we do not expect any lower bound on $\gamma>0$ like the one we prove for the case of dimension $n=1$.

Finally, it is interesting to notice that, as explained in Remark~\ref{rem:4.2}, by adding extra terms (involving higher eigenfunctions of such linearized operator) in the expansion \eqref{eq:decayestimateu}, one could expand to arbitrary orders.

\subsection{Strategy of the proof}

As in many nonlinear PDE problems, in order to establish higher regularity of solutions one would like to use Schauder-type estimates and a bootstrap argument but, for this, some initial regularity is needed.

In our context, the a priori Schauder-type estimates we need were established by Kim-Lee \cite{KimLee} ---for compatible initial data satisfying some nondegeneracy conditions---, using the methods of Daskalopoulos-Hamilton \cite{DH}.

The initial regularity we need in order to use the Schauder-type estimates, however, is  
\be \label{eq:bootstrap-stp}
\mbox{the H\"older continuity of } \frac{u^m}{d} \mbox{ on }\overline\Omega\times(T^*,+\infty).
\ee  
This does not follow from previously known results, which give at best that such quotient is bounded --- recall \eqref{eq-Lip-x}.

We prove \eqref{eq:bootstrap-stp} by using the ideas of a recent work of the first and last authors \cite{JX22} on the fast diffusion equation (corresponding to $m\in (0,1)$ in \eqref{eq:main}), which solved a problem raised by Berryman and Holland \cite{BH} in 1980. 
The rough idea is that we need to develop a De Giorgi iteration for a singular and degenerate nonlinear parabolic equation.
Moreover, while the equation in \cite{JX22} corresponds to the case $m\in (0,1)$, here we need to treat the case $m>1$.

After proving  \eqref{eq:bootstrap-stp}, we establish the all time regularity of solutions with compatible initial data --- by using a boostrap argument and the Schauder estimates from \cite{KimLee} ---, and finally we need an appropriate approximation argument to establish the eventual regularity for solutions with general initial data.

Finally, to prove Theorem \ref{eq:mainpme}, we find an equation for $t^{\frac{m}{m-1}}(u^m-U^m)$ (after a change of the time variable of the form $\tau:=\log t$), and prove that, up to errors that decay faster as $\tau\to\infty$, the solution is well approximated by an expansion involving the eigenfunctions of the linearized operator of the equation $-\Delta \Theta=\frac{1}{m-1}\Theta^{1/m}$ in $\Omega$, with the zero Dirichlet condition on~$\partial\Omega$.
Since the first eigenfunction turns out to be $\Theta$ itself (this gives the constants $A_1$ or $\tau^*$), then we get a rate of convergence which is dictated by the second eigenvalue of such operator, and this gives the constant $\gamma>0$.

\subsection{Related works}

In the setting of uniformly parabolic equations,  boundary estimates of type \eqref{eq:bootstrap-stp}  have been studied in, e.g., Krylov \cite{Kr}, Fabes-Garofalo-Salsa \cite{FGS} and Fabes-Safonov \cite{FabS}. 
For boundary estimates of solutions to certain degenerate or singular nonlinear parabolic equations related to \eqref{eq:main}, we refer to the recent papers  Kuusi-Mingione-Nystr\"om \cite{KMN}, Avelin-Gianazza-Salsa \cite{AGS} and references therein.

In case of the fast diffusion equation ---i.e., \eqref{eq:main} with $m\in(0,1)$--- global regularity in smooth bounded domains has been established through the work of Sacks \cite{Sacks},  DiBenedetto \cite{DiBenedetto}, Chen-DiBenedetto \cite{CDi}, Kwong \cite{KwongY, KwongY2},  DiBenedetto-Kwong-Vespri \cite{DKV}, and finally  the first and last authors \cite{JX19, JX22}. 
Instead of that the solutions of porous medium equation decay in time with the estimate \eqref{eq:stability}, the solutions to fast diffusion equations will extinct in finite time. The extinction profiles and their stability for  fast diffusion equations have been established by Berryman-Holland \cite{BH}, Kwong \cite{KwongY3}, Feireisl-Simondon \cite{FS}, Bonforte-Grillo-V\'azquez \cite{BGV}, Bonforte-Figalli \cite{BFig}, Akagi \cite{Akagi}, and Jin-Xiong \cite{JX19, JX20}. Higher order asymptotics was recently obtained by Choi-McCann-Seis \cite{CMS}.

\subsection{Acknowledgements}
 
T. Jin was partially supported by Hong Kong RGC grants GRF 16306320 and GRF 16303822, and NSFC grant 12122120.
X. Ros-Oton was partially supported by the European Research Council (ERC) under the Grant Agreement No 801867, the AEI project PID2021-125021NA-I00 (Spain), the MINECO grant RED2018-102650-T (Spain), and the Spanish State Research Agency, through the Mar\'ia de Maeztu Program for Centers and Units of Excellence in R\&D (CEX2020-001084-M).
J. Xiong was is partially supported by  the National Key R\&D Program of China No. 2020YFA0712900, and NSFC grants 11922104 and 11631002. All authors would like to thank Matteo Bonforte, Beomjun Choi and Juan Luis V\'azquez for interesting discussions and comments.

\subsection{Organization of the paper}

This paper is organized as follows. 
In Section \ref{sec:holdergradient}, we prove some H\"older estimates for a singular and degenerate nonlinear parabolic equation, which will lead to \eqref{eq:bootstrap-stp}. In Section \ref{sec:alltime}, we prove all time regularity of solutions to \eqref{eq:main} with compatible initial data. In Section \ref{sec:eventual}, we prove the optimal regularity after a waiting time for general initial data $u_0$, as well as fine asymptotics of the solution.

\section{\emph{A priori} H\"older estimates for a singular and degenerate nonlinear parabolic equation}\label{sec:holdergradient}

Let $G$ be a bounded function on $B_1^+$ satisfying  
\be\label{eq:G}
\frac{1}{\Lda} x_n \le G(x)\le \Lda x_n \quad \mbox{for }x\in B_1^+,
\ee
and $A(x)= (a_{ij}(x))_{n\times n}$ be a bounded matrix valued function in $B_1^+$ satisfying
\begin{equation}\label{eq:ellipticity}
\frac{1}{\Lda} |\xi|^2 \le \sum_{i,j=1}^n a_{ij}(x)\xi_i\xi_j \le \Lda |\xi|^2 \quad\forall\,\xi\in\R^n,\ x\in B_1^+, 
\end{equation} 
where $\Lda \ge 1$ is constant.  

Let
\begin{equation}\label{eq:rangeofp}
0<p\le 1.
\end{equation}
We consider positive bounded solutions of 
\begin{equation}\label{eq:main3}
G^{p+1} \partial_t w^p=\mbox{div}(A G^2\nabla w) \quad\mbox{in }B_1^+ \times (-1, 0]
\end{equation} 
satisfying 
\be\label{eq:lo-up}
\overline m\le w\le \overline M \quad \mbox{in }B_1^+  \times (-1, 0]\ \  \mbox{for some }0<\overline m\le \overline M<\infty.
\ee 
The case $p>1$ has been considered in \cite{JX22} by the first and third authors for proving the gradient H\"older estimates of solutions to fast diffusion equations. The range \eqref{eq:rangeofp} is for the purpose of the porous medium equation.

As in \cite{JX22}, the equation \eqref{eq:main3} will be understood in the sense of distribution, and we are interested in the \emph{a priori} H\"older estimates of its solutions that are Lipschitz continuous in $\overline B_1^+\times(-1,0]$. This Lipschitz continuity is assumed only for simplicity to avoid introducing more notations, and it is enough for our purpose. Since the  operator $\mbox{div}(A G^2\nabla\,\cdot )$ is very degenerate near the boundary $\pa B_1^+\cap\{x_n=0\}$, no boundary condition  should be  imposed there; see Keldys \cite{Keldys},  Oleinik-Radkevic \cite{OR} and the more recent paper Wang-Wang-Yin-Zhou \cite{WWYZ}.

The main result of this section is as follows. 

\begin{thm}\label{thm:holdernearboundary} 
Suppose \eqref{eq:G}, \eqref{eq:ellipticity} and \eqref{eq:rangeofp} hold. Suppose $w$ is Lipschitz continuous in $\overline B_1^+\times(-1,0]$, satisfies \eqref{eq:lo-up}, and is a solution of \eqref{eq:main3} in the sense of distribution. Then there exist $\gamma>0$ and $C>0$, both of which depend only on $n$, $p$, $\Lambda$, $\overline m$ and $\overline M$, such that
\[
|w(x,t)-w(y,s)|\le C (|x-y|+|t-s|)^\gamma\quad\forall\,(x,t), (y,s)\in \overline B_{1/2}^+\times(-1/4,0].
\]
\end{thm}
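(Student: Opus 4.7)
The strategy is a De Giorgi iteration adapted to the weighted, doubly-degenerate setting, following the scheme developed in Jin--Xiong \cite{JX22} for the range $p>1$, but with the modifications needed here for $0<p\le 1$. The key observation is that, since $\overline m\le w\le \overline M$, the factor $pw^{p-1}$ arising from $\partial_{t}w^{p}=pw^{p-1}\partial_{t}w$ is bounded above and below by positive constants depending only on $p,\overline m,\overline M$, so \eqref{eq:main3} effectively behaves as a \emph{linear} degenerate parabolic equation with weight $G^{p+1}$ on the time term and weight $G^{2}$ on the spatial term. Both weights vanish on $\{x_{n}=0\}$, so no boundary condition is available or needed there, which is consistent with the Keldys/Fichera setting \cite{Keldys,OR,WWYZ}.

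The plan proceeds in three main steps. The first step is to derive a weighted Caccioppoli inequality by testing the equation against $(w-k)_{\pm}\eta^{2}$ for cutoffs $\eta$ that do \emph{not} need to vanish on $\{x_{n}=0\}$; this yields
\[
\sup_{t}\int G^{p+1}(w-k)_{\pm}^{2}\eta^{2}\,\ud x+\iint G^{2}|\nabla(w-k)_{\pm}|^{2}\eta^{2}\,\ud x\,\ud t\le C\iint\bigl(G^{2}|\nabla\eta|^{2}+G^{p+1}\eta|\partial_{t}\eta|\bigr)(w-k)_{\pm}^{2}\,\ud x\,\ud t.
\]
The second step is a weighted Sobolev embedding of the form $\bigl(\int|f|^{2\kappa}G^{2}\,\ud x\bigr)^{1/\kappa}\le C\int|\nabla f|^{2}G^{2}\,\ud x$, for some $\kappa>1$ depending on $n$ and valid for $f$ compactly supported in a ball contained in $B_{1}$ (including balls touching $\{x_{n}=0\}$). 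Such an inequality can be obtained by identifying $x_{n}^{2}\,\ud x$ with the axisymmetric part of Lebesgue measure on $\mathbb{R}^{n-1}\times\mathbb{R}^{3}$ via the map $(x',x_{n})\mapsto(x',x_{n}e)$, $e\in\mathbb{S}^{2}$. Coupling this with the Caccioppoli inequality and iterating in the Moser/De Giorgi fashion provides an $L^{\infty}$ bound of $(w-k)_{\pm}$ on a smaller cylinder in terms of its weighted $L^{2}$-mean on a larger one.

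The third step is an oscillation-reduction dichotomy. If $\{w>M-\omega/2\}$ occupies only a small (weighted) fraction of a reference cylinder, the De Giorgi first lemma coming from the previous step forces $w\le M-\omega/4$ on a smaller cylinder; the analogous conclusion holds for the sublevel sets of $-w$. Otherwise both super- and sublevel sets are sizable, and a logarithmic energy estimate applied to $\log\bigl(\omega/(M-w+\tau\omega)\bigr)$, combined with the $L^{\infty}$ bound from Step 2 (``expansion of positivity''), shows that $M-w\ge c\omega$ on a smaller cylinder. Either alternative yields $\operatorname{osc}_{Q_{\theta r}}w\le(1-\sigma)\operatorname{osc}_{Q_{r}}w$ with $\theta,\sigma\in(0,1)$ depending only on $n,p,\Lambda,\overline m,\overline M$, and iterating on dyadic cylinders produces the claimed H\"older estimate with exponent $\gamma=\log(1-\sigma)^{-1}/\log\theta^{-1}$.

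The main obstacle I anticipate is the asymmetry between the time weight $G^{p+1}$ and the spatial weight $G^{2}$: for $p<1$ the time weight is strictly smaller near $\{x_{n}=0\}$, which breaks the standard parabolic scaling on cylinders $B_{r}\times(-r^{2},0]$ touching the boundary. This forces one to either choose the shape of the cylinders adaptively (balancing the two weights on the support of the cutoff) or to interpolate between the measures $G^{2}\,\ud x\,\ud t$ and $G^{p+1}\,\ud x\,\ud t$ at each iteration step. A further subtlety is that $x_{n}^{2}$ does not belong to the Muckenhoupt class $A_{2}(\mathbb{R}^{n})$, so the Fabes--Kenig--Serapioni framework cannot be applied off-the-shelf and must be replaced by the cylindrical-embedding argument sketched above; all remaining ingredients are adaptations of the framework of \cite{JX22} to the present range $0<p\le 1$.
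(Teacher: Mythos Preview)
Your overall architecture---weighted Caccioppoli, weighted Sobolev via the cylindrical embedding, De Giorgi iteration to oscillation decay---matches the paper's, and you have correctly identified the main obstacle (the mismatch between the $G^{p+1}$ time weight and the $G^{2}$ spatial weight). A few specific points of comparison are worth noting.

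First, the paper does not leave the scaling open: it fixes the parabolic cylinders as $Q_R=B_R\times(-R^{p+1},0]$ from the outset, which is the natural scaling here, and then handles the weight mismatch by the one-line inequality
\[
\frac{|\tilde A|_{\nu_{2}}}{|Q_R^+|_{\nu_{2}}}\le C\,\frac{|\tilde A|_{\nu_{p+1}}}{|Q_R^+|_{\nu_{p+1}}}\qquad(\tilde A\subset Q_R^+),
\]
valid precisely when $0<p\le 1$. This is your ``interpolate between measures'' made concrete, and it is what allows the Sobolev/energy terms (which live in $\nu_2$) to be fed back into the density conditions (which live in $\nu_{p+1}$).

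Second, your Caccioppoli is stated in the ``effectively linear'' form. The paper instead integrates $\partial_t w^p$ against $(w-k)^\pm$ exactly and obtains a cubic correction: the energy term on the left is $\int (v^2-Cv^3)\eta^2 G^{p+1}$ for $v=(w-k)^+$, and the symmetric correction appears on the right for $(w-k)^-$. This is harmless once one restricts to levels with $|w-k|\le\delta_0$ small, but it is one of the places where $p<1$ genuinely differs from $p>1$, so it is worth being aware of.

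Third, and this is the main methodological difference: for the ``second De Giorgi lemma''/expansion of positivity you propose a logarithmic energy estimate \`a la DiBenedetto. The paper instead uses the weighted isoperimetric inequality of \cite{JX22} (their Proposition~2.4) to show that the $\nu_{p+1}$-measure of $\{w>M-\delta/2^\ell\}$ decays like $\ell^{-p/(1+p)}$, then combines this with a time-propagation lemma (starting from a density condition on a single time slice and pushing it forward through $[t_0,t_0+R^{p+1}]$). Your logarithmic route should also work given the two-sided bounds $\overline m\le w\le\overline M$, but the isoperimetric route has the advantage that it plugs directly into the machinery already built in \cite{JX22}, so only the three items above (Caccioppoli correction, measure comparison, and the decay exponent $p/(1+p)$ in the distribution-function lemma) need to be reworked for $p\le 1$.
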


Throughout this section, we assume all the assumptions in Theorem \ref{thm:holdernearboundary}.  The proof of Theorem \ref{thm:holdernearboundary} will be similar to that of Theorem 3.1 in \cite{JX22} but has needs to be adapted to the case $p\in(0,1)$.
In particular, there are some notable differences, including the following:
\begin{itemize}
\item[(i).] The  Caccioppoli type inequality:  Lemma 3.2 in \cite{JX22} and Lemma \ref{lem:degiorgiclass} in the below.
\item[(ii).] The connection between the measures $\nu_2$ and $\nu_{p+1}$: see (15) in \cite{JX22} and \eqref{eq:connection} in the below.
\item[(iii).] The decay estimate of the distribution function of $w$ along the time: see Lemma 3.5 in \cite{JX22} and Lemma \ref{lem:decay-1} in the below.
\end{itemize}

Note that both Lemma 2.3 (the Sobolev inequality) and Proposition 2.4 (the  De Giorgi type isoperimetric inequality) in \cite{JX22}  holds for all $p>0$.

The  Caccioppoli type inequalities for \eqref{eq:main3} with \eqref{eq:rangeofp} reads as follows.

\begin{lem}\label{lem:degiorgiclass}
Let $k\in [\overline m, \overline M]$ and $\eta$ be a smooth function supported in $B_R(x_0)\times(-1,1)$, where $ B_R(x_0)\subset B_1$. Let $v=(w-k)^+$ and $\tilde v=(w-k)^-$.
Then, for every $-1<t_1\le t_2\le 0$ we have
\begin{equation}\label{eq:caccipoli1}
\begin{split}
&\sup_{t_1<t<t_2} \int_{B_R^+(x_0)} (v^{2}-Cv^3)\eta^2 G^{p+1} \,\ud x  +\int_{B_R^+(x_0)\times(t_1,t_2] } |\nabla (v\eta)|^2 G^2\,\ud x \ud t \\ 
&\le \int_{B_R^+(x_0)} v^{2}\eta^2 G^{p+1} \,\ud x\Big|_{t_1}  +  C \int_{B_R^+(x_0)\times(t_1,t_2] } \Big (|\nabla \eta|^2 G^2+ |\pa_t\eta|\eta G^{p+1}\Big) v^{2}\,\ud x\ud t,
\end{split}
\end{equation} 
and
\begin{equation}\label{eq:caccipoli2}
\begin{split}
&\sup_{t_1<t<t_2} \int_{B_R^+(x_0)} \tilde v^{2}\eta^2 G^{p+1} \,\ud x  +\int_{B_R^+(x_0)\times(t_1,t_2] } |\nabla (\tilde v\eta)|^2 G^2\,\ud x \ud t   \\ 
&\le \int_{B_R^+(x_0)} (\tilde v^{2}+C\tilde v^3)\eta^2 G^{p+1} \,\ud x\Big|_{t_1}   + C \int_{B_R^+(x_0)\times(t_1,t_2] } \Big (|\nabla \eta|^2 G^2+ |\pa_t\eta|\eta G^{p+1}\Big) \tilde v^{2}\,\ud x\ud t,
\end{split}
\end{equation} 
where $C>0$ depends only on $n$, $p$, $\Lambda$, $\overline m$ and $\overline M$.
\end{lem}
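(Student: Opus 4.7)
The plan is the standard Caccioppoli-type computation for the weighted degenerate parabolic equation \eqref{eq:main3}: test against $v\eta^2 = (w-k)_+\eta^2$ to obtain \eqref{eq:caccipoli1} and against $\tilde v\eta^2 = (w-k)_-\eta^2$ to obtain \eqref{eq:caccipoli2}, then integrate over $B_R^+(x_0)\times(t_1,t_2]$. Because $w$ is only a distributional solution, I would first carry out all manipulations on its Steklov time-average $w_h$, whose discrete time-derivative is a bona fide $L^\infty$ function, and pass to the limit $h\to 0^+$ at the end; this step is routine in the present setting. For the spatial integration by parts in $\int \operatorname{div}(AG^2\nabla w)\cdot v\eta^2$, the boundary piece on $\partial B_R(x_0)\cap\{x_n>0\}$ vanishes by the support of $\eta$, while the piece on $\{x_n=0\}$ vanishes because $AG^2$ degenerates quadratically there by \eqref{eq:G} and $\nabla w$ is bounded by Lipschitz continuity. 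Using $\nabla w=\nabla v$ on $\{v>0\}$, the ellipticity \eqref{eq:ellipticity}, and Young's inequality on the mixed term $2AG^2 v\eta\nabla v\cdot\nabla\eta$ produces a positive contribution $c_1\int G^2\eta^2|\nabla v|^2$ plus an absorbable error $C\int G^2 v^2|\nabla\eta|^2$; converting $\eta^2|\nabla v|^2$ into $|\nabla(v\eta)|^2$ modulo another $v^2|\nabla\eta|^2$ error gives the gradient term on the left-hand sides of \eqref{eq:caccipoli1}--\eqref{eq:caccipoli2}.

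For the time derivative I would introduce the non-negative primitives
\[
\Psi(w):=\int_k^w p s^{p-1}(s-k)_+\,\ud s, \qquad \hat\Psi(w):=\int_w^k p s^{p-1}(k-s)_+\,\ud s,
\]
which satisfy $v\,\partial_t w^p=\partial_t\Psi(w)$ and $\tilde v\,\partial_t w^p=-\partial_t\hat\Psi(w)$, so that integrating by parts in $t$ produces the endpoint terms $\int G^{p+1}\eta^2\Psi(w)\big|_{t_1,t_2}$ plus an error $\int G^{p+1}\eta\,\partial_t\eta\cdot\Psi(w)$ (and analogously for $\hat\Psi$). The key estimates for $p\in(0,1]$ are the one-sided bounds
\[
\tfrac{pk^{p-1}}{2}(v^2-Cv^3)\le \Psi(w)\le \tfrac{pk^{p-1}}{2}v^2,\qquad \tfrac{pk^{p-1}}{2}\tilde v^2\le \hat\Psi(w)\le \tfrac{pk^{p-1}}{2}(\tilde v^2+C\tilde v^3),
\]
which follow from the monotonicity $s^{p-1}\ge w^{p-1}$ on $[k,w]$ (resp.\ $s^{p-1}\ge k^{p-1}$ on $[w,k]$) coming from $p-1\le 0$, combined with a Bernoulli/Taylor expansion of $w^{p-1}$ about $k^{p-1}$ that introduces the cubic correction. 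Since $\overline m\le k\le\overline M$ makes $pk^{p-1}/2$ a uniformly bounded positive constant, absorbing it into the overall $C$ and combining with the spatial estimate yields the inequalities after taking the supremum in the terminal time (legitimate because the time-integrated part of the right-hand side is monotone non-decreasing in $t_2$).

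The main technical obstacle, and the reason this does not reduce to a direct citation of Lemma~3.2 of \cite{JX22}, is precisely this sign asymmetry in the range $p\in(0,1]$: the map $s\mapsto s^{p-1}$ is now non-increasing (opposite to the case $p>1$ treated there), so the cubic coefficient $p(p-1)k^{p-2}$ in the Taylor expansion of $\Psi$ about $s=k$ flips sign. This forces the ``$-Cv^3$'' correction to appear on the \emph{left}-hand side of \eqref{eq:caccipoli1} and the ``$+C\tilde v^3$'' correction to appear on the \emph{right}-hand side of \eqref{eq:caccipoli2}, rather than the other way around; everything else is a routine weighted Caccioppoli computation.
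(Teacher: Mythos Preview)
Your proposal is correct and follows essentially the same approach as the paper: the authors' proof consists precisely of stating the two-sided bounds on the primitive (their display \eqref{eq:nonlinear-term}, which is your $\Psi(w)/p$ and $\hat\Psi(w)/p$ written in closed form) and then citing Lemma~3.1 of \cite{JX22} for the remainder of the Caccioppoli computation, which you have spelled out in full. Your identification of the sign flip in the cubic correction as the sole substantive change when passing from $p>1$ to $p\in(0,1]$ matches exactly what the paper highlights.
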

\begin{proof}
Since $0<p<1$, then we have
\be  \label{eq:nonlinear-term}
\begin{split}
\frac{1}{2}k^{p-1}v^2-C_0v^3&\le \frac{(v+k)^{p+1}}{p+1}-\frac{k(v+k)^p}{p}+\frac{k^{p+1}}{p(p+1)} \le \frac{1}{2}k^{p-1}v^2 \\
\frac{1}{2} k^{p-1}\tilde v^2&\le \frac{(k-\tilde v)^{p+1}}{p+1}-\frac{k(k-\tilde v)^p}{p}+\frac{k^{p+1}}{p(p+1)}\le \frac{1}{2} k^{p-1}\tilde v^2 +C_0 \tilde v^3,
\end{split}
\ee
for some $C_0>0$ depending only on $\overline m,\overline M, p$. With this change, the left proof will be the same as that of Lemma 3.1 of \cite{JX22}.
\end{proof}

For $x_0\in\partial \R^n_+$ and $R>0$, let 
$$Q_R(x_0,t_0):=B_R(x_0)  \times (t_0-R^{p+1}, t_0], \qquad \quad Q_R^+(x_0,t_0):=B_R^+(x_0)  \times (t_0-R^{p+1}, t_0].$$ We simply write them as $Q_R$ and $Q_R^+$ if $(x_0,t_0)=(0,0)$.

For $q>0$, let  $$\ud \mu_{q}= G^q \,\ud x,\quad \ud \nu_{q}= G^q \,\ud x\ud t$$  and 
\[
|A|_{\mu_q}= \int_{A} G^q \,\ud x\ \ \mbox{for }A\subset B_1^+, \qquad \quad  |\tilde A|_{\nu_q}=\int_{\tilde A} G^q \,\ud x\ud t\ \ \mbox{for }\widetilde A\subset Q_1^+.
\]
Then for  $\tilde A\subset Q_R^+$, since $0<p\le 1$ and $G$ satisfies \eqref{eq:G}, we have
\be \label{eq:connection}
\frac{|\tilde A|_{\nu_{2}}}{|Q_R^+|_{\nu_{2}}}\le   \frac{C R^{1-p}|\tilde A|_{\nu_{p+1}}}{R^{n+p+3}} = C \frac{|\tilde A|_{\nu_{p+1}}}{|Q_R^+|_{\nu_{p+1}}},
\ee
where $C = C(n,\Lambda,p)>0$ depends only on $n,p$ and $\Lambda$. As mentioned earlier, the inequality \eqref{eq:connection} is the second main change.

Given Lemma \ref{lem:degiorgiclass} and \eqref{eq:connection}, the proofs of the following lemma will be the same as that of Lemma 3.3 and Lemma 3.4 in \cite{JX22}. We omit the details.

\begin{lem}\label{lem:smallonlargeset} 
Let $0<R<1$ and
\[
\overline m\le m\le \inf_{Q_R^+} w\le \sup_{Q_R^+} w\le M\le \overline M.
\]
There exist  $0<\gamma_0<1$ and $0<\delta_0<1$, both of which depend only on $n$, $p$, $\Lambda$, $\overline m$ and $\overline M$, such that 

\begin{itemize}
\item[(i).] for  every $0<\delta \le \delta_0$, if 
\[
\frac{|\{(x,t)\in Q_R^+: w(x,t)>M-\delta\} |_{\nu_{p+1}}}{|Q_R^+|_{\nu_{p+1}}} \le \gamma_0,
\]
then
\[
w\le M-\frac{\delta}{2}\quad\mbox{in }Q_{R/2}^+.
\]

\item[(ii).] for  every $\delta >0$,  if 
\[
\frac{|\{(x,t)\in Q_R^+: w(x,t)<m+\delta\} |_{\nu_{p+1}}}{|Q_R^+|_{\nu_{p+1}}} \le \gamma_0,
\]
then
\[
w\ge m+\frac{\delta}{2}\quad\mbox{in }Q_{R/2}.
\]
\end{itemize}
\end{lem}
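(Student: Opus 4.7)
The plan is to run a standard De Giorgi iteration on level sets, suitably weighted by $G^{p+1}$ and $G^2$, following the scheme of Lemmas~3.3--3.4 in \cite{JX22} with the three modifications flagged at the beginning of this section. For part (i), I would choose levels $k_j := M - \delta/2 - \delta\, 2^{-j-1}$ and radii $R_j := R/2 + R\, 2^{-j-1}$, write $v_j := (w - k_j)^+$ and $Q_j := Q_{R_j}^+$, and pick cutoffs $\eta_j(x,t) = \varphi_j(x)\psi_j(t)$ with $\varphi_j \equiv 1$ on $B_{R_{j+1}}$, supported in $B_{R_j}$, and $\psi_j$ a time cutoff equal to $1$ on the time interval of $Q_{j+1}$ and vanishing at the bottom edge of $Q_j$, so that the initial-time term in \eqref{eq:caccipoli1} drops out.

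The first genuinely new step is to absorb the bad cubic $-Cv_j^3$ appearing on the left-hand side of \eqref{eq:caccipoli1} into the $v_j^2$ term: since $0 \le v_j \le \delta \le \delta_0$, choosing $\delta_0$ small depending only on $\overline m, \overline M, p$ gives $v_j^2 - C v_j^3 \ge \tfrac12 v_j^2$, and this is precisely where the restriction $\delta\le \delta_0$ in (i) enters. Combining the resulting energy inequality with the weighted Sobolev inequality (Lemma~2.3 of \cite{JX22}, valid for all $p>0$) and H\"older's inequality produces the usual gain of integrability, and inserting $v_{j+1} \ge \delta\, 2^{-j-2}$ on $\{w > k_{j+1}\}$ one extracts a nonlinear recursion
\[
Y_{j+1} \le C\, b^{\,j}\, Y_j^{1+\kappa}, \qquad Y_j := \frac{|\{w > k_j\} \cap Q_j|_{\nu_{p+1}}}{|Q_j|_{\nu_{p+1}}},
\]
for some $\kappa, b > 0$; if $Y_0 \le \gamma_0$ with $\gamma_0$ small, then $Y_j \to 0$, yielding $w \le M - \delta/2$ a.e.\ in $Q_{R/2}^+$.

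Part (ii) proceeds symmetrically with $\tilde v_j := (w - k_j)^-$, levels $k_j := m + \delta/2 + \delta\, 2^{-j-1}$, and \eqref{eq:caccipoli2} in place of \eqref{eq:caccipoli1}. The key asymmetry is that the bad cubic $+C \tilde v_j^3$ now lives only in the initial-time integrand on the right-hand side; since $\psi_j$ vanishes at the bottom of $Q_j$, this entire initial contribution disappears and no smallness of $\delta$ is needed. I expect the main obstacle to be the bookkeeping dictated by item~(ii) of the list, namely the passage between the $\nu_{p+1}$-measure (in which the hypothesis and the natural Caccioppoli energy are stated) and the $\nu_2$-measure (which arises from the $G^2$ weight on gradients once Sobolev is applied). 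The inequality \eqref{eq:connection} makes this conversion essentially free on a fixed scale, but one must verify that the radius $R$ does not enter the constants $b$ and $C$ in the recursion in a way that degenerates as $j \to \infty$; once this is handled the remainder is the standard De Giorgi machinery and runs identically to \cite{JX22}.
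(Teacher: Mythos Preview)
Your proposal is correct and matches the paper's approach exactly: the paper simply states that, given Lemma~\ref{lem:degiorgiclass} and \eqref{eq:connection}, the proof is the same as that of Lemmas~3.3--3.4 in \cite{JX22}, and omits the details. You have accurately reconstructed those details, including the precise reason $\delta_0$ is needed in (i) (to absorb the $-Cv_j^3$ term on the left of \eqref{eq:caccipoli1}) but not in (ii) (the $+C\tilde v_j^3$ term in \eqref{eq:caccipoli2} sits only in the initial-time integral, which the time cutoff kills).
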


The last main change from the De Giorgi iteration for $p>1$ lies in  the following lemma.

\begin{lem}\label{lem:decay-1} Let $0<R<\frac12$, $0<a\le 1$, $-\frac12<t_0\le -aR^{p+1}$, $0<\sigma<1$, $\delta>0$, and $\overline M\ge M_a\ge \sup_{B_{2R}^+ \times [t_0, t_0+aR^{p+1}] } w$.  There exists $\delta_0>0$ depending only on $n$, $p$, $\Lambda$, $\overline m$ and $\overline M$ such that for  every $0<\delta \le \delta_0$, if 
\[
|\{x\in B_R^+: w(x,t)>M_a-\delta\}|_{\mu_{p+1}}\le (1-\sigma) |B_R^+|_{\mu_{p+1}} \quad \mbox{for any } t_0\le t\le  t_0+aR^{p+1} ,
\]
then 
\[
\frac{|\{(x,t)\in B_{R}^+ \times [t_0, t_0+aR^{p+1}] : w(x,t)>M_a-\frac{\delta}{2^\ell}\}|_{\nu_{p+1}}}{|B_{R}^+ \times [t_0, t_0+aR^{p+1}]|_{\nu_{p+1}}}\le \frac{C}{\sigma a^\frac{1}{1+p} \ell^{\frac{p}{1+p}}}\quad\forall\,\ell\in\mathbb{Z}^+,
\]
where $C$ depends only on $n$, $p$, $\Lambda$, $\overline m$ and $\overline M$.
\end{lem}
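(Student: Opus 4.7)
The plan is a De Giorgi-type level-set iteration on the geometrically shrinking thresholds $k_j := M_a - \delta/2^j$, mirroring the strategy of Lemma 3.5 of \cite{JX22} but using the Caccioppoli bound \eqref{eq:caccipoli1} and the measure comparison \eqref{eq:connection} adapted to the range $0<p\le 1$. I would write $v_j=(w-k_j)^+$ and $A_j(t):=\{x\in B_R^+:w(x,t)>k_j\}$. Since $\delta/2^j\le \delta\le \delta_0$, the hypothesis (together with monotonicity of the distribution function in the level) gives $|A_j(t)|_{\mu_{p+1}}\le (1-\sigma)|B_R^+|_{\mu_{p+1}}$ for every $t\in[t_0,t_0+aR^{p+1}]$ and every integer $j\ge 0$, so in particular $|B_R^+\setminus A_j(t)|_{\mu_{p+1}}\ge \sigma|B_R^+|_{\mu_{p+1}}$.

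Next I would apply the weighted De Giorgi isoperimetric inequality (Proposition 2.4 in \cite{JX22}) to $w(\cdot,t)$ at each fixed $t$, between the levels $k_j$ and $k_{j+1}$. The lower bound on $|B_R^+\setminus A_j(t)|_{\mu_{p+1}}$ produces the factor $\sigma^{-1}$ and yields an estimate of the schematic form
\[
\frac{\delta}{2^{j+1}}\,|A_{j+1}(t)|_{\mu_{p+1}}\le \frac{CR}{\sigma}\int_{A_j(t)\setminus A_{j+1}(t)} |\nabla w(\cdot,t)|\,\ud\mu.
\]
Integrating over $[t_0,t_0+aR^{p+1}]$, applying Cauchy--Schwarz, and invoking \eqref{eq:connection} to convert the resulting weighted measure into $\nu_{p+1}$, I obtain with $Q:=B_R^+\times[t_0,t_0+aR^{p+1}]$
\[
\frac{\delta}{2^{j+1}}\,|\{w>k_{j+1}\}\cap Q|_{\nu_{p+1}}
\le \frac{CR}{\sigma}\left(\int_Q |\nabla v_{j+1}|^2 G^2\right)^{1/2}\!\bigl(|\{k_j<w\le k_{j+1}\}\cap Q|_{\nu_{p+1}}\bigr)^{1/2}.
\]
The gradient integral is controlled by the Caccioppoli inequality \eqref{eq:caccipoli1} applied to $v_{j+1}$ with a standard cutoff on a slightly enlarged parabolic cylinder; the unwanted cubic term $Cv^3$ in \eqref{eq:caccipoli1} is absorbed using $v_{j+1}\le \delta/2^{j+1}\le\delta_0$ (this is the role of the smallness $\delta\le\delta_0$). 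The result is
\[
\int_Q |\nabla v_{j+1}|^2 G^2\le C\Bigl(\tfrac{\delta}{2^{j+1}}\Bigr)^2\left[\frac{|Q|_{\nu_2}}{R^2}+\frac{|Q|_{\nu_{p+1}}}{aR^{p+1}}\right].
\]

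Setting $b_j:=|\{w>k_j\}\cap Q|_{\nu_{p+1}}/|Q|_{\nu_{p+1}}$, substituting the Caccioppoli bound, cancelling a factor of $\delta/2^{j+1}$, squaring, and again using \eqref{eq:connection} to express the geometric prefactors in terms of $|Q|_{\nu_{p+1}}$ alone, the inequalities combine into a recursion of the form
\[
b_{j+1}^{(p+1)/p}\le \frac{C}{\sigma^2 a}\,(b_j-b_{j+1}).
\]
Since $\{b_j\}$ is nonincreasing, one has $b_\ell\le b_{j+1}$ for $0\le j\le \ell-1$, and summing in $j$ telescopes the right-hand side:
\[
\ell\, b_\ell^{(p+1)/p}\le \frac{C}{\sigma^2 a},\qquad \text{hence}\qquad b_\ell\le \frac{C}{\sigma^{2p/(p+1)} a^{p/(p+1)}\ell^{p/(p+1)}}.
\]
Since $0<p\le 1$ gives $2p/(p+1)\le 1$, one has $\sigma^{-2p/(p+1)}\le \sigma^{-1}$ (with $\sigma\le 1$), matching the stated bound.

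The main obstacle I expect is bookkeeping: tracking the precise powers of $R$, $a$, $\sigma$ and $|B_R^+|_{\mu_{p+1}}$ through the weighted isoperimetric inequality and the Caccioppoli estimate so that the exponent in the recursion comes out to $(p+1)/p$, which is exactly what produces the decay rate $\ell^{-p/(p+1)}$ claimed. This is also precisely where the range $0<p\le 1$ enters: the comparison \eqref{eq:connection} goes in the opposite direction from the case $p>1$ in \cite{JX22}, which forces a different balancing of the two terms in the Caccioppoli output and yields the exponent $p/(p+1)$ in place of the one obtained there.
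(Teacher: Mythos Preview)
Your overall strategy---weighted De Giorgi isoperimetric inequality at each time slice, Caccioppoli bound on the energy, then telescoping the recursion on $b_j$---is exactly the paper's. The gap is in the measure conversion that produces the recursion exponent $(p+1)/p$, which you yourself flag as ``bookkeeping'' but then handle incorrectly. Proposition~2.4 of \cite{JX22}, as quoted in the paper, outputs a factor $|A_j(t)\setminus A_{j+1}(t)|_{\mu_{2p}}^{1/2}$ (weight $G^{2p}$), not a $\nu_2$ or $\nu_{p+1}$ term. You cannot convert this to $|A|_{\nu_{p+1}}^{1/2}$ via \eqref{eq:connection}: that inequality compares $\nu_2$ with $\nu_{p+1}$, whereas here the obstruction is that $x_n^{2p}/x_n^{p+1}=x_n^{p-1}$ is unbounded on $B_R^+$ when $p<1$, so there is no pointwise upper bound $|A|_{\mu_{2p}}\le C|A|_{\mu_{p+1}}$. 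Consequently your displayed estimate with the factor $|\{k_j<w\le k_{j+1}\}\cap Q|_{\nu_{p+1}}^{1/2}$ is not justified, and the jump from ``squaring'' (which would give exponent $2$) to the claimed recursion exponent $(p+1)/p$ is left unexplained.

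The paper's actual step is a H\"older inequality (valid precisely because $2p\le p+1$ when $p\le 1$):
\[
|A|_{\mu_{2p}}^{1/2}\le C\,R^{\frac{n(1-p)}{2(1+p)}}\,|A|_{\mu_{p+1}}^{\frac{p}{1+p}},
\]
applied at each time slice before integrating in $t$ (with a further H\"older in time). This is what produces the exponent $p/(1+p)$ on the measure difference and hence the recursion $|A(k_{j+1},R)|_{\nu_{p+1}}^{(1+p)/p}\le C\sigma^{-(1+p)/p}R^{(n+2p+2)/p}\,|A(k_j,R)\setminus A(k_{j+1},R)|_{\nu_{p+1}}$. The relation \eqref{eq:connection} plays no role in the proof of this lemma; it is used elsewhere (e.g.\ in Lemma~\ref{lem:decay-2}). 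Once you replace your Cauchy--Schwarz/\eqref{eq:connection} step by this H\"older inequality, the rest of your outline (Caccioppoli with a spatial cutoff in $B_{2R}$, telescoping sum, and the final estimate on $b_\ell$) goes through exactly as you describe.
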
 
\begin{proof} 
Let 
\[
A(k,R;t)= B_R^+ \cap \{w(\cdot, t)>k\}, \quad A(k,R)= B_{R}^+ \times [t_0, t_0+aR^{p+1}]  \cap \{w>k\}
\]
and 
\[
k_j= M_a- \frac{\delta}{2^j}. 
\]
By Proposition 2.4 of \cite{JX22}, we have 
\begin{align*}
&(k_{j+1}-k_j) |A(k_{j+1},R;t)|_{\mu_{p+1}} |B_R^+\setminus A(k_{j},R;t)|_{\mu_{p+1}}\\
& \le C R^{n+p+2} \left(\int_{A(k_{j},R;t) \setminus A(k_{j+1},R;t)} |\nabla w|^2 x_n^2\right)^{1/2} |A(k_{j},R;t) \setminus A(k_{j+1},R;t)|_{\mu_{2p}}^{1/2}\\
& \le C R^{n+p+2} \left(\int_{B_R^+} |\nabla  (w-k_j)^+|^2 x_n^2\right)^{1/2} |A(k_{j},R;t) \setminus A(k_{j+1},R;t)|_{\mu_{2p}}^{1/2}. 
\end{align*}
By the assumption, 
\[
|B_R^+\setminus A(k_{j},R;t)|_{\mu_{p+1}} \ge \sigma |B_R^+|_{\mu_{p+1}}= C(n,p) \sigma R^{n+p+1}. 
\]
By H\"older's inequality, we also have 
\[
|A(k_{j},R;t) \setminus A(k_{j+1},R;t)|_{\mu_{2p}}^{1/2} \le C R^{\frac{n(1-p)}{2(1+p)}} |A(k_{j},R;t) \setminus A(k_{j+1},R;t)|_{\mu_{p+1}}^{\frac{p}{1+p}} . 
\]
Integrating in the time variable, we have
\begin{align*}
&\int_{t_0}^{t_0+a R^{p+1}} |A(k_{j+1},R;t)|_{\mu_{p+1}} \,\ud t\\&
\le \frac{C2^{j+1}}{\delta \sigma} R^{1+\frac{n(1-p)}{2(1+p)}} \\
&\quad \cdot\int_{t_0}^{t_0+a R^{p+1}} \left[|A(k_{j},R) \setminus A(k_{j+1},R)|_{\mu_{p+1}}^{\frac{p}{1+p}}\left(\int_{B_R^+} |\nabla (w-k_j)^+|^2 x_n^2\,\ud x\right)^{1/2}  \right]\ud t\\
&\le \frac{C2^{j+1}}{\delta \sigma} R^{1+\frac{n(1-p)}{2(1+p)}+\frac{1-p}{2}}  \\
&\quad\cdot |A(k_{j},R) \setminus A(k_{j+1},R)|_{\nu_{p+1}}^{\frac{p}{1+p}} \left(\int_{B_R^+\times [t_0, t_0+aR^{p+1}]  } |\nabla (w-k_j)^+|^2 x_n^2\,\ud x \ud t\right)^{1/2},
\end{align*}
where we used H\"older's inequality in the second inequality. 

There exists $\delta_0>0$ depending only on $n$, $p$, $\Lambda$, $\overline M$ and $\overline m$ such that for $0<\delta<\delta_0$ and $ v= (w- k_j)^-$, there holds
\[
 v^2-C v^3\ge \frac{1}{2}  v^2,
\]
where the constant $C$ in the above is the one in \eqref{eq:caccipoli2}. Let $\eta(x) $ be a smooth cut-off function satisfying 
\begin{equation}\label{eq:testfunction}
\begin{split}
&\mbox{supp}(\eta) \subset B_{2R}, \quad 0\le \eta \le 1, \quad \eta=1 \mbox{ in }B_{R}, \quad |\nabla \eta(x)|^2  \le \frac{C(n)}{R^2} \quad \mbox{in }B_{2R}. 
\end{split}
\end{equation}
It follows from \eqref{eq:caccipoli1}  that
\begin{align*}
&\int_{t_0}^{t_0+a R^{p+1}}\int_{B_R^+  } |\nabla (w-k_j)^+|^2 x_n^2\,\ud x \ud t \\&
\le C \left(\int_{B_{2R}^+} |(w-k_j)^+(t_0)|^2 x_n^{p+1} \,\ud x+\frac{1}{R^2}\int_{t_0}^{t_0+a R^{p+1}}\int_{B_{2R}^+  } x_n^2 |(w-k_j)^+|^{2} \,\ud x\ud t \right )\\&
\le \frac{C \delta^2}{ 4^j} R^{n+p+1}. 
\end{align*}
Hence, 
\[
 |A(k_{j+1},R)|_{\nu_{p+1}} \le \frac{C}{\sigma} R^{\frac{n+2p+2}{p+1}} |A(k_{j},R) \setminus A(k_{j+1},R)|_{\nu_{p+1}}^{\frac{p}{1+p}}
\]
or 
\[
 |A(k_{j+1},R)|_{\nu_{p+1}}^{\frac{1+p}{p}} \le \frac{C}{\sigma^\frac{1+p}{p}} R^{\frac{n+2p+2}{p}} |A(k_{j},R) \setminus A(k_{j+1},R)|_{\nu_{p+1}}. 
\]
Taking a summation, we have 
\begin{align*}
\ell |A(k_{\ell},R)|_{\nu_{p+1}}^{\frac{1+p}{p}} \le \sum_{j=0}^{\ell-1}  |A(k_{j+1},R)|_{\nu_{p+1}}^{\frac{1+p}{p}} &\le  \frac{C}{\sigma^{\frac{1+p}{p}}} R^{\frac{n+2p+2}{p}} |B_{R}^+ \times [t_0, t_0+aR^{p+1}]|_{\nu_{p+1}} \\& \le  \frac{C}{a^\frac{1}{p}\sigma^\frac{1+p}{p}}  |B_{R}^+ \times [t_0, t_0+aR^{p+1}]|_{\nu_{p+1}}^\frac{1+p}{p}. 
\end{align*}
The lemma follows. 
\end{proof}

Similarly,

\begin{lem}\label{lem:decay-1'} Let $0<R<\frac12$,  $0<a\le 1$, $-\frac12<t_0\le -aR^{p+1}$,  $0<\sigma<1$, $\delta>0$ and $\overline m\le m_a\le \inf_{B_{2R}^+ \times [t_0, t_0+aR^{p+1}] } w$. If
\[
|\{x\in B_R^+: w(x,t)<m_a+\delta\}|_{\mu_{p+1}}\le (1-\sigma) |B_R^+|_{\mu_{p+1}} \quad \mbox{for any } t_0\le t\le  t_0+aR^{p+1} ,
\]
then 
\[
\frac{|\{(x,t)\in B_{R}^+ \times [t_0, t_0+aR^{p+1}] : w(x,t)<m_a+\frac{\delta}{2^\ell}\}|_{\nu_{p+1}}}{|B_{R}^+ \times [t_0, t_0+aR^{p+1}]|_{\nu_{p+1}}}\le \frac{C}{\sigma a^\frac{1}{1+p} \ell^{\frac{p}{1+p}}} \quad\forall\,\ell\in\mathbb{Z}^+,
\]
where $C$ depends only on $n$, $p$, $\Lambda$, $\overline m$ and $\overline M$.
\end{lem}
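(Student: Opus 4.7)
The plan is to run exactly the same De Giorgi style iteration as in Lemma \ref{lem:decay-1}, but reflected to track the lower level sets of $w$ instead of its upper level sets. Concretely, I would set
\[
A(k,R;t)=B_R^+\cap\{w(\cdot,t)<k\},\qquad A(k,R)=\bigl(B_R^+\times[t_0,t_0+aR^{p+1}]\bigr)\cap\{w<k\},
\]
and choose the increasing sequence of truncation levels $k_j:=m_a+\delta/2^j$, so that $A(k_{j+1},R)\subset A(k_j,R)$ and the hypothesis becomes
\[
|B_R^+\setminus A(k_j,R;t)|_{\mu_{p+1}}\ge \sigma|B_R^+|_{\mu_{p+1}}\quad\text{for every }t\in[t_0,t_0+aR^{p+1}].
\]
Applying Proposition 2.4 of \cite{JX22} to the function $(w-k_j)^-$ (which is admissible since this isoperimetric inequality is symmetric between upper and lower truncations and holds for all $p>0$) and then using H\"older's inequality in the measure $\mu_{2p}$ exactly as in the proof of Lemma \ref{lem:decay-1}, one obtains, after integrating in $t$,
\[
\int_{t_0}^{t_0+aR^{p+1}} |A(k_{j+1},R;t)|_{\mu_{p+1}}\,\ud t
\le \frac{C2^{j+1}}{\delta\sigma}R^{1+\frac{n(1-p)}{2(1+p)}+\frac{1-p}{2}}\,
|A(k_j,R)\setminus A(k_{j+1},R)|_{\nu_{p+1}}^{\frac{p}{1+p}}\mathcal{E}_j^{1/2},
\]
where $\mathcal{E}_j:=\int_{B_R^+\times[t_0,t_0+aR^{p+1}]}|\nabla(w-k_j)^-|^2 x_n^2\,\ud x\,\ud t$.

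The key point where the argument differs slightly from Lemma \ref{lem:decay-1} is the Caccioppoli estimate for $\mathcal E_j$: here I must invoke \eqref{eq:caccipoli2} instead of \eqref{eq:caccipoli1}. The only asymmetry between the two inequalities is the extra term $C\tilde v^3$ on the right hand side, with $\tilde v=(w-k_j)^-\le \delta$. Choosing $\delta_0>0$ small enough (depending only on $n,p,\Lambda,\overline m,\overline M$) that $C\delta_0\le 1$, we can absorb it as $\tilde v^2+C\tilde v^3\le 2\tilde v^2$, so that with the standard cutoff $\eta$ from \eqref{eq:testfunction} we still obtain
\[
\mathcal{E}_j \le \frac{C\delta^2}{4^j}R^{n+p+1},
\]
exactly the same bound as in the upper case. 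Plugging this into the displayed inequality and using $k_{j+1}-k_j=\delta/2^{j+1}$ yields
\[
|A(k_{j+1},R)|_{\nu_{p+1}}^{\frac{1+p}{p}}
\le \frac{C}{\sigma^{\frac{1+p}{p}}}R^{\frac{n+2p+2}{p}}\,|A(k_j,R)\setminus A(k_{j+1},R)|_{\nu_{p+1}}.
\]

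Finally, the nesting $A(k_{\ell},R)\subset A(k_{j+1},R)$ for $j+1\le \ell$ together with the telescoping sum $\sum_{j=0}^{\ell-1}|A(k_j,R)\setminus A(k_{j+1},R)|_{\nu_{p+1}}\le |B_R^+\times[t_0,t_0+aR^{p+1}]|_{\nu_{p+1}}$ gives
\[
\ell\,|A(k_\ell,R)|_{\nu_{p+1}}^{\frac{1+p}{p}}
\le \frac{C}{a^{1/p}\sigma^{\frac{1+p}{p}}}\,|B_R^+\times[t_0,t_0+aR^{p+1}]|_{\nu_{p+1}}^{\frac{1+p}{p}},
\]
and the lemma follows by raising both sides to the power $p/(1+p)$. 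The only real obstacle, and the reason the statement is split from Lemma \ref{lem:decay-1}, is tracking this $C\tilde v^3$ term in \eqref{eq:caccipoli2}; the smallness assumption $\delta\le\delta_0$ is precisely what makes this harmless, and with that in place the argument is formally identical to the one already carried out.
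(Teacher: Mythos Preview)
Your argument is essentially correct and mirrors the paper's (which simply says ``Similarly''), but there is one point that needs correcting: the statement of Lemma~\ref{lem:decay-1'} carries \emph{no} smallness restriction on $\delta$ (compare with Lemma~\ref{lem:decay-1}, which explicitly requires $0<\delta\le\delta_0$). By introducing such a restriction yourself, you prove only a weaker statement than what is claimed.

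The point is that no smallness of $\delta$ is actually needed here, and this is precisely the asymmetry between \eqref{eq:caccipoli1} and \eqref{eq:caccipoli2}. In Lemma~\ref{lem:decay-1} the cubic correction $-Cv^3$ sits on the \emph{left} side of \eqref{eq:caccipoli1}, so one must ensure $v^2-Cv^3\ge 0$ before dropping the sup term; that forces $v\le\delta_0$. In the present lemma the correction $+C\tilde v^3$ sits on the \emph{right} side of \eqref{eq:caccipoli2}, at the initial time only, so there is nothing to absorb on the left. Moreover $\tilde v=(w-k_j)^-\le k_j-m_a=\delta/2^j$, and the hypothesis is vacuous unless $m_a+\delta\le\overline M$ (otherwise $\{w<m_a+\delta\}=B_R^+$), so in any case $\tilde v\le \overline M-\overline m$. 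Hence
\[
\tilde v^2+C\tilde v^3\le \bigl(1+C(\overline M-\overline m)\bigr)\tilde v^2,
\]
and the extra factor is simply absorbed into the constant $C$, which is allowed to depend on $\overline m,\overline M$. With this observation your energy bound $\mathcal E_j\le C\delta^2 4^{-j}R^{n+p+1}$ holds for every $\delta>0$, and the rest of your proof goes through unchanged to give the lemma as stated. So the ``real obstacle'' you identify is in fact not an obstacle at all in the lower case; this is why Lemma~\ref{lem:decay-1'} is stated without a $\delta_0$.
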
  

Now we can estimate  the distribution function of $w$ at each time slice based on the starting time.

\begin{lem}\label{lem:decay-2} Let $0<R<\frac12$, $-\frac12<t_0\le -R^{p+1}$, $\overline M\ge M_1\ge \sup_{B_{2R}^+ \times [t_0, t_0+R^{p+1}] } w$ and $0<\sigma<1$. There exist constants $\delta_0>0$ and $s_0>1$ depending only on $n$, $p$, $\Lambda$, $\overline m$, $\overline M$ and $\sigma$  such that if $0<\delta<\delta_0$ and 
\[
|\{x\in B_R^+: w(x,t_0)>M_1-\delta\}|_{\mu_{p+1}}\le (1-\sigma) |B_R^+|_{\mu_{p+1}}, 
\]
then
\[
|\{x\in B_R^+: w(x,t)>M_1-\frac{\delta}{2^{s_0}}\}|_{\mu_{p+1}}\le (1-\frac{\sigma}{2}) |B_R^+|_{\mu_{p+1}} \quad \mbox{for every } t_0\le t\le  t_0+R^{p+1} . 
\] 
\end{lem}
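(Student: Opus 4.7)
The plan is to use the Caccioppoli inequality \eqref{eq:caccipoli1} from Lemma \ref{lem:degiorgiclass} to propagate the distribution-function bound at $t_0$ forward in time. The strategy mirrors the analogous step for $p>1$ in \cite{JX22}: an energy argument based on truncations of $w$ at levels $k_s=M_1-\delta/2^s$, combined with a contradiction via a lower bound coming from the assumed failure of the conclusion.

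Concretely, set $v_s=(w-k_s)_+$ with $k_s=M_1-\delta/2^s$; since $w\le M_1$ on $B_{2R}^+\times[t_0,t_0+R^{p+1}]$, we have $0\le v_s\le \delta/2^s$ uniformly. Choose a time-independent cutoff $\eta$ equal to $1$ on $B_R$, supported in $B_{2R}$, with $|\nabla\eta|\le C/R$, and apply \eqref{eq:caccipoli1} to $v_s$ on $[t_0,t_0+R^{p+1}]$. Requiring $\delta<\delta_0$ so that $C\delta_0\le 1/2$, the $-Cv_s^3$ correction on the LHS is absorbed into $v_s^2/2$, giving
\begin{equation*}
\sup_{t_0\le t\le t_0+R^{p+1}}\!\int_{B_R^+}\! v_s^2\,G^{p+1}\,\ud x
\;\le\; 2\!\int_{B_{2R}^+}\! v_s^2\eta^2 G^{p+1}(\cdot,t_0)\,\ud x
+ C\!\int_{t_0}^{t_0+R^{p+1}}\!\!\int_{B_{2R}^+}\!|\nabla\eta|^2 G^2 v_s^2\,\ud x\,\ud t.
\end{equation*}
The hypothesis at $t_0$ enters via the initial term on $B_R^+$: since $\{v_s(\cdot,t_0)>0\}\cap B_R^+\subset\{w(\cdot,t_0)>M_1-\delta\}\cap B_R^+$, that contribution is at most $(\delta/2^s)^2(1-\sigma)|B_R^+|_{\mu_{p+1}}$.

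Suppose now, for contradiction, that the conclusion fails at some $t^*\in(t_0,t_0+R^{p+1}]$. Then on a set of $\mu_{p+1}$-measure at least $(1-\sigma/2)|B_R^+|_{\mu_{p+1}}$ one has $w(\cdot,t^*)>M_1-\delta/2^{s_0}$, so that $v_{s_0-1}(\cdot,t^*)\ge \delta/2^{s_0}$ on that set and
$\int_{B_R^+} v_{s_0-1}^2 G^{p+1}(\cdot,t^*)\,\ud x \ge c(\delta/2^{s_0})^2(1-\sigma/2)|B_R^+|_{\mu_{p+1}}$.
Comparing this lower bound with the Caccioppoli upper bound applied to $v_{s_0-1}$, and tracking how the $(1-\sigma)$ factor from the hypothesis combats the competing errors --- the gradient forcing $|\nabla\eta|^2 G^2 v^2$ and the outer-shell contribution from $B_{2R}^+\setminus B_R^+$ (where the hypothesis is unavailable) --- closes the argument by choosing $s_0$ large enough depending on $n,p,\Lambda,\overline m,\overline M$ and $\sigma$.

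The main obstacle will be this final balancing: in a crude estimate, both the outer-shell and gradient-forcing terms scale with the same power of $\delta/2^{s_0-1}$ as the initial term, so one cannot simply iterate the standard truncation scheme. The gain must be extracted by exploiting the $\sigma$-improvement from the hypothesis together with the freedom to make $s_0$ arbitrarily large, absorbing the residual errors using the smallness of $\delta$. Apart from this balancing, the only genuinely new ingredient compared with the $p>1$ case of \cite{JX22} is the handling of the $-Cv^3$ correction in \eqref{eq:caccipoli1}, which is intrinsic to $p\in(0,1)$ and is absorbed uniformly via $\delta<\delta_0$ small, as already built into Lemma \ref{lem:degiorgiclass}.
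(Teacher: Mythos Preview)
Your proposal has a genuine gap: the contradiction argument cannot close because the two sides of your inequality scale identically in $\delta/2^{s_0}$, and the numerical constants work against you.  Concretely, with $v_{s_0-1}=(w-k_{s_0-1})_+$ you have $v_{s_0-1}\le \delta/2^{s_0-1}$ everywhere, so the initial term on $B_R^+$ alone already contributes $2(\delta/2^{s_0-1})^2(1-\sigma)|B_R^+|_{\mu_{p+1}}$ to the right-hand side, while your lower bound at $t^*$ is only $(\delta/2^{s_0})^2(1-\sigma/2)|B_R^+|_{\mu_{p+1}}=\tfrac14(\delta/2^{s_0-1})^2(1-\sigma/2)|B_R^+|_{\mu_{p+1}}$.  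The contradiction would thus require $(1-\sigma/2)>8(1-\sigma)$ even \emph{before} accounting for the outer-shell and gradient terms, which fails for $\sigma<14/15$.  Neither increasing $s_0$ nor shrinking $\delta$ helps, since every term carries the same factor $(\delta/2^{s_0-1})^2$; you correctly identified this homogeneity as the obstacle but did not actually overcome it.

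The paper's proof avoids this loss via three devices you are missing.  First, it truncates at the \emph{original} level $M_1-\delta$ (not at $k_{s_0-1}$): on $\{w>M_1-\delta 2^{-k_1}\}$ one then has $v\ge\delta(1-2^{-k_1})$, so the ratio between the lower and upper $\delta$-factors is $(1-2^{-k_1})^2\to 1$ rather than $1/4$.  Second, the cutoff is taken supported in $B_R$ and equal to $1$ on $B_{\beta R}$, so there is no outer-shell term at $t_0$; the price is an annulus $B_R\setminus B_{\beta R}$ of measure $C(1-\beta)|B_R^+|_{\mu_{p+1}}$ added back, and $\beta$ is optimized against the gradient term.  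Third---and crucially---the argument is first run on a \emph{short} time interval $[t_0,t_0+aR^{p+1}]$ with $a$ chosen small so that the gradient contribution, controlled by $|A^a|_{\nu_2}/|Q_R|_{\nu_2}\le a$, is at most $\sigma/8$.  This yields the density bound only on the short interval; extending it to the full interval $[t_0,t_0+R^{p+1}]$ requires an induction over $N=a^{-1}$ subintervals, where at each step Lemma~\ref{lem:decay-1} is invoked to pass to a deeper level $M_1-\delta/2^{s_j}$ before restarting.  This iteration via Lemma~\ref{lem:decay-1} is the mechanism that actually produces a large $s_0$, and it is absent from your outline.
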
  

\begin{proof}  Let $\eta$ be a cut-off function supported in $B_R$ and $\eta=1$ in $B_{\beta R}$, where $0<\beta<1$ to be fixed.  Let $0<a\le 1$  and 
\[
A^a(k,R)= \{B_R^+ \times[t_0, t_0+a R^{p+1}] \}\cap \{w>k\}.
\]
Let $k_1>1$.  By Lemma \ref{lem:degiorgiclass},  we have
\begin{align*}
&\sup_{t_0<t< t_0+a R^{p+1}} \int_{B_R^+} (v^{2}-Cv^3)\eta^2 G^{p+1} \,\ud x  \\&
\quad\le \int_{B_R^+} v^{2}\eta^2 G^{p+1} \,\ud x\Big|_{t_0}  +  C \int_{B_R^+\times[t_0,t_0+a R^{p+1} ] } |\nabla \eta|^2 G^2 v^{2}\,\ud x\ud t,
\end{align*} 
where $v=(w- (M_1 -\delta))^+$. Choose $\delta_0$ small such that $1-C\delta_0>1/2$. Note that 
\begin{align*}
\int_{B_R^+} (v^{2}-Cv^3)\eta^2 G^{p+1} \,\ud x \Big|_t &\ge (1-C\delta)\delta^2(1-2^{-k_1})^2  |B_{\beta R} \cap \{w(x,t)> M_1- \delta 2^{-k_1}\}|_{\mu_{p+1}},\\
\int_{B_R^+} v^{2}\eta^2 G^{p+1} \,\ud x\Big|_{t_0}  &\le \delta^2 |\{x\in B_R^+: w(x,t_0)>M_1-\delta\}|_{\mu_{p+1}} \\&\le  \delta^2  (1-\sigma) |B_R^+|_{\mu_{p+1}},
\end{align*}
and 
\begin{align*}
 \int_{B_R^+\times[t_0,t_0+a R^{p+1} ] } |\nabla \eta|^2 G^2 v^{2}\,\ud x\ud t &\le \delta^2 \frac{C}{(1-\beta)^2R^2} |A^a(M_1- \delta ,R)|_{\nu_2}\\&
 \le \delta^2|B_R^+|_{\mu_{p+1}} \frac{C }{(1-\beta)^2} \frac{|A^a(M_1- \delta ,R)|_{\nu_2} }{|Q_R|_{\nu_2}}.
\end{align*}
It follows that for all $t\in [t_0, t_0+a R^{p+1}]$, 
\begin{align*}
& |B_{\beta R}^+ \cap \{w(x,t)> M_1- \delta 2^{-k_1}\}|_{\mu_{p+1}} \\
&\le |B_R^+|_{\mu_{p+1}}  \left( \frac{(1-\sigma)}{(1-C \delta) (1-2^{-k_1})^2 }+\frac{C }{(1-\beta)^2} \frac{|A^a(M_1- \delta ,R)|_{\nu_2} }{|Q_R|_{\nu_2}} \right)\\
&\le |B_R^+|_{\mu_{p+1}}  \left( \frac{(1+C \delta) (1-\sigma)}{(1-2^{-k_1})^2 }+\frac{C }{(1-\beta)^2} \frac{|A^a(M_1- \delta ,R)|_{\nu_2} }{|Q_R|_{\nu_2}} \right).
\end{align*}
Hence,
\begin{align*}
& |B_{R}^+ \cap \{w(x,t)> M_1- \delta 2^{-k_1}\}|_{\mu_{p+1}} \\&\le |B_R^+|_{\mu_{p+1}}  \left( C(1-\beta)+\frac{(1+C \delta) (1-\sigma)}{(1-2^{-k_1})^2 }+\frac{C }{(1-\beta)^2  } \frac{|A^a(M_1- \delta ,R)|_{\nu_2} }{|Q_R|_{\nu_2}} \right).
\end{align*}
By choosing $\beta$ such that
\[
(1-\beta)^3=\frac{|A^a(M_1- \delta ,R)|_{\nu_2} }{|Q_R|_{\nu_2}}, 
\] 
we have
\begin{align}\label{eq:smallinitiallater}
& |B_{R}^+ \cap \{w(x,t)> M_1- \delta 2^{-k_1}\}|_{\mu_{p+1}} \nonumber\\
&\le |B_R^+|_{\mu_{p+1}}  \left(\frac{(1+C \delta) (1-\sigma)}{(1-2^{-k_1})^2 }+C \Big(\frac{|A^a(M_1- \delta ,R)|_{\nu_2} }{|Q_R|_{\nu_2}}\Big)^{\frac13} \right),
\end{align}
where $C>0$ depends only on $n,p,\Lambda,\overline m$ and $\overline M$. 
Since
\[
\frac{|A^a(M_1- \delta ,R)|_{\nu_2} }{|Q_R|_{\nu_2}}\le a,
\]
we can choose $a$ small such that
\[
Ca^{1/3}\le\frac{\sigma}{8}.
\] 
Now we fix such an $a$. We choose $a$ slightly smaller if necessary to make $a^{-1}$ to be an integer. Let $N=a^{-1}$ and denote
 \[
 t_j=t_0+jaR^{p+1}\quad j=1,2,\cdots,N.
 \]
 We will inductively prove that there exist $s_1<s_2<\cdots<s_N$ such that
 \begin{equation}\label{eq:densitypropa}
\sup_{t_{j-1}\le t\le t_j}|B_{ R}^+ \cap \{w(x,t)> M_1- \delta 2^{-s_j}\}|_{\mu_{p+1}}  \le \left(1-\sigma+\frac{j}{4N} \sigma\right)  |B_R^+|_{\mu_{p+1}},
\end{equation}
where all the $s_j$ depend only on  $n,p,\Lambda,\overline m, \overline M$ and $\sigma$, from which the conclusion of this lemma follow.

Let us consider $j=1$ first.   

There exist $\delta_0$ small and $k_0$ large, both of which depends only on $n$, $p$, $\Lambda$, $\overline m$, $\overline M$ and $\sigma$,  such that  for all $\delta\in(0,\delta_0]$ and all $k_1\ge k_0$, we have
\[
\frac{(1+C \delta) (1-\sigma)}{(1-2^{-k_1})^2 }\le 1-\sigma+\frac{\sigma}{8N}.
\]
Then by \eqref{eq:smallinitiallater}, 
\[
|B_{ R}^+ \cap \{w(x,t)> M_1- \delta 2^{-k_1}\}|_{\mu_{p+1}}  \le \left(1-\frac{3}{4} \sigma\right)  |B_R^+|_{\mu_{p+1}} 
\]
 for all $t\in [t_0, t_1]$.  Applying Lemma \ref{lem:decay-1} and \eqref{eq:connection}, for every $k_2>k_1$, we have
\[
 \frac{|A^a(M_1- \delta 2^{-k_2},R)|_{\nu_2} }{|Q_R|_{\nu_2}}\le  C \frac{|A^a(M_1- \delta 2^{-k_2},R)|_{\nu_{p+1}} }{|Q_R|_{\nu_{p+1}}}  \le \frac{C}{\sigma}\left(\frac{a}{k_2-k_1}\right)^{\frac{p}{p+1}}.
\]
Hence, we can choose $k_2$ large enough such hat
\[
C\left( \frac{|A^a(M_1- \delta 2^{-k_2},R)|_{\nu_2} }{|Q_R|_{\nu_2}}\right)^{\frac 13}\le \frac{\sigma}{8N}.
\]
Let $k_1=k_0$ and $s_1=k_1+k_2$. By replacing $\delta$ by $\delta 2^{-k_2}$ in \eqref{eq:smallinitiallater}, it follows that
\[
\sup_{t_{0}\le t\le t_1}|B_{ R}^+ \cap \{w(x,t)> M_1- \delta 2^{-s_1}\}|_{\mu_{p+1}}  \le \left(1-\sigma+\frac{1}{4N} \sigma\right)  |B_R^+|_{\mu_{p+1}}.
\]
This prove \eqref{eq:densitypropa} for $j=1$. The proof for $j=2,3,\cdots,N$ is similar, by considering the starting time as $t_{j-1}$. We omit the details.
\end{proof}

 Similarly,

\begin{lem}\label{lem:decay-2'} Let  $0<R<\frac12$, $-\frac12<t_0\le -R^{p+1}$, $\overline m\le m_1\le \inf_{B_{2R}^+ \times [t_0, t_0+R^{p+1}] } u$ and $0<\sigma<1$. There exist constants $\delta_0>0$ and $k_0>1$ depending only on  $n$, $p$, $\Lambda$, $\overline m$, $\overline M$ and $\sigma$  such that if $0<\delta<\delta_0$ and 
\[
|\{x\in B_R^+: w(x,t_0)<m_1+\delta\}|_{\mu_{p+1}}\le (1-\sigma) |B_R^+|_{\mu_{p+1}}, 
\]
then
\[
|\{x\in B_R^+: w(x,t)<m_1+\frac{\delta}{2^{k_0}}\}|_{\mu_{p+1}}\le (1-\frac{\sigma}{2}) |B_R^+|_{\mu_{p+1}} \quad \mbox{for any } t_0\le t\le  t_0+R^{p+1} . 
\] 
\end{lem}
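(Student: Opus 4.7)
The plan is to mirror the proof of Lemma \ref{lem:decay-2} verbatim, replacing the truncation $v=(w-k)^+$ everywhere by $\tilde v=(w-k)^-$, invoking the second Caccioppoli inequality \eqref{eq:caccipoli2} in place of \eqref{eq:caccipoli1}, and applying Lemma \ref{lem:decay-1'} in place of Lemma \ref{lem:decay-1}. With the notation $k_j=m_1+\delta 2^{-j}$, a cutoff $\eta$ supported in $B_R$ with $\eta\equiv 1$ on $B_{\beta R}$ (where $0<\beta<1$ is to be chosen), and
\[
A^a(k,R):=\{B_R^+\times [t_0,t_0+aR^{p+1}]\}\cap\{w<k\},
\]
the first step is to apply \eqref{eq:caccipoli2} to $\tilde v=(w-k_0)^-=(w-m_1-\delta)^-$ on the strip $B_R^+\times[t_0,t]$ for each $t\in[t_0,t_0+aR^{p+1}]$.

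The only structural difference from the maximum case is the sign of the $\tilde v^3$ correction: it now appears on the \emph{initial} slice of the right-hand side, rather than inside the $\sup$ on the left. Since $0\le\tilde v\le\delta$ on its support, the term $C\tilde v^3$ is bounded by $C\delta\,\tilde v^2$ and hence costs only a multiplicative factor $1+C\delta$ once $\delta\le\delta_0$ is small. After estimating the initial-slice term via the hypothesis, bounding $|\nabla\eta|^2\le C/((1-\beta)^2 R^2)$, and using \eqref{eq:connection} to pass from the $\nu_2$ to the $\nu_{p+1}$ measure, one arrives at the exact analog of \eqref{eq:smallinitiallater}: for every $k_1\ge 1$ and every $t\in[t_0,t_0+aR^{p+1}]$,
\begin{align*}
&|B_R^+\cap\{w(\cdot,t)<m_1+\delta 2^{-k_1}\}|_{\mu_{p+1}} \\
&\le |B_R^+|_{\mu_{p+1}}\left(\frac{(1+C\delta)(1-\sigma)}{(1-2^{-k_1})^2}+C\Big(\frac{|A^a(m_1+\delta,R)|_{\nu_2}}{|Q_R|_{\nu_2}}\Big)^{1/3}\right),
\end{align*}
after optimizing $\beta$ via $(1-\beta)^3=|A^a(m_1+\delta,R)|_{\nu_2}/|Q_R|_{\nu_2}$.

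Next, I would fix $a\in(0,1]$ so small that $Ca^{1/3}\le\sigma/8$ and $N:=1/a$ is an integer, partition $[t_0,t_0+R^{p+1}]$ into $N$ slices $[t_{j-1},t_j]$ with $t_j=t_0+jaR^{p+1}$, and induct on $j$ to produce levels $s_1<s_2<\cdots<s_N$ satisfying
\[
\sup_{t_{j-1}\le t\le t_j}|B_R^+\cap\{w(\cdot,t)<m_1+\delta 2^{-s_j}\}|_{\mu_{p+1}}\le\Big(1-\sigma+\frac{j}{4N}\sigma\Big)|B_R^+|_{\mu_{p+1}}.
\]
For the base case $j=1$, choose $k_1$ large and $\delta_0$ small (depending on $n,p,\Lambda,\overline m,\overline M,\sigma$) so that the first term in the displayed bound is at most $1-\sigma+\sigma/(8N)$; then apply Lemma \ref{lem:decay-1'} combined with \eqref{eq:connection} to choose $k_2>k_1$ large enough that the bad-set contribution, re-run with $\delta$ replaced by $\delta 2^{-k_2}$, is at most $\sigma/(8N)$. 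Setting $s_1=k_1+k_2$ gives the claim for $j=1$. The inductive step is identical with starting time $t_{j-1}$ in place of $t_0$, using the already-established bound at the slice $\{t=t_{j-1}\}$ as the new hypothesis.

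Taking $k_0=s_N$ yields the conclusion, since the bookkeeping constant at $j=N$ is $1-\sigma+\sigma/4=1-3\sigma/4\le 1-\sigma/2$. The main (really the only) technical point to watch is the sign of the $\tilde v^3$ correction discussed above, which I expect to be handled by absorbing it into $1+C\delta$ exactly as the $-Cv^3$ term is handled in Lemma \ref{lem:decay-2}; all the remaining geometric-measure and De Giorgi iteration steps transfer without change.
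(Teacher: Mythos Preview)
Your proposal is correct and matches exactly what the paper intends: the paper does not give a separate proof of this lemma but simply prefaces it with ``Similarly,'' indicating that one mirrors the proof of Lemma~\ref{lem:decay-2} with $(w-k)^-$ in place of $(w-k)^+$, \eqref{eq:caccipoli2} in place of \eqref{eq:caccipoli1}, and Lemma~\ref{lem:decay-1'} in place of Lemma~\ref{lem:decay-1}. Your identification of the only structural change---that the cubic correction $+C\tilde v^3$ now sits on the initial slice and is absorbed as a $(1+C\delta)$ factor, yielding the same inequality \eqref{eq:smallinitiallater}---is precisely the point, and the rest of the argument (choice of $a$, induction over the $N$ subintervals, use of Lemma~\ref{lem:decay-1'} with \eqref{eq:connection}) carries over verbatim.
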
  

Finally, combining all the above lemmas, we obtain the improvement of oscillation of $w$ at the boundary.

\begin{lem}\label{lem:decay-3} Let $0<R<\frac12$, $\overline M\ge M\ge \sup_{B_{2R}^+ \times [-R^{p+1},0] } u$ and $0<\sigma<1$. There exist constants $\delta_0>0$ and $k_0>1$ depending only on $n$, $p$, $\Lambda$, $\overline m$, $\overline M$ and $\sigma$  such that if $0<\delta<\delta_0$ and 
\[
|\{x\in B_R^+: w(x,-R^{p+1})>M-\delta\}|_{\mu_{p+1}}\le (1-\sigma) |B_R^+|_{\mu_{p+1}}, 
\]
then 
\[
\sup_{Q_{R/2}^+} w\le M-\frac{\delta}{2^{k_0}}.  
\]

\end{lem}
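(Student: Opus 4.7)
The plan is to combine the three De Giorgi–type ingredients already established in this section: Lemma \ref{lem:decay-2} (the propagation-in-time of the slice density estimate), Lemma \ref{lem:decay-1} (the decay of the space–time distribution function along a geometric sequence of levels), and Lemma \ref{lem:smallonlargeset}(i) (the pointwise upgrade from smallness on a large set to a genuine improvement of the supremum). The strategy is just to chain them in that order, starting from the initial-slice hypothesis at $t = -R^{p+1}$.

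First, I would invoke Lemma \ref{lem:decay-2} with $t_0 = -R^{p+1}$, $M_1 = M$, and the same $\sigma$. Provided $\delta \leq \delta_0$ (shrinking $\delta_0$ if necessary to fit the hypotheses of all lemmas used below), this produces some $s_0 > 1$ such that
\[
|\{x \in B_R^+ : w(x,t) > M - \delta/2^{s_0}\}|_{\mu_{p+1}} \leq \bigl(1 - \tfrac{\sigma}{2}\bigr) |B_R^+|_{\mu_{p+1}} \quad \text{for every } t \in [-R^{p+1}, 0].
\]
Thus the slice density hypothesis at the initial time has been spread, with some quantitative loss, to every time in the cylinder $Q_R^+$.

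Second, I would feed this slice estimate into Lemma \ref{lem:decay-1} with $a = 1$, $\sigma' = \sigma/2$, and level $M - \delta/2^{s_0}$. This yields
\[
\frac{|\{(x,t) \in Q_R^+ : w(x,t) > M - \delta/2^{s_0 + \ell}\}|_{\nu_{p+1}}}{|Q_R^+|_{\nu_{p+1}}} \leq \frac{C}{\sigma\, \ell^{p/(1+p)}} \quad \forall\, \ell \in \mathbb{Z}^+,
\]
so by choosing $\ell$ large enough (depending only on $n, p, \Lambda, \overline{m}, \overline{M}, \sigma$) the right-hand side drops below the threshold $\gamma_0$ of Lemma \ref{lem:smallonlargeset}(i). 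Applying that lemma at level $M - \delta/2^{s_0+\ell}$ then gives
\[
w \leq M - \frac{\delta}{2^{s_0 + \ell + 1}} \quad \text{in } Q_{R/2}^+,
\]
and setting $k_0 := s_0 + \ell + 1$ finishes the proof.

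There is no real obstacle beyond bookkeeping. The one point that requires a little care is compatibility of the various $\delta_0$'s: each of Lemmas \ref{lem:smallonlargeset}, \ref{lem:decay-1} and \ref{lem:decay-2} has its own smallness threshold (arising from \eqref{eq:nonlinear-term} and the form of the Caccioppoli inequality), so the final $\delta_0$ in Lemma \ref{lem:decay-3} must be chosen as the minimum of all of them, and one must also verify that $\delta/2^{s_0}$ still lies below the threshold required by Lemma \ref{lem:decay-1}. Once that is observed, the three estimates combine mechanically in the manner above.
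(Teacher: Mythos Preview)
Your proposal is correct and follows exactly the route taken in the paper: chain Lemma~\ref{lem:decay-2}, then Lemma~\ref{lem:decay-1} with $a=1$, then Lemma~\ref{lem:smallonlargeset}(i). The paper's proof is in fact just that one sentence, and your write-up simply unpacks the bookkeeping.
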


\begin{proof} It follows from Lemma \ref{lem:decay-2},  Lemma \ref{lem:decay-1} with $a=1$ and Lemma \ref{lem:smallonlargeset}.  
\end{proof}

\begin{lem}\label{lem:decay-3'} Let $0<R<\frac12$, $\overline m\le m\le \inf_{B_{2R}^+ \times [-R^{p+1},0] } u$ and $0<\sigma<1$. There exist constants $\delta_0>0$ and $k_0>1$ depending only on  $n$, $p$, $\Lambda$, $\overline m$, $\overline M$ and $\sigma$ such that if $0<\delta<\delta_0$ and 
\[
|\{x\in B_R^+: w(x,-R^{p+1})<m+\delta\}|_{\mu_{p+1}}\le (1-\sigma) |B_R^+|_{\mu_{p+1}}, 
\]
then 
\[
\inf_{Q_{R/2}^+} w\ge m+\frac{\delta}{2^{k_0}}.  
\]

\end{lem}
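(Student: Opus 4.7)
The proof plan mirrors that of Lemma \ref{lem:decay-3}, with each ingredient replaced by its infimum counterpart. The strategy is to propagate the smallness in $x$ at the initial slice $t=-R^{p+1}$ forward in time, then upgrade the time-slice smallness to a smallness of the parabolic measure, and finally invoke the De Giorgi-type alternative to obtain a pointwise lower bound on $w$ in $Q_{R/2}^+$.

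First, I would apply Lemma \ref{lem:decay-2'} with $t_0=-R^{p+1}$. By hypothesis, the set $\{x\in B_R^+: w(x,-R^{p+1})<m+\delta\}$ occupies at most the fraction $(1-\sigma)$ of $|B_R^+|_{\mu_{p+1}}$, so the lemma produces constants $\delta_0>0$ and $s_0>1$ depending only on $n,p,\Lambda,\overline m,\overline M,\sigma$ such that, provided $\delta\le\delta_0$,
\[
|\{x\in B_R^+: w(x,t)<m+\tfrac{\delta}{2^{s_0}}\}|_{\mu_{p+1}}\le (1-\tfrac{\sigma}{2})|B_R^+|_{\mu_{p+1}}\quad\text{for every } t\in[-R^{p+1},0].
\]

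Next, I would apply Lemma \ref{lem:decay-1'} on the parabolic cylinder $B_R^+\times[-R^{p+1},0]$ with $a=1$, $t_0=-R^{p+1}$, $m_a:=m$, and with the new threshold $\delta/2^{s_0}$ (which, after possibly shrinking $\delta_0$, is still below the $\delta_0$ allowed by Lemma \ref{lem:decay-1'}), and with $\sigma/2$ playing the role of the density parameter. The conclusion is that for every $\ell\in\mathbb{Z}^+$,
\[
\frac{|\{(x,t)\in Q_R^+: w(x,t)<m+\tfrac{\delta}{2^{s_0+\ell}}\}|_{\nu_{p+1}}}{|Q_R^+|_{\nu_{p+1}}}\le \frac{C}{(\sigma/2)\,\ell^{p/(1+p)}},
\]
with $C$ depending only on $n,p,\Lambda,\overline m,\overline M$.

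Finally, I would choose $\ell=\ell(n,p,\Lambda,\overline m,\overline M,\sigma)$ large enough so that the right-hand side is below the universal threshold $\gamma_0$ of Lemma \ref{lem:smallonlargeset}(ii). Applying that alternative on $Q_R^+$ with the threshold $m+\delta/2^{s_0+\ell}$ then yields
\[
w\ge m+\frac{\delta}{2^{s_0+\ell+1}}\quad\text{in }Q_{R/2}^+,
\]
so setting $k_0:=s_0+\ell+1$ gives the claim.

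The only subtlety, and the step where one must be a bit careful, is the bookkeeping on the thresholds $\delta,\delta/2^{s_0},\delta/2^{s_0+\ell}$: each of the three lemmas comes with its own upper bound on the admissible threshold, so $\delta_0$ in the final statement should be taken as the minimum of the three (the outer one being rescaled by $2^{s_0+\ell}$). Since $s_0$ and $\ell$ depend only on the allowed parameters, this final $\delta_0$ still depends only on $n,p,\Lambda,\overline m,\overline M,\sigma$, as required. No genuinely new analytic obstacle arises beyond those already overcome in the proof of Lemma \ref{lem:decay-3}; the asymmetry between $v=(w-k)^+$ and $\tilde v=(w-k)^-$ in the Caccioppoli estimates \eqref{eq:caccipoli1}-\eqref{eq:caccipoli2} is already absorbed into the infimum versions of the preceding lemmas.
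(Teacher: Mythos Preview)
Your proposal is correct and follows exactly the same route as the paper's proof, which simply reads ``It follows from Lemma~\ref{lem:decay-2'}, Lemma~\ref{lem:decay-1'} with $a=1$ and Lemma~\ref{lem:smallonlargeset}.'' Your only (harmless) over-caution is the bookkeeping on $\delta_0$: in the infimum direction, neither Lemma~\ref{lem:decay-1'} nor Lemma~\ref{lem:smallonlargeset}(ii) imposes a smallness constraint on $\delta$, so the only restriction on $\delta_0$ comes from Lemma~\ref{lem:decay-2'}.
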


\begin{proof} It follows from Lemma \ref{lem:decay-2'},  Lemma \ref{lem:decay-1'} with $a=1$ and Lemma \ref{lem:smallonlargeset}.  
\end{proof}

Then the \emph{a priori} H\"older estimate at the boundary follows in a standard way.

\begin{thm}\label{thm:holderatboundary}
Suppose \eqref{eq:G}, \eqref{eq:ellipticity} and \eqref{eq:rangeofp} hold.  Suppose $w$ is Lipschitz continuous in $\overline B_1^+\times(-1,0]$, satisfies \eqref{eq:lo-up}, and is a solution of \eqref{eq:main3} in the sense of distribution. 
Then there exist $\alpha>0$ and $C>0$, both of which depend only on $n$, $p$, $\Lambda$, $\overline m$ and $\overline M$, such that for every $\bar x\in\pa' B_{1/2}^+$ and $\bar t\in (-1/4,0)$, there holds
\[
|w(x,t)-w(\bar x,\bar t)|\le C (|x-\bar x|+|t-\bar t|^{\frac{1}{p+1}})^\alpha\quad\forall\,(x,t)\in B_{1/2}^+(\bar x)\times(-1/4+\bar t,\bar t].
\]
\end{thm}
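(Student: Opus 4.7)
The plan is to iterate the oscillation-decay statements in Lemma~\ref{lem:decay-3} and Lemma~\ref{lem:decay-3'} on a geometric sequence of parabolic cylinders centered at $(\bar x,\bar t)$, deduce geometric decay of the oscillation of $w$, and read off the pointwise H\"older estimate in the standard way. Choose $R_0\in(0,1/4]$ small enough (depending only on $p$) so that, setting $R_k:=R_0/2^k$, for every $k\ge 0$ both $Q_{R_k}^+(\bar x,\bar t):=B_{R_k}^+(\bar x)\times(\bar t-R_k^{p+1},\bar t]$ and the enlarged cylinder $B_{2R_k}^+(\bar x)\times(\bar t-R_k^{p+1},\bar t]$ lie inside $B_1^+\times(-1,0]$; this is possible since $\bar x\in\partial' B_{1/2}^+$ (so $|\bar x|=1/2$ and $\bar x_n=0$) and $\bar t\in(-1/4,0)$. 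Let $M_k:=\sup_{Q_{R_k}^+(\bar x,\bar t)}w$, $m_k:=\inf_{Q_{R_k}^+(\bar x,\bar t)}w$, and $\omega_k:=M_k-m_k\in[0,\overline M-\overline m]$.

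At each scale I apply a dichotomy at the bottom time slice $t_k:=\bar t-R_k^{p+1}$. The set $\{x\in B_{R_k}^+(\bar x): w(x,t_k)>M_k-\omega_k/2\}$ and its complement partition $B_{R_k}^+(\bar x)$, and using $M_k-\omega_k/2=m_k+\omega_k/2$ the complement equals $\{w(\cdot,t_k)\le m_k+\omega_k/2\}$; hence at least one has $\mu_{p+1}$-measure at most $\tfrac12|B_{R_k}^+(\bar x)|_{\mu_{p+1}}$. Let $\delta_0,k_0$ be the constants from Lemmas~\ref{lem:decay-3}--\ref{lem:decay-3'} corresponding to $\sigma=1/2$, and set $\delta:=\min(\omega_k/2,\delta_0)$. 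In the first case, the inclusion $\{w>M_k-\delta\}\subseteq\{w>M_k-\omega_k/2\}$ transfers the smallness hypothesis, and Lemma~\ref{lem:decay-3} yields $M_{k+1}\le M_k-\delta/2^{k_0}$; in the second case, the analogous inclusion and Lemma~\ref{lem:decay-3'} give $m_{k+1}\ge m_k+\delta/2^{k_0}$. Either way,
\[
\omega_{k+1}\le\omega_k-\frac{1}{2^{k_0}}\min\!\Big(\frac{\omega_k}{2},\,\delta_0\Big).
\]
While $\omega_k>2\delta_0$ this is a linear decrement of size $\delta_0/2^{k_0}$; once $\omega_k\le 2\delta_0$, it becomes the multiplicative bound $\omega_{k+1}\le(1-2^{-k_0-1})\omega_k$. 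Since $\omega_0\le\overline M-\overline m$, the linear phase terminates after a universal number of steps.

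Consequently, setting $\eta:=1-2^{-k_0-1}\in(0,1)$ and $\alpha:=-\log_2\eta>0$, one obtains $\omega_k\le C_1\eta^k=C_1(R_k/R_0)^\alpha$ for all $k\ge 0$, with $C_1$ depending only on $n,p,\Lambda,\overline m,\overline M$. For any $(x,t)\in B_{1/2}^+(\bar x)\times(-1/4+\bar t,\bar t]$ set $\rho:=|x-\bar x|+|t-\bar t|^{1/(p+1)}$. If $\rho<R_0$, pick $k\ge 0$ with $R_{k+1}\le\rho<R_k$; then $(x,t)\in Q_{R_k}^+(\bar x,\bar t)$ and
\[
|w(x,t)-w(\bar x,\bar t)|\le\omega_k\le 2^\alpha C_1(\rho/R_0)^\alpha.
\]
For $\rho\ge R_0$, the trivial bound $|w(x,t)-w(\bar x,\bar t)|\le\overline M-\overline m$ and $(\rho/R_0)^\alpha\ge 1$ absorb into the constant $C$. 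The main obstacle is the constraint $\delta\le\delta_0$ in the oscillation-decay lemmas, which forces the linear-then-geometric two-phase argument above; apart from this, the reasoning is the standard oscillation iteration transcribed to the degenerate measures $\mu_{p+1},\nu_{p+1}$.
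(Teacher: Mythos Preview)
Your approach---iterating Lemmas~\ref{lem:decay-3} and \ref{lem:decay-3'} via a dichotomy at the bottom time slice to obtain geometric oscillation decay---is exactly the standard argument the paper intends (it gives no details beyond ``follows in a standard way''). One small slip: Lemma~\ref{lem:decay-3} requires $M\ge\sup w$ over the \emph{enlarged} cylinder $B_{2R}^+\times[\bar t-R^{p+1},\bar t]$, not merely over $Q_R^+$, so your $M_k,m_k$ should be defined as the extrema over $B_{2R_k}^+(\bar x)\times(\bar t-R_k^{p+1},\bar t]$; correspondingly take $R_{k+1}=R_k/4$ so that $B_{2R_{k+1}}^+(\bar x)\times(\bar t-R_{k+1}^{p+1},\bar t]\subset Q_{R_k/2}^+(\bar x,\bar t)$, and then the iteration (dichotomy, two-phase decay, and final H\"older estimate) goes through verbatim.
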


\begin{proof}[Proof of Theorem \ref{thm:holdernearboundary}]
Theorem \ref{thm:holdernearboundary} follows from Theorem \ref{thm:holderatboundary}, H\"older estimates for uniformly parabolic equations, and a scaling argument. It will be identical to that of Theorem 3.1 in \cite{JX22}, so that we omit the details.
\end{proof}

\section{All time regularity of solutions with compatible initial data}\label{sec:alltime}

 Let $\omega$ be a smooth function in $\overline\Omega$ comparable to the distance function $d$, that is, $0<\inf_{\Omega}\frac{\omega}{d}\le \sup_{\Omega}\frac{\omega}{d}<\infty$. 
 For example, $\omega$ can be taken as the nonnegative normalized first eigenfunction of $-\Delta$ in $\Omega$ with the zero Dirichlet boundary condition. Because of \eqref{eq:stability}, the linearized equation of \eqref{eq:main} will be of the form:
\begin{equation} \label{eq:general}
\begin{split}
\pa_t u-\omega(x)^{1-p} \left[\sum_{i,j=1}^n a_{ij}(x,t) u_{ij} + \sum_{i,j=1}^n b_i(x,t)u_i\right]+c (x,t) u  &=f \quad \mbox{in }\Omega \times(-1,0],
\end{split}
\end{equation}
where the matrix $(a_{ij}(x,t))_{n\times n}$ is  symmetric and satisfies
\be \label{eq:ellip}
\lda |\xi|^2 \le \sum_{i,j=1}^na_{ij}(x,t)\xi_i\xi_j\le \Lda |\xi|^2 \quad\forall\ \xi\in\R^n \mbox{ and } (x,t)\in \Omega \times(-1,0]
\ee
with $0<\lda\le \Lda<\infty$.

Let $\alpha\in (0,1)$ and 
\begingroup
\allowdisplaybreaks
\begin{align*}
[u]_{\mathscr{C}^\alpha( \om \times (0,T])}&:=\sup_{\substack{(x,t),(y,t)\in  \overline\om \times (0,T],\\ x\neq y}}\frac{|u(x,t)-u(y,t)|}{|x-y|^\frac{(1+p)\alpha}{2}} +\\
&\quad+ \sup_{\substack{(x,t),(y,t)\in  \overline\om \times (0,T],\\ d(x)>d(y)}}d(x)^{\frac{(1-p)\alpha}{2}}\frac{|u(x,t)-u(y,t)|}{|x-y|^\alpha} +\\
&\quad+\sup_{\substack{(x,t),(x,s)\in  \overline\om \times (0,T], \\ t\neq s}}\frac{|u(x,t)-u(x,s)|}{|t-s|^\frac{\alpha}{2}}.
\end{align*}
\endgroup
Denote
\begin{align*}
\|u\|_{\mathscr{C}^{\al}(\overline \om\times [0,T])}&= \|u\|_{L^\infty(\overline \om\times [0,T])}+[u]_{\mathscr{C}^{\al}(\overline \om\times [0,T])},\\
\|u\|_{\mathscr{C}^{2+\al}(\overline \om\times [0,T])}&=\|u\|_{\mathscr{C}^{\al}(\overline \om\times [0,T])}+\|u_t\|_{\mathscr{C}^{\al}(\overline \om\times [0,T])}+\|\nabla u\|_{\mathscr{C}^{\al}(\overline \om\times [0,T])}\\
&\quad+\|d^{1-p} D^2u\|_{\mathscr{C}^{\al}(\overline \om\times [0,T])}.
\end{align*}
Using the method of Daskalopoulos-Hamilton \cite{DH} proving Schauder estimates for \eqref{eq:general} with $p=0$, Kim-Lee \cite{KimLee} proved the following Schauder estimates for \eqref{eq:general} with all $p\in (0,1)$. Note that the above weighted H\"older norm is equivalent to those defined in  \cite{DH} and \cite{KimLee}, but just written in a different way.

\begin{thm}[\cite{KimLee}]\label{thm:global-schauder} 
Let $\Omega\subset\R^n$ be any bounded smooth domain.
Let $0<p<1$ and $0<\alpha<\min(\frac{2(1-p)}{1+p},1)$. Assume all $a_{ij}, b_i, c$ and $f$ belong to $\mathscr{C}^{\al}(\Omega \times[-1,0])$, \eqref{eq:ellip} holds, and $f(x,-1)=0$ for all $x\in\partial\Omega$.  Then there exists a unique classical solution $u$ of \eqref{eq:general} satisfying $u=0$ on $\partial_{pa}(\Omega \times(-1,0])$. Moreover,
\begin{align*}
\|u\|_{\mathscr{C}^{2+\al}(\overline \om\times [-1,0])}\le  C\|f\|_{\mathscr{C}^{\al} (\Omega \times [-1,0])},
\end{align*}
where  $C>0$ depends only on $n,p,\lda,  \Lda,\alpha,\Omega$, and the $\mathscr{C}^{\al} (\Omega\times[-1,0])$ norms of $a_{ij}, b_i$ and $c$. 
\end{thm}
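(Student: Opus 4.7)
The plan is to adapt the Daskalopoulos--Hamilton approach to Schauder estimates for equations with weights vanishing linearly at the boundary, modified to handle the power $p\in(0,1)$ rather than $p=0$. The first task is to identify the intrinsic parabolic pseudo-metric associated to the operator $\partial_t - \omega(x)^{1-p}\sum a_{ij}\partial_{ij}$. Heuristically, the relation $\partial_t \sim d(x)^{1-p}\Delta$ forces time to scale like $d^{1-p}|x-y|^2$. Very close to $\partial\Omega$, balancing $d(x)\sim|x-y|$ gives the tangential scaling $|x-y|^{(1+p)/2}$, while at interior points the standard parabolic scaling $|x-y|\sim|t-s|^{1/2}$ is recovered. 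The three terms defining $[u]_{\mathscr C^\alpha}$ are precisely the Hölder seminorms with respect to this pseudo-metric, which will be used consistently throughout the proof.

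Next I would perform a boundary flattening via local diffeomorphisms and freeze the coefficients to reduce to a model equation $\partial_t u = x_n^{1-p}\sum_{i,j} a_{ij}^0 \partial_{ij} u + g$ on a half-ball, with constant positive-definite $(a_{ij}^0)$. For this model, the core estimate is an a priori $\mathscr{C}^{2+\alpha}$ bound for solutions with $\mathscr{C}^\alpha$ data. I would prove it through a Campanato-type excess decay: define the weighted excess
\[
\Phi(r) = \inf_{P\in\mathcal P_2} \frac{1}{r^{n+p+3+2\alpha}}\int_{Q_r^+} |u-P|^2\, x_n^{1-p}\,\ud x\ud t,
\]
where $\mathcal P_2$ consists of second-order polynomials compatible with the degeneracy. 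Using a Caccioppoli inequality in the weighted energy space $L^2(x_n^{1-p}\ud x)$ (analogous to Lemma~\ref{lem:degiorgiclass} but for the linear model) together with a Liouville theorem for the homogeneous model equation---global solutions with controlled polynomial growth must be the appropriate second-order polynomials---I would deduce geometric decay $\Phi(\theta r)\le \tfrac12 \Phi(r)+C\|g\|_{\mathscr C^\alpha}^2$ at each dyadic scale. Summing yields the Schauder estimate for the model, and a perturbation argument using the Hölder continuity of the coefficients gives the estimate for the general equation \eqref{eq:general}. Patching with standard interior parabolic Schauder estimates via a covering/partition of unity produces the global bound stated in the theorem.

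For existence and uniqueness, the idea is to regularize the weight by $\omega_\varepsilon := \omega+\varepsilon$ so that \eqref{eq:general} becomes uniformly parabolic with smooth coefficients. Standard parabolic theory produces a unique classical solution $u_\varepsilon$ vanishing on the parabolic boundary (the compatibility $f(\cdot,-1)=0$ on $\partial\Omega$ is exactly what is needed to match the zero initial datum without producing a $\mathscr{C}^\alpha$ singularity at the corner $\partial\Omega\times\{-1\}$). The $\varepsilon$-independent a priori Schauder bound from the previous step, combined with the Arzelà--Ascoli theorem in the weighted Hölder space, allows passage to the limit $\varepsilon\to 0$. Uniqueness follows from the weak maximum principle: the degenerate operator $\partial_t - \omega^{1-p}(a_{ij}\partial_{ij}+b_i\partial_i)+c$ still satisfies a comparison principle since $\omega^{1-p}\ge 0$, and a solution with zero Dirichlet data and $f\equiv 0$ must vanish.

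The main obstacle is the model estimate: simultaneously capturing the tangential $(1+p)/2$ and normal $(1-p)/2$ scalings in a single excess-decay framework. Both the right form of the approximating polynomial class (which must respect the degeneracy, so that $x_n^{1-p} D^2u$ rather than $D^2u$ has a well-defined trace at $x_n=0$) and the Liouville classification for the half-space model $\partial_t U = x_n^{1-p}\Delta U$ are delicate; they rely on weighted Sobolev embeddings with the Muckenhoupt $A_2$ weight $x_n^{1-p}$ (valid thanks to $p\in(0,1)$) and on a careful Fourier/separation-of-variables analysis of polynomial solutions. These ingredients are precisely what is established in \cite{KimLee} building on \cite{DH}, so we invoke the theorem in that form.
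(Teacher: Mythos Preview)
The paper does not give its own proof of this theorem: it is stated as a result of Kim--Lee \cite{KimLee}, with no argument supplied beyond the citation. Your proposal is a reasonable high-level sketch of the Daskalopoulos--Hamilton strategy as adapted by Kim--Lee to the weight $\omega^{1-p}$ with $p\in(0,1)$---intrinsic metric, flattening, model excess decay, perturbation, and regularized approximation for existence---and you yourself close by invoking \cite{KimLee} directly. So there is no discrepancy to report: the paper treats this as a black box, and your outline is consistent with how that black box is built in the cited reference.

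One minor remark: since the paper only \emph{uses} this theorem (and, as noted in Remark~\ref{rem:c1alpha}, only the $\|u_t\|_{\mathscr{C}^\alpha}+\|\nabla u\|_{\mathscr{C}^\alpha}$ part of the estimate), a full reproof along the lines you sketch would be more than the paper requires. If your goal is to match the paper, a one-line citation suffices; if your goal is an independent account, your sketch identifies the right ingredients but would need the model Liouville theorem and the weighted Campanato characterization to be carried out in detail, which is the substance of \cite{KimLee}.
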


We can also localize the above Schauder estimates in the time variable.

\begin{thm}\label{thm:globalxlocalt} 
Let $\Omega$, $p$, $\alpha$, $a_{ij}$, $b_i$, $c$, and $f$ as in Theorem \ref{thm:global-schauder}. 
Let $u\in \mathscr{C}^{2+\al}(\overline \om\times [-1,0])$  satisfy \eqref{eq:general} and $u=0$ on $\partial\Omega \times(-1,0]$. Then for every $q>0$, there exists $C>0$ depending only on $q, n,p,\lda,  \Lda,\alpha,\Omega$, and the $\mathscr{C}^{\al} (\Omega\times[-1,0])$ norms of $a_{ij}, b_i$ and $c$ such that
\begin{align*}
\|u\|_{\mathscr{C}^{2+\al}(\overline \om\times [-1/2,0])}\le  C(\|u\|_{L^q(\Omega \times [-1,0])}+\|f\|_{\mathscr{C}^{\al} (\Omega \times [-1,0])}).
\end{align*}
\end{thm}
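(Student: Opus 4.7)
The plan is to reduce Theorem \ref{thm:globalxlocalt} to the global Schauder estimate of Theorem \ref{thm:global-schauder} by a time-cutoff argument, and then absorb the resulting higher-regularity term on the right-hand side by an interpolation/iteration argument of Campanato type.

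Concretely, for $1/2\le r<R\le 1$ I would choose a smooth cutoff $\phi(t)$ with $\phi\equiv 1$ on $[-r,0]$, $\phi\equiv 0$ on $[-1,-R]$, and $|\phi^{(k)}|\le C(R-r)^{-k}$ for $k=0,1,2$, and set $v:=\phi u$. Since $u$ vanishes on $\partial\Omega\times(-1,0]$ and $\phi(-1)=0$, the function $v$ vanishes on the full parabolic boundary $\partial_{pa}(\Omega\times(-1,0])$, and it satisfies \eqref{eq:general} with right-hand side
\[
\tilde f:=\phi f+\phi'(t)u,
\]
which belongs to $\mathscr{C}^\alpha$ and vanishes on $\partial\Omega$ at $t=-1$ because $\phi(-1)=\phi'(-1)=0$. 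Applying Theorem \ref{thm:global-schauder} to $v$ and noting $v=u$ on $[-r,0]$ yields, after a direct computation of $\|\phi'u\|_{\mathscr{C}^\alpha}$ (the worst term is the time-Hölder one, giving a loss $(R-r)^{-2}$),
\[
\Psi(r)\le C\|f\|_{\mathscr{C}^\alpha(\Omega\times[-1,0])}+C(R-r)^{-2}\,\|u\|_{\mathscr{C}^\alpha(\overline\Omega\times[-R,0])},
\]
where $\Psi(r):=\|u\|_{\mathscr{C}^{2+\alpha}(\overline\Omega\times[-r,0])}$.

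Next I would invoke an interpolation inequality in the anisotropic weighted Hölder space: for every $\varepsilon>0$,
\[
\|u\|_{\mathscr{C}^\alpha(\overline\Omega\times[-R,0])}\le \varepsilon\,\Psi(R)+C_\varepsilon\,\|u\|_{L^q(\Omega\times[-1,0])}.
\]
This is proved by the standard argument that a point with large $\mathscr{C}^\alpha$ increment forces $u$ to have non-negligible $L^q$ mass on a small anisotropic cylinder, via an $L^\infty$-to-$L^q$ comparison and use of the $\mathscr{C}^{2+\alpha}$ bound to control $u$ on such a cylinder. Plugging this into the previous inequality and choosing $\varepsilon=(R-r)^2/(2C^2)$ gives
\[
\Psi(r)\le \tfrac12\Psi(R)+C(R-r)^{-4}\|u\|_{L^q(\Omega\times[-1,0])}+C\|f\|_{\mathscr{C}^\alpha(\Omega\times[-1,0])},
\]
valid for all $1/2\le r<R\le 1$. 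The standard Campanato iteration lemma for functions satisfying such a relation then absorbs the $\Psi(R)$ term and yields $\Psi(1/2)\le C\|u\|_{L^q}+C\|f\|_{\mathscr{C}^\alpha}$, which is the conclusion.

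The cutoff computation and the Campanato iteration are entirely standard once the bookkeeping of the weights is set up, so I expect the main obstacle to lie in the interpolation inequality above: the anisotropic nature of $\mathscr{C}^\alpha$ — with spatial exponent $(1+p)\alpha/2$ in the uniform seminorm but exponent $\alpha$ in the degenerate seminorm with weight $d^{(1-p)\alpha/2}$, and time exponent $\alpha/2$ — means one must separately verify the interpolation in the interior regime (where the weight is harmless and the standard parabolic cylinders apply) and in the near-boundary regime (where the natural cylinders scale as $|x-y|\sim d(x)$ so that the two spatial seminorms are comparable). Once those two regimes are handled and glued together, the rest of the proof follows by the routine scheme described above.
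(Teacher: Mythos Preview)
Your proposal is correct and is essentially the same as the paper's own proof: a time-cutoff reduces to Theorem~\ref{thm:global-schauder} with right-hand side $\phi f+\phi' u$, the resulting $\mathscr{C}^\alpha$ norm of $u$ on the right is traded for $\varepsilon\,\mathscr{C}^{2+\alpha}+C_\varepsilon L^q$ via interpolation, and the Giaquinta--Giusti iteration lemma absorbs the $\tfrac12\Psi(R)$ term. The paper simply asserts the interpolation inequality (with an unspecified exponent $\beta$ in place of your explicit $(R-r)^{-4}$), so your discussion of the boundary/interior regimes for the weighted anisotropic norm is a useful elaboration but not a departure from the paper's strategy.
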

\begin{proof}
Let $-1<\tau<s\le 0$. Let $\eta(t)$ be a cutoff function satisfying $\eta=0$ for $t \le \tau$, $\eta=1$ for $t\ge s$, $|\eta'(t)|\le C(s-\tau)^{-1}$ and $|\eta''(t)|\le C(s-\tau)^{-2}$, where $C$ is an absolute constant. 

Let $\tilde u=\eta u$. Then 
\[
\pa_t \tilde u-\omega(x)^{1-p} \left[\sum_{i,j=1}^n a_{ij}(x,t) \tilde u_{ij} + \sum_{i,j=1}^n b_i(x,t)\tilde u_i\right]+c (x,t) \tilde u  =\eta'(t) u+ f \quad \mbox{in }\Omega \times(-1,0].
\]
Then by Theorem \ref{thm:global-schauder}, we have
\[
\|u\|_{\mathscr{C}^{2+\al}(\overline \om\times [s,0])}\le  C(s-\tau)^{-2}\|u\|_{\mathscr{C}^{\al} (\Omega \times [\tau,0])}+C\|f\|_{\mathscr{C}^{\al} (\Omega \times [\tau,0])}.
\]
By an interpolation inequality, there exists $\beta>0$ depending only on $q$ such that 
\[
C(s-\tau)^{-2}\|u\|_{\mathscr{C}^{\al} (\Omega \times [\tau,0])}\le \frac{1}{2} \|u\|_{\mathscr{C}^{2+\al} (\Omega \times [\tau,0])} + C(s-\tau)^{-\beta}\|u\|_{L^q (\Omega \times [\tau,0])}.
\]
Hence, we have 
\[
\|u\|_{\mathscr{C}^{2+\al}(\overline \om\times [s,0])}\le  \frac{1}{2} \|u\|_{\mathscr{C}^{2+\al} (\Omega \times [\tau,0])} + C(s-\tau)^{-\beta}\|u\|_{L^q (\Omega \times [\tau,0])}+ C\|f\|_{\mathscr{C}^{\al} (\Omega \times [\tau,0])}.
\]
It follows from an iterative lemma, e.g., Lemma 1.1 in Giaquinta-Giusti \cite{GG}, that
\[
\|u\|_{\mathscr{C}^{2+\al}(\overline \om\times [s,0])}\le   C(s-\tau)^{-\beta}(\|u\|_{L^q (\Omega \times [\tau,0])}+ \|f\|_{\mathscr{C}^{\al} (\Omega \times [\tau,0])}),
\]
from which the conclusion follows.
\end{proof}

Consequently, they obtained a  short time existence result with compatible  initial data.

\begin{thm}[\cite{KimLee}]\label{thm:short-time} 
Let $p\in (0,1)$ and $v_0\in C^{1}(\overline \om)\cap C^{2}(\om)$ satisfy 
\begin{equation}\label{eq:essentialinitial}
\frac{1}{c_0}\leq \inf_{\Omega}\frac{v_0}{d}\le \sup_{\Omega}\frac{v_0}{d}\leq c_0
\end{equation}
for some constant $c_0>0$. Suppose 
\[d^{1-p} D^2 v_0\in C^\alpha(\Omega)\]
for some $\alpha>0$.  Then there exist a small $T>0$ and a unique positive function $v\in \mathscr{C}^{2+\al}(\overline \om\times [0,T])$ satisfying that
\[
pv^{p-1}\pa_t v =\Delta v \quad \mbox{in }\om\times [0,T],
\]
\[
v(\cdot,0)=v_0, \quad v=0 \quad \mbox{on }\pa \om \times [0,T].
\]
\end{thm}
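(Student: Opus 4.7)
My plan is to prove Theorem \ref{thm:short-time} by a standard short-time fixed point argument based on the weighted Schauder theory (Theorems \ref{thm:global-schauder} and \ref{thm:globalxlocalt}). The starting observation is that writing $v = v_0 + \tilde v$ reduces the problem to finding $\tilde v$ with $\tilde v(\cdot,0)=0$ and $\tilde v=0$ on $\partial\Omega\times[0,T]$ solving
\[
\partial_t \tilde v \;-\; \frac{(v_0+\tilde v)^{1-p}}{p}\,\Delta \tilde v \;=\; \frac{(v_0+\tilde v)^{1-p}}{p}\,\Delta v_0 \quad\text{in }\Omega\times(0,T].
\]
Using the nondegeneracy hypothesis \eqref{eq:essentialinitial} and choosing $\tilde v$ small, we have $v_0+\tilde v\asymp d\asymp\omega$, so the coefficient of $\Delta\tilde v$ is of the form $\omega^{1-p}a(x,t)$ with $a$ bounded and bounded away from zero, i.e., exactly the degenerate structure \eqref{eq:general}--\eqref{eq:ellip} covered by the Schauder theory.

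The fixed point map is defined as follows. Let
\[
\mathcal X_{T,K}=\bigl\{w\in\mathscr{C}^{2+\alpha}(\overline\Omega\times[0,T]):\ w(\cdot,0)=0,\ w|_{\partial\Omega}=0,\ \|w\|_{\mathscr{C}^{2+\alpha}}\le K\bigr\}.
\]
For $\tilde w\in\mathcal X_{T,K}$, define $\Phi(\tilde w)$ as the solution $\tilde v$ of the linear problem obtained by freezing the nonlinear coefficient at $\tilde w$:
\[
\partial_t \tilde v - \frac{(v_0+\tilde w)^{1-p}}{p}\Delta \tilde v = \frac{(v_0+\tilde w)^{1-p}}{p}\Delta v_0,\qquad \tilde v(\cdot,0)=0,\ \tilde v|_{\partial\Omega}=0.
\]
One checks, using $v_0\in C^1(\overline\Omega)$ with $d^{1-p}D^2 v_0\in C^\alpha(\Omega)$ and the membership $\tilde w\in\mathscr{C}^{2+\alpha}$, that both the coefficient $\tfrac1p\omega^{-(1-p)}(v_0+\tilde w)^{1-p}$ and the forcing $\tfrac1p(v_0+\tilde w)^{1-p}\Delta v_0$ lie in the relevant $\mathscr{C}^\alpha$ class with norms bounded in terms of $K$ and the data. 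Theorem \ref{thm:global-schauder} then provides a unique $\tilde v=\Phi(\tilde w)\in\mathscr{C}^{2+\alpha}(\overline\Omega\times[0,T])$ with a quantitative estimate depending only on the data and $K$, showing $\Phi:\mathcal X_{T,K}\to\mathcal X_{T,K}$ once $K$ is fixed large and $T$ small.

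To finish, I would show $\Phi$ is a contraction on $\mathcal X_{T,K}$ for $T$ small. For two inputs $\tilde w_1,\tilde w_2$, the difference $\tilde v_1-\tilde v_2$ solves a linear equation in the same class, whose right-hand side is proportional to $\tilde w_1-\tilde w_2$ and its derivatives, and vanishes at $t=0$. Applying Theorem \ref{thm:global-schauder} once more, together with the elementary interpolation bound $\|w\|_{\mathscr{C}^{\alpha}(\overline\Omega\times[0,T])}\le CT^{\delta}\|w\|_{\mathscr{C}^{2+\alpha}(\overline\Omega\times[0,T])}$ available because $w(\cdot,0)=0$, yields $\|\tilde v_1-\tilde v_2\|_{\mathscr{C}^{2+\alpha}}\le CT^{\delta}\|\tilde w_1-\tilde w_2\|_{\mathscr{C}^{2+\alpha}}$ with $\delta>0$. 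Banach's fixed point theorem then produces a unique $\tilde v\in\mathcal X_{T,K}$, and $v:=v_0+\tilde v$ is the required solution; positivity $v>0$ in $\Omega$ follows from \eqref{eq:essentialinitial} plus the $C^0$-smallness of $\tilde v$ on $[0,T]$.

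The main obstacle in making this outline rigorous is the compatibility condition at $t=0$ required by Theorem \ref{thm:global-schauder}, namely that the forcing vanish on $\partial\Omega$ at the initial time. The forcing at $t=0$ is $\tfrac{1}{p}v_0^{1-p}\Delta v_0$, and although $v_0^{1-p}\to 0$ on $\partial\Omega$, the factor $\Delta v_0$ can blow up like $d^{p-1}$, so the product is merely bounded. To bypass this I would first subtract a linear-in-time correction by writing $v=v_0+t\,\Psi_0+w$ with $\Psi_0=\tfrac{1}{p}v_0^{1-p}\Delta v_0$; the new unknown $w$ satisfies an equation whose forcing now vanishes at $t=0$ on $\partial\Omega$ (by construction), and whose coefficient still has the $\omega^{1-p}$ degeneracy. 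Running the contraction argument above for $w$ then gives the conclusion, with the regularity of $\Psi_0$ (guaranteed by the hypothesis $d^{1-p}D^2 v_0\in C^\alpha$) ensuring all required $\mathscr{C}^\alpha$ bounds.
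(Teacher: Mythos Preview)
The paper does not prove Theorem~\ref{thm:short-time}: it is quoted verbatim from Kim--Lee \cite{KimLee}, introduced by the sentence ``Consequently, they obtained a short time existence result with compatible initial data,'' and no argument is given. So there is nothing in the paper to compare your proposal to; your sketch is in fact an attempt to reconstruct the Kim--Lee proof, and the overall architecture (freeze coefficients, apply the weighted Schauder estimate, contract for small $T$) is indeed the standard one.

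That said, there is a genuine gap in your handling of the compatibility obstacle. You correctly observe that the forcing $\Psi_0:=\tfrac{1}{p}v_0^{1-p}\Delta v_0$ is merely bounded on $\partial\Omega$ (since $v_0^{1-p}\asymp d^{1-p}$ while $\Delta v_0$ may blow up like $d^{p-1}$), so the compatibility condition $f|_{t=0}=0$ on $\partial\Omega$ in Theorem~\ref{thm:global-schauder} fails. Your proposed fix, writing $v=v_0+t\Psi_0+w$, does make the new forcing vanish at $t=0$ throughout $\overline\Omega$. But it simultaneously destroys the homogeneous Dirichlet condition: since $\Psi_0$ does \emph{not} vanish on $\partial\Omega$, the boundary condition $v=0$ on $\partial\Omega$ forces $w=-t\Psi_0\neq 0$ there, and Theorem~\ref{thm:global-schauder} no longer applies to $w$. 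These two requirements --- that the correction match $\Psi_0$ on $\partial\Omega$ at $t=0$ (to restore compatibility) and that it vanish on $\partial\Omega$ for all $t$ (to preserve the boundary condition) --- are in direct conflict unless $\Psi_0|_{\partial\Omega}=0$, which you have just argued is not the case.

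To repair this you would need either to lift the inhomogeneous boundary data $-t\Psi_0$ by a suitable $\mathscr{C}^{2+\alpha}$ extension (which in turn requires enough regularity of $\Psi_0$, not obviously available from the hypotheses), or to revisit the linear theory and check whether the compatibility condition in Theorem~\ref{thm:global-schauder} can be relaxed to $f(\cdot,0)\in\mathscr{C}^\alpha$ with the solution merely losing the $\mathscr{C}^{2+\alpha}$ control at $t=0$. Kim--Lee handle this in \cite{KimLee}; since the present paper only quotes their result, the details are not reproduced here.
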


The condition \eqref{eq:essentialinitial} is essential in obtaining this short time existence result.

Now we would like to bootstrap the $\mathscr{C}^{2+\al}$ solutions in Theorem \ref{thm:short-time} to reach their optimal regularity and obtain a desired uniform estimate  for them. The following lemma will be used.
\begin{lem}\label{lem:ellipticestimate}
Let $p\in(0,1)$. Suppose $u\in C^1(\overline\Omega)\cap C^2(\Omega)$ is a solution of 
\begin{align*}
-\Delta u(x)&=c(x) d(x)^{p-1}\quad\mbox{in }\Omega,\\
u(x)&=0\quad\mbox{on }\partial\Omega,
\end{align*}
where $c\in C^0(\overline\Omega)$. Then there exist $\alpha_0>0$ and $C>0$, both of which depend only on $n,p$ and $\Omega$, such that
\[
\|u\|_{C^{1+\alpha_0}(\overline\Omega)}\le C \|c\|_{L^\infty(\Omega)}.
\]
\end{lem}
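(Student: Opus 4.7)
My plan is to combine a boundary barrier bound with interior rescaled Schauder estimates, and then promote the resulting scale-by-scale local Hölder bounds to a uniform $C^{1+\alpha_0}$ estimate up to $\partial\Omega$ via a boundary improvement-of-flatness iteration.

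\textbf{Step 1 (Boundary Lipschitz bound).} I would first show $\|u\|_{L^\infty(\Omega)} \le C\|c\|_\infty$ and $|u(x)| \le C\|c\|_\infty d(x)$ in $\overline\Omega$. The $L^\infty$ bound is standard by Moser iteration whenever $n/2 < 1/(1-p)$, and more generally it follows from the Green's function representation
\[
|u(x)| \le \|c\|_\infty \int_\Omega G(x,y)\, d(y)^{p-1}\,dy,
\]
whose finiteness for all $p \in (0,1)$ is a direct convergence check near boundary points. For the linear bound, in a tubular neighborhood $\{d < d_0\}$ the distance function $d$ is $C^2$ with $|\nabla d| = 1$ and $\Delta d$ bounded, so the barrier $\Phi = A d - B d^{1+p}$ satisfies
\[
-\Delta \Phi = B p(1+p)\, d^{p-1} - A \Delta d + O(d^p).
\]
Choosing $B$ of order $\|c\|_\infty$, $d_0$ small so the $d^{p-1}$ term dominates, and $A$ large enough so that $\Phi \ge \|u\|_\infty$ on $\{d = d_0\}$, the function $\Phi$ becomes a super-solution dominating $|u|$ on $\partial\{d<d_0\}$. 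Applying the maximum principle to $\pm u$ gives $|u|\le \Phi \le A d \le C\|c\|_\infty d$.

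\textbf{Step 2 (Interior rescaling).} For $x_0 \in \Omega$ with $d(x_0) = 2r$, define $\tilde u(y) := u(x_0 + ry)/r$ on $B_1$. Step~1 gives $\|\tilde u\|_{L^\infty(B_1)} \le C\|c\|_\infty$, and from
\[
-\Delta \tilde u(y) = r\, c(x_0+ry)\, d(x_0+ry)^{p-1}
\]
together with $d(x_0+ry) \in [r, 3r]$ on $B_1$, we get $\|\Delta \tilde u\|_{L^\infty(B_1)} \le C\|c\|_\infty r^p$, uniformly bounded for $r \le 1$. Interior $C^{1,\alpha}$ estimates for the Poisson equation then yield $\|\tilde u\|_{C^{1,\alpha}(B_{1/2})} \le C\|c\|_\infty$ for any $\alpha \in (0,1)$, and in particular $|\nabla u(x_0)| \le C\|c\|_\infty$ uniformly in $x_0 \in \Omega$.

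\textbf{Step 3 (Main obstacle: uniform Hölder up to $\partial\Omega$).} Rescaling back, Step~2 yields only $[\nabla u]_{C^\alpha(B_{r/2}(x_0))} \le C\|c\|_\infty r^{-\alpha}$, which blows up as $r\to 0$. To recover a uniform bound I would implement an improvement-of-flatness iteration at each $x_0 \in \partial\Omega$. A compactness argument shows that subsequential limits of the rescalings $y \mapsto u(x_0 + ry)/r$ solve the homogeneous problem (since the rescaled right-hand side is bounded by $r^p \to 0$) on the flat half-space with zero Dirichlet data on the boundary; bounded harmonic functions with this property are linear in the normal variable. Iterating this dichotomy with a geometric decay produces $\alpha_0 > 0$ and $A(x_0)$ with
\[
|u(x) - A(x_0)\,(x-x_0)\cdot \nu(x_0)| \le C\|c\|_\infty\, |x-x_0|^{1+\alpha_0}
\quad\text{on } B_r(x_0) \cap \Omega,
\]
uniformly in $r$ small. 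Substituting this sharpened bound back into the interior rescaling of Step~2 (with the linear term subtracted, so $\|\tilde u - A y_n\|_{L^\infty}$ gains a factor $r^{\alpha_0}$) produces a uniform Hölder estimate on $\nabla u$ in a neighborhood of $\partial\Omega$. A standard case split on whether $|x-y| \lessgtr d(x)/4$ then combines this with Step~2 to yield the global bound $\|u\|_{C^{1+\alpha_0}(\overline\Omega)} \le C\|c\|_\infty$, with $\alpha_0 \le p$ as predicted by the explicit half-space solutions of $-\Delta u = x_n^{p-1}$, which are $C^{1,p}$ and no better.
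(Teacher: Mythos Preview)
Your approach is correct and genuinely different from the paper's. The paper disposes of this lemma in one line: represent $u$ via the Green's function,
\[
u(x)=\int_\Omega G(x,y)\,c(y)\,d(y)^{p-1}\,dy,
\]
differentiate in $x$, and invoke the pointwise H\"older estimates for $\nabla_x G$ near $\partial\Omega$ from Gr\"uter--Widman (their Theorem~3.5); the integral then yields $\nabla u\in C^{\alpha_0}(\overline\Omega)$ by a direct splitting computation. Your route---barrier for the linear boundary bound, interior $C^{1,\alpha}$ by rescaling, then a boundary improvement-of-flatness iteration driven by the fact that the rescaled right-hand side is $O(r^p)\to 0$---is the PDE alternative to this potential-theoretic shortcut. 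The Green's function argument is shorter when those gradient bounds are on the shelf; your argument is more self-contained and transplants more easily to variable-coefficient operators for which sharp Green's function estimates are not already written down.

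One phrasing to tighten in Step~3: in the iteration you are working on a fixed half-ball $B_1^+$, and the relevant input is the \emph{boundary $C^{1,\alpha}$ regularity} of harmonic functions vanishing on the flat piece, not a Liouville theorem on the full half-space. The improvement step is: if $\|u-\ell\|_{L^\infty(B_1^+)}\le 1$ and the rescaled Laplacian is small, then some linear $\ell'$ satisfies $\|u-\ell'\|_{L^\infty(B_\rho^+)}\le \rho^{1+\alpha_0}$, proved by compactness against harmonic limits on $B_1^+$. Iterating this for $\alpha_0<p$ (so that the rescaled forcing $O(r^{p-\alpha_0})$ stays negligible) gives exactly the expansion you wrote, and the subsequent combination with Step~2 via the standard $|x-y|\lessgtr d(x)/4$ dichotomy is correct.
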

\begin{proof}
It follows from the H\"older gradient estimates of the Green's functions near $\partial\Omega$ (see, e.g., Theorem 3.5 of Gr\"uter-Widman \cite{GW}) and elementary calculations.
\end{proof}

The estimate \eqref{eq:mainregularityestimate} in the below is the main contribution of this paper.
\begin{thm} \label{thm:bootstrap} 
Let $p\in (0,1)$, $\alpha\in (0,1)$ and $T>0$. Let  $v\in \mathscr{C}^{2+\al}(\overline \om\times [0,T])$ be a positive solution of
\begin{align*}
pv^{p-1}\pa_t v &=\Delta v \quad \mbox{in }\om\times [0,T]
\end{align*}
satisfying
\begin{align}\label{eq:lipassumption}
\frac{1}{c_0} d(x)\le v(x,t)\le c_0 d(x)\quad\mbox{in }\Omega\times[0,T]
\end{align}
for some constant $c_0>0$. Then $v(x,\cdot)\in C^\infty((0,T])$ for every $x\in\overline\Omega$, and
\[
\partial_t^\ell v(\cdot,t) \in C^{2+p}(\overline\Omega)\quad\mbox{for all }t\in(0,T]\mbox{ and }\ell\in\mathbb{Z}^+\cup\{0\}.
\]
Moreover, there exists $C>0$ depending only on $n,\Omega,T,p,\ell$ and $c_0$ such that
\begin{equation}\label{eq:mainregularityestimate}
\sup_{t\in[T/2,T]}\|\partial_t^\ell v(\cdot,t) \|_{C^{2+p}(\overline\Omega)}\le C.
\end{equation}
\end{thm}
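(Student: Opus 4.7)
The plan is to bootstrap in the time variable by differentiating the equation and applying the Schauder estimates of Theorem~\ref{thm:globalxlocalt}, and then to upgrade the spatial regularity from $\mathscr{C}^{2+\alpha}$ to the optimal $C^{2+p}(\overline\Omega)$ by viewing the equation at each fixed time as a Poisson problem with Hölder right-hand side.

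\textbf{Step 1 (Time bootstrap).} Setting $w=\partial_t v$ and differentiating $pv^{p-1}\partial_t v=\Delta v$ in $t$ yields
\[
\partial_t w - \omega^{1-p}\,\tilde a(x,t)\,\Delta w + \tilde c(x,t)\,w = 0 \quad\text{in }\Omega\times(0,T],
\]
with $\tilde a=\tfrac{1}{p}(v/\omega)^{1-p}$ and $\tilde c=-(1-p)\,v_t/v$. Hypothesis \eqref{eq:lipassumption} gives $v/\omega$ bounded above and below, so the scalar matrix $\tilde a\,\delta_{ij}$ is uniformly elliptic in the sense of \eqref{eq:ellip}. The assumption $v\in\mathscr{C}^{2+\alpha}(\overline\Omega\times[0,T])$, together with the fact that $v_t$ vanishes linearly in $d$ on $\partial\Omega$ (readable from the equation itself), gives $\tilde a,\tilde c\in\mathscr{C}^{\alpha}$. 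Theorem~\ref{thm:globalxlocalt} applied to $w$ with $q=2$ then yields $\partial_t v\in\mathscr{C}^{2+\alpha}(\overline\Omega\times[T/2,T])$. Differentiating $\ell$ times produces a linear equation for $\partial_t^\ell v$ of the form \eqref{eq:general} whose coefficients and forcing are built from $v,\partial_t v,\dots,\partial_t^{\ell-1}v$; by induction these lie in $\mathscr{C}^{\alpha}$, and iterating Theorem~\ref{thm:globalxlocalt} on nested time intervals shrinking down to $[T/2,T]$ delivers
\[
\|\partial_t^\ell v\|_{\mathscr{C}^{2+\alpha}(\overline\Omega\times[T/2,T])}\le C_\ell \qquad\text{for every }\ell\in\mathbb{N}.
\]

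\textbf{Step 2 (Optimal spatial regularity).} Fix $\ell\ge 0$ and $t\in[T/2,T]$. The equation rewrites as
\[
-\Delta(\partial_t^\ell v) \;=\; -p\,v^{p-1}\,\partial_t^{\ell+1}v \,+\, \mathcal{E}_\ell,
\]
where $\mathcal{E}_\ell$ collects the lower-order products arising from differentiating the factor $v^{p-1}$. We factor
\[
v^{p-1}\,\partial_t^{\ell+1}v \;=\;(v/d)^{p-1}\cdot (\partial_t^{\ell+1}v/d)\cdot d^{p},
\]
and note that $(v/d)^{p-1}$ and $\partial_t^{\ell+1}v/d$ are bounded and (standard) Hölder continuous on $\overline\Omega$ by Step~1, while $d^{p}\in C^{p}(\overline\Omega)$; an analogous factorization handles $\mathcal{E}_\ell$. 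Hence $-\Delta(\partial_t^\ell v)\in C^{p}(\overline\Omega)$ with norm uniformly bounded in $t$, and classical boundary Schauder theory for $-\Delta u=f$ with zero Dirichlet data on the smooth $\partial\Omega$ yields $\partial_t^\ell v(\cdot,t)\in C^{2+p}(\overline\Omega)$ together with the bound \eqref{eq:mainregularityestimate}. Smoothness in $t$ at each fixed $x\in\overline\Omega$ is then immediate from the existence of all $\partial_t^\ell v$ as continuous functions on $\overline\Omega\times[T/2,T]$.

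\textbf{Main obstacle.} The technical heart is Step~1: one must verify inductively that the coefficients produced by repeated time-differentiation stay in the weighted Hölder class $\mathscr{C}^{\alpha}$. Each differentiation of the factor $v^{p-1}$ creates a more singular power $v^{p-k}$, and the induction only closes because the equation structurally forces every $\partial_t^k v$ to vanish on $\partial\Omega$ at the linear rate in $d$. This linear vanishing compensates the singular boundary behavior of the negative powers of $v$ and, crucially, also delivers the extra factors of $d$ that make the right-hand side in Step~2 genuinely lie in $C^{p}(\overline\Omega)$ rather than merely in a weighted space.
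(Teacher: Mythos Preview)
Your argument has two genuine gaps, one technical and one fundamental.

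\textbf{The coefficient $\tilde c$ is not in $\mathscr{C}^\alpha$.} In Step~1 you need $\tilde c=-(1-p)v_t/v$ to be bounded and H\"older in order to apply Theorem~\ref{thm:globalxlocalt}. This amounts to $v_t/d$ being bounded. But from $v\in\mathscr{C}^{2+\alpha}$ you only know $v_t\in\mathscr{C}^\alpha$ and $v_t=0$ on $\partial\Omega$, which gives $|v_t|\le C\,d^{(1+p)\alpha/2}$; since $(1+p)\alpha/2<1$, the quotient $v_t/d$ blows up at the boundary. Your parenthetical claim that linear vanishing of $v_t$ is ``readable from the equation itself'' is exactly what is not available: writing $v_t=\tfrac1p(v/d)^{1-p}\cdot d^{1-p}\Delta v$ shows $v_t$ is H\"older, but $d^{1-p}\Delta v$ need not vanish on $\partial\Omega$. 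The paper handles this by working with fractional-power time difference quotients $v^h_\lambda$, using the elementary elliptic estimate of Lemma~\ref{lem:ellipticestimate} (which only needs the right-hand side bounded, not H\"older) to first get $v^h_\lambda/d\in\mathscr{C}^\gamma$, and only then applying Schauder; the exponent $\lambda$ is climbed from $\alpha/2$ to $1$ in finitely many steps.

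\textbf{The constant in \eqref{eq:mainregularityestimate} must depend only on $c_0$.} This is the heart of the theorem, and your proof misses it entirely. Every Schauder application in your Step~1 produces constants depending on $\|\tilde a\|_{\mathscr{C}^\alpha}$ and hence on $\|v/d\|_{\mathscr{C}^\alpha}$, which in turn depends on the assumed $\mathscr{C}^{2+\alpha}$ norm of $v$---a quantity you have no control of in terms of $c_0$ alone. The paper closes this loop in its Step~3: once $v/d$ is known to be Lipschitz (qualitatively, from Step~2), one rewrites the equation for $w=v/G$ (with $G$ a Green's function) as $G^{p+1}\partial_t w^p=\mathrm{div}(G^2\nabla w)$ and invokes the De~Giorgi--type estimate of Theorem~\ref{thm:holdernearboundary}. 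That theorem gives $\|v/d\|_{\mathscr{C}^\beta}\le C(n,p,\Omega,T,c_0)$ for some $\beta>0$ depending only on the structural constants, \emph{independently} of any initial regularity norm. One then reruns the bootstrap with $\alpha$ replaced by $\beta$, and now all constants depend only on $c_0$. Without this step (which is the content of Section~\ref{sec:holdergradient} and is described in the introduction as the main new ingredient), your argument proves only a qualitative regularity statement, not the quantitative estimate \eqref{eq:mainregularityestimate}.
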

\begin{proof}
Step 1. Since  $v\in \mathscr{C}^{2+\al}(\overline \om\times [0,T])$, we have
\[
\frac{v}{d}\in \mathscr{C}^{\al}(\overline \om\times [0,T]).
\]
Suppose
\be \label{eq:step0}
\left\|\frac{v}{d}\right\|_{\mathscr{C}^{\al}(\om \times [T/10, T])}\le M. 
\ee
By the Schauder estimates  in Theorem \ref{thm:globalxlocalt}, we have 
\[
\|v\|_{\mathscr{C}^{2+\al} (\overline\Omega \times [T/9,T])} \le C,
\]
where $C>0$ depends only on $n,p,\Omega,T, c_0, \alpha$ and the $M$ in \eqref{eq:step0}.

\medskip

Step 2. We claim that there exists $\gamma>0$ depending only on $n,p,\Omega$ and $\alpha$, and there exists $C>0$ depending only on $n,p,\Omega,T,c_0, \alpha$ and the $M$ in \eqref{eq:step0} such that
\begin{equation}\label{eq:vtregularity}
\|\partial_t v\|_{\mathscr{C}^{2+\gamma} (\overline\Omega \times [T/5,T])}\le C(n,p,\Omega,T,c_0,\alpha, M).
\end{equation}

Indeed, for $\lda\in (0,1]$ and arbitrarily  small positive number  $0<h<\frac{T}{100}$, we define 
\[
v^h_\lda(x,t)=\frac{v(x,t)-v(x,t-h)}{h^\lda}.
\]
By the equation of $v$, 
\be \label{eq:time-diff-quotient}
p d^{p-1} a \pa_t v_{\lda}^h =\Delta v_{\lda}^h +  d^{p-1}f \frac{v_{\lda}^h}{d}  \quad \mbox{in } \om \times (T/100,T],
\ee
where $a(x,t)=(\frac{v(x,t)}{d(x)})^{p-1}$ and
\begin{align*}
f(x,t)&= -d(x)^{2-p} \pa_t v(x,t-h) \int_0^1 \frac{\ud }{\ud s}[s v(x,t)+(1-s ) v(x,t-h)]^{p-1} \,\ud s     \\&
= -(p-1) \pa_t v(x,t-h)\int_0^1 \left[s \frac{v(x,t)}{d}+(1-s ) \frac{v(x,t-h)}{d}\right]^{p-2} \,\ud s.  
\end{align*}
By \eqref{eq:lipassumption}, $\frac{1}{c_0}\le a\le c_0$. This together with \eqref{eq:step0} implies that  \[
\|a\|_{\mathscr{C}^{\al} (\overline\Omega \times [T/8,T])} + \|f\|_{\mathscr{C}^{\al} (\overline\Omega \times [T/8,T])}\le C(n,p,\Omega,T,c_0,\alpha, M).
\]

Let $\alpha_0$ be the one in Lemma \ref{lem:ellipticestimate}. Let $\gamma=\min(\alpha,\alpha_0)$.
Set ${\lda_k}=\frac{(k+1)\gamma}{2}$ if $k<\frac{2}{\gamma}-1$ and $\lda_k=1$ if $k\ge \frac{2}{\gamma}-1$. By Step 1 and Taylor expansion calculations, we have 
\be \label{eq:dq-1}
|\pa_t v_{\lda_0}^h |+\left|\frac{v^h_{\lda_0}} d\right|\le C(n,p,\Omega,T,c_0,\alpha, M) \quad \mbox{in } \om\times [T/8, T]. 
\ee
Using \eqref{eq:dq-1} and applying Lemma \ref{lem:ellipticestimate} to \eqref{eq:time-diff-quotient} on each time slice, we have 
\[
\sup_{T/8\le t\le T}\|v_{\lda_0}^h (\cdot, t)\|_{C^{1,\gamma}(\om)} \le C(n,p,\Omega,T,\alpha, c_0,M).
\]
Combing with $|\pa_t v_{\lda_0}^h |\le C$ in \eqref{eq:dq-1}, it follows from a calculus lemma, Lemma 3.1 on page 78 in \cite{LSU} (cf. Lemma B.3 in \cite{JX19}), we have 
\[
|\nabla v_{\lda_0}^h(x,t) -\nabla v_{\lda_0}^h(y,s)| \le C(|x-y|^2+|t-s|)^{\frac{\gamma}{2}}, \quad \forall~ (x,t), (y,s)\in \om\times [T/8,T]. 
\]
Then we have
\[
\left\|\frac{v_{\lda_0}^h}{d}\right\|_{\mathscr{C}^{\gamma}(\overline\om\times [T/8,T])}\le C(n,p,\Omega,T,c_0,\alpha, M). 
\]
Applying the Schauder estimates in Theorem \ref{thm:globalxlocalt} to \eqref{eq:time-diff-quotient}, we then conclude that 
\be\label{eq:dq-2}
\|v_{\lda_0}^h\|_{\mathscr{C}^{2+\gamma} (\overline\Omega \times [T/7,T])}  \le C(n,p,\Omega,T,c_0,\alpha, M).
\ee
It follows that 
\[
|\pa_t v_{\lda_1}^h |+\left|\frac{v^h_{\lda_1}} d\right|\le C(n,p,\Omega,T,c_0,\alpha, M) \quad \mbox{in } \om\times [T/6, T].
\] 
Then one can repeat the above argument in finitely many steps to obtain \eqref{eq:vtregularity}. By applying elliptic Schauder estimates to the equation of $v$ on each time slice, we have that $D^2 v(\cdot,t)$ are H\"older continuous in $\overline\Omega$ for every $t\in [T/5,T]$. Consequently, $v/d$ is Lipschitz continuous on $\overline\Omega\times[T/5,T]$.
\medskip

Step 3: We show that there exist $\beta>0$ and $C>0$, both of which depend only on  $n$, $\om$, $T$, $p$ and $c_0$    such that
\be \label{eq:step1-a}
\left\|\frac{v}{d}\right\|_{\mathscr{C}^{\beta}(\om \times [T/4, T])}\le C(n,p,\Omega,T,c_0). 
\ee

Indeed, by \eqref{eq:lipassumption} and the H\"older regularity theory of linear uniformly parabolic equations, we only need to show the H\"older estimation of $v/d$ near the lateral boundary.  We  pick  a point  $x_0\in \om$ staying far away from the boundary $\pa \om$ and let  $G$ be the Green's function centered at $x_0$, i.e., 
\[
-\Delta G= \delta_{x_0} \quad \mbox{in }\om, \quad G=0 \quad \mbox{on }\pa \om. 
\] Then $\frac{1}{c_1}\le G/d\le c_1$ and $G/d$ is smooth in $\om_{\rho}:=\{x\in \om: d(x)<\rho\}$ for some constants $\rho< \frac{1}{2} d(x_0)$ and $c_1\ge 1$, both of which depend only on $\Omega$ and $n$.  Let
\[
w:=\frac{v}{G}.
\]
From Step 2, we know that $w$ is Lipschitz continuous on $\overline\Omega\times[T/5,T]$.
 Then it is elementary to check that 
\[
G^{p+1} \pa_t w^p =\mathrm{div}(G^2 \nabla w) \quad \mbox{in }\om_{\rho} \times [T/5,T]. 
\]
By straightening out the boundary $\partial\Omega$, and using the assumption \eqref{eq:lipassumption} and Theorem \ref{thm:holdernearboundary}, we have  
$$\| w\|_{\mathscr{C}^{\beta}(\om_{\rho/2} \times [T/4,T])} \le C$$  
for some $C>0$ depending only  $n$, $\om$, $T$, $p$, $b$, $c_0$ and $c_1$. Therefore, \eqref{eq:step1-a} follows. 

Step 4. Repeating steps 1-3 and replacing $\alpha$ by $\beta$, we can conclude that there exist $\beta>0$ and $C>0$, both of which depend only on $n,p,\Omega,T$ and $c_0$ such that
\[
\|v\|_{\mathscr{C}^{2+\beta} (\overline\Omega \times [T/3,T])}+\|\partial_t v\|_{\mathscr{C}^{2+\beta} (\overline\Omega \times [T/3,T])}\le C.
\]
By keeping differentiating the equation in the time variable, we have for every $\ell=1,2,\cdots$ that 
\[
\|\partial_t^\ell v\|_{\mathscr{C}^{2+\beta} (\overline\Omega \times [T/2,T])}\le C,
\]
where $C>0$ depends only on $n,p,\Omega,T, c_0$ and $\ell$. Using this estimate for $\partial_t^{\ell+2} v$ and applying the elliptic Schauder estimate to the equation of $\partial_t^{\ell+1} v$ on each time slice, we have for every $t\in[T/2,T]$ that 
\[
\|\partial_t^{\ell+1} v(\cdot,t)\|_{C^2(\overline\Omega)}\le C(n,p,\Omega,T, c_0,\ell).
\]
Applying again the elliptic Schauder estimate to the equation of $\partial_t^{\ell} v$ on each time slice, we have for every $t\in[T/2,T]$ that 
\[
\|\partial_t^{\ell} v(\cdot,t)\|_{C^{2+p}(\overline\Omega)}\le C(n,p,\Omega,T, c_0,\ell).
\]
Therefore, the proof is concluded.
\end{proof}

\begin{rem}\label{rem:c1alpha}
In the proof of Theorem \ref{thm:bootstrap}, we only used the estimate for  $\|u_t\|_{\mathscr{C}^{\al}(\overline \om\times [0,T])}+\|\nabla u\|_{\mathscr{C}^{\al}(\overline \om\times [0,T])}$ in Theorem \ref{thm:global-schauder} and Theorem \ref{thm:globalxlocalt}. The estimate for $\|d^{1-p} D^2u\|_{\mathscr{C}^{\al}(\overline \om\times [0,T])}$ is not used. 
\end{rem}

The regularity estimate \eqref{eq:mainregularityestimate} will imply the long time existence of regular solutions with compatible initial data.
\begin{thm} \label{thm:long-time0} 
Let $p\in (0,1)$ and $v_0\in C^{1}(\overline \om)\cap C^{2}(\om)$ satisfy \eqref{eq:essentialinitial} and $d^{1-p} D^2 v_0\in C^\alpha(\overline\Omega)$.  Then there exists a unique positive function $v\in \mathscr{C}^{2+\al}(\overline \om\times [0,+\infty))$ satisfying that
\[
pv^{p-1}\pa_t v =\Delta v \quad \mbox{in }\om\times [0,+\infty),
\]
\[
v(\cdot,0)=v_0, \quad v=0 \quad \mbox{on }\pa \om \times [0,+\infty),
\]
and  there exists $C>0$ depending only on $n,m,\Omega$ and $c_0$ such that
\begin{equation}\label{eq:decayinv}
\frac{1}{C} (1+t)^{\frac{1}{p-1}}S^{\frac{1}{p}}(x)\le v(x,t)\le C (1+t)^{\frac{1}{p-1}}S^\frac{1}{p}(x)\quad\mbox{on }\overline\Omega\times[0,+\infty),
\end{equation}
where $S$ is the unique solution of \eqref{eq:stationary} with $m=1/p$. Moreover, $v(x,\cdot)\in C^\infty((0,+\infty))$ for every $x\in\overline\Omega$, 
\[
\partial_t^\ell v(\cdot,t) \in C^{2+p}(\overline\Omega)\quad\mbox{for all }t\in(0,+\infty)\mbox{ and }\ell\in\mathbb{Z}^+\cup\{0\}.
\]
\end{thm}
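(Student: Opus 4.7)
The plan is to reduce the equation to the porous medium equation via the substitution $u := v^{1/m}$ with $m := 1/p \in (1,\infty)$, which transforms $pv^{p-1}\partial_t v = \Delta v$ into $\partial_t u = \Delta(u^m)$, and then combine the short-time existence from Theorem~\ref{thm:short-time}, the a priori regularity estimate \eqref{eq:mainregularityestimate} from Theorem~\ref{thm:bootstrap}, and a comparison argument with shifted friendly giants to build a global classical solution.

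First I would establish the two-sided bound \eqref{eq:decayinv}. Since $S \asymp d^{1/m}$, the assumption \eqref{eq:essentialinitial} gives $u_0 = v_0^{1/m} \asymp S$, so there exist $0 < s_1 \le s_2 < \infty$ with $U_{s_2}(\cdot, 0) \le u_0 \le U_{s_1}(\cdot, 0)$, where $U_s(x,t) := (s+t)^{-1/(m-1)}S(x)$ is the shifted friendly giant. Each $U_s$ solves the PME classically, and by the standard PME comparison principle the ordering is preserved for all $t \ge 0$. Raising to the $m$th power and using the identities $1/(p-1) = -m/(m-1)$ and $S^{1/p} = S^m$ yields exactly \eqref{eq:decayinv}.

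Next I would produce a global $\mathscr{C}^{2+\alpha}$-solution by continuation. Theorem~\ref{thm:short-time} supplies a solution on some interval $[0, T_1]$; let $[0, T_{\max})$ be the maximal interval of $\mathscr{C}^{2+\alpha}$ existence. On any compact subinterval of $[0, T_{\max})$, the bound from the previous step gives $v \asymp d$ with a uniform constant, so Theorem~\ref{thm:bootstrap} provides uniform bounds on $\partial_t^\ell v(\cdot, t)$ in $C^{2+p}(\overline\Omega)$ for $t$ bounded away from $0$. In particular, for $t_0 < T_{\max}$ close to $T_{\max}$, $v(\cdot, t_0)$ meets the hypotheses of Theorem~\ref{thm:short-time} with constants independent of $t_0$; applying that theorem at time $t_0$ extends the solution past $T_{\max}$, contradicting maximality. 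Uniqueness of the global solution (hence consistency of the gluing) follows from the $L^1$-contraction property for the PME applied to $u = v^{1/m}$.

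The remaining regularity claims, namely $v(x, \cdot) \in C^\infty((0, +\infty))$ for each $x \in \overline\Omega$ and $\partial_t^\ell v(\cdot, t) \in C^{2+p}(\overline\Omega)$ for every $\ell \ge 0$ and $t > 0$, are immediate from \eqref{eq:mainregularityestimate} applied on intervals $[T/2, T]$ for arbitrary $T > 0$. The main obstacle I anticipate is making precise that the short-time existence time in Theorem~\ref{thm:short-time} is bounded below in terms of the uniform data provided by \eqref{eq:decayinv} and \eqref{eq:mainregularityestimate}; if this dependency is not transparent from the statement of Theorem~\ref{thm:short-time}, one can instead extract a limit $v(\cdot, T_{\max}) \in C^{2+p}(\overline\Omega)$ using \eqref{eq:mainregularityestimate}, verify $d^{1-p}D^2 v(\cdot, T_{\max}) \in C^\beta(\overline\Omega)$ for some $\beta > 0$, and restart Theorem~\ref{thm:short-time} at that limit, which also closes the continuation.
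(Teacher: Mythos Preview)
Your proposal is correct and matches the paper's approach: the paper's proof simply states that \eqref{eq:decayinv} follows from the comparison principle, and that long-time existence and regularity follow by repeatedly applying Theorem~\ref{thm:short-time} and Theorem~\ref{thm:bootstrap}. Your write-up fills in exactly these steps (comparison with shifted friendly giants, continuation via short-time existence plus the bootstrap estimate), and your anticipated obstacle about the uniform lower bound on the short-time existence interval is precisely the point hidden in the word ``repeatedly''; either of your two suggested resolutions is adequate.
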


\begin{proof}
The estimate \eqref{eq:decayinv} follows from the comparison principle. The long time existence and the regularity of the solution are then followed by repeatedly using Theorem \ref{thm:short-time} and Theorem \ref{thm:bootstrap}.
\end{proof}

For a solution $v$ satisfying the decay estimate \eqref{eq:decayinv},  we can obtain the decay estimates for its higher order regularity by a scaling argument.

\begin{thm} \label{thm:uniformestimate} 
Let $p\in (0,1)$, $\alpha\in (0,1)$ and    $v\in \mathscr{C}^{2+\al}(\overline \om\times [0,+\infty))$ be a positive solution of
\begin{align*}
pv^{p-1}\pa_t v &=\Delta v \quad \mbox{in }\om\times [0,+\infty)
\end{align*}
satisfying
\begin{align}\label{eq:lipassumption2}
\frac{1}{c_0} d(x)(1+t)^{\frac{1}{p-1}}\le v(x,t)\le c_0 d(x) (1+t)^{\frac{1}{p-1}}\quad\mbox{in }\Omega\times[0,+\infty)
\end{align}
for some $c_0>0$. Then for every $\delta>0$ and every $\ell\in\mathbb{Z}^+\cup\{0\}$, there exists $C>0$ depending only on $n,\Omega,p,\ell,\delta$ and $c_0$ such that
\begin{equation}\label{eq:higherintest}
\|\partial_t^\ell v(\cdot,t) \|_{C^{2+p}(\overline\Omega)}\le C (1+t)^{\frac{1}{p-1}-\ell}
\end{equation}
for all $t\in[\delta,+\infty)$.
\end{thm}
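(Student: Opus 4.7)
The plan is to reduce to Theorem \ref{thm:bootstrap} by exploiting the scaling symmetry of the equation. Namely, $p v^{p-1}\partial_t v=\Delta v$ is invariant under the two-parameter family of rescalings $v(x,t)\mapsto \mu\,v(x,a+\mu^{1-p}t)$ for any $\mu>0$ and $a\in\R$, and the decay profile \eqref{eq:lipassumption2} matches this scaling with $\mu\asymp(1+t_0)^{1/(1-p)}$. So for each fixed $t_0\ge\delta$ I would rescale $v$ around the time $t_0$ to produce a new solution $\tilde v$ on a \emph{unit} time interval that is comparable to $d(x)$ with constants independent of $t_0$, apply Theorem \ref{thm:bootstrap} to $\tilde v$ with $T=1$, and then unscale.

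Concretely, for $t_0\ge\delta$, I would set $\tau:=t_0/2$, $\mu:=\tau^{1/(1-p)}$, and define
\[
\tilde v(x,s):=\mu\,v\bigl(x,\,\tau(1+s)\bigr),\qquad (x,s)\in\overline\Omega\times[0,1].
\]
A direct computation, using $\mu^{1-p}=\tau$, shows that $p\tilde v^{p-1}\partial_s\tilde v=\Delta\tilde v$ on $\Omega\times[0,1]$. The hypothesis \eqref{eq:lipassumption2} translates to
\[
\tilde v(x,s)\asymp d(x)\left[\frac{\tau}{1+\tau(1+s)}\right]^{1/(1-p)},
\]
and since $\tau\ge\delta/2$, the bracketed quantity takes values in a bounded interval $[c(\delta,p),1]$ with $c(\delta,p)>0$. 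Hence $\tilde v\asymp d$ on $\overline\Omega\times[0,1]$ with constants depending only on $c_0$, $p$, and $\delta$. Since $v\in\mathscr{C}^{2+\alpha}(\overline\Omega\times[0,\infty))$ by hypothesis, we have $\tilde v\in\mathscr{C}^{2+\alpha}(\overline\Omega\times[0,1])$, so Theorem \ref{thm:bootstrap} applied with $T=1$ gives
\[
\|\partial_s^\ell\tilde v(\cdot,1)\|_{C^{2+p}(\overline\Omega)}\le C(n,\Omega,p,\ell,\delta,c_0).
\]

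Unscaling via $\partial_s^\ell\tilde v(x,1)=\mu\,\tau^\ell\,\partial_t^\ell v(x,t_0)$ yields
\[
\|\partial_t^\ell v(\cdot,t_0)\|_{C^{2+p}(\overline\Omega)}\le C\,\mu^{-1}\tau^{-\ell}=C\,(t_0/2)^{1/(p-1)-\ell}.
\]
Writing $\beta:=\ell-1/(p-1)>0$, the elementary inequality $t_0^{-\beta}\le (1+\tfrac{1}{\delta})^{\beta}(1+t_0)^{-\beta}$ (valid for $t_0\ge\delta$) converts this to $C\,(1+t_0)^{1/(p-1)-\ell}$, which is \eqref{eq:higherintest}.

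The only delicate point is the matching of scales: the time shift must be chosen so that $t=\tau(1+s)$ stays comparable to $\tau$ uniformly in $s\in[0,1]$ (so that the decay factor $(1+t)^{1/(p-1)}$ is absorbed cleanly by $\mu=\tau^{1/(1-p)}$), while simultaneously being bounded away from $0$ (so that the bound $\tilde v\asymp d$ from \eqref{eq:lipassumption2} holds throughout the time slab, not just at $s=1$). Taking $\tau=t_0/2$ achieves both, and the rest is a routine application of Theorem \ref{thm:bootstrap} to the rescaled solution.
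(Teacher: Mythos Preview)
Your proof is correct and is essentially the same scaling argument as the paper's. The paper sets $\tilde v(x,t)=t_0^{1/(1-p)}v(x,t_0 t)$ on $\overline\Omega\times[1/2,1]$ and applies Theorem \ref{thm:bootstrap} at $t=1$; your reparametrization $\tilde v(x,s)=(t_0/2)^{1/(1-p)}v(x,(t_0/2)(1+s))$ on $[0,1]$ is the same up to a linear change of the time variable, and your explicit treatment of all $t_0\ge\delta$ (rather than only large $t_0$) together with the conversion $t_0^{1/(p-1)-\ell}\lesssim(1+t_0)^{1/(p-1)-\ell}$ is a harmless refinement.
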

\begin{proof}
We only need to prove for large $t$. Arbitrarily fix a large $t_0>0$. Let 
\[
\tilde v(x,t)=t_0^{\frac{1}{1-p}} v(x,t_0t)\quad (x,t)\in\overline\Omega\times[1/2,1].
\]
Then
\begin{align*}
p\tilde v^{p-1}\pa_t \tilde v &=\Delta \tilde v \quad \mbox{in }\Omega\times[1/2,1].
\end{align*}
By the assumption \eqref{eq:lipassumption2}, we have
\[
\frac{1}{2^{\frac{1}{1-p}}c_0}d(x)\le \tilde v(x,t)\le 2^{\frac{1}{1-p}}c_0d(x)\quad\mbox{in }\Omega\times[1/2,1].
\]
By Theorem \ref{thm:bootstrap}, there exists $C>0$ depending only on $n,\Omega,p,\ell$ and $c_0$ such that 
\[
\|\partial_t^\ell \tilde v(\cdot,1) \|_{C^{2+p}(\overline\Omega)}\le C.
\]
That is,
\[
\|\partial_t^\ell v(\cdot,t_0) \|_{C^{2+p}(\overline\Omega)}\le C t_0^{\frac{1}{p-1}-\ell}.
\]
Since $t_0$ is arbitrarily, the conclusion is proved.
\end{proof}

\section{Eventual regularity for solutions with general initial data}\label{sec:eventual}

We now need an approximation argument to pass from the regularity for solutions with compatible initial data to those with general initial data. 

\begin{thm}\label{thm:withokinitialdata}
Let $u_0\in C^2(\Omega)$ be nonnegative such that $u_0^m$ a Lipschitz continuous function on $\overline\Omega$ satisfying
\[
C^{-1}d^{1/m} \leq u_0 \leq Cd^{1/m}\quad \textrm{in}\quad \Omega.
\]
Let $u$ be the weak solution of \eqref{eq:main}. Then 
 $u^m(x,\cdot)\in C^\infty((0,+\infty))$ for every $x\in\overline\Omega$, and
\[
\partial_t^\ell u^m(\cdot,t) \in C^{2+\frac{1}{m}}(\overline\Omega)\quad\mbox{for all }t\in(0,+\infty)\mbox{ and }\ell\in\mathbb{Z}^+\cup\{0\}.
\]
\end{thm}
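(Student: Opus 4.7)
The plan is to reduce to the setting of Section~\ref{sec:alltime} via the substitution $v:=u^m$, $p:=1/m\in(0,1)$. A direct computation shows that $u$ solves~\eqref{eq:main} iff
\[
p\,v^{p-1}\partial_t v=\Delta v \quad\text{in }\Omega\times(0,\infty),\qquad v=0\text{ on }\partial\Omega\times(0,\infty),
\]
and the hypothesis translates into $v_0:=u_0^m\in \mathrm{Lip}(\overline\Omega)\cap C^2(\Omega)$ with $c_0^{-1}d\le v_0\le c_0\,d$. However, $v_0$ need not satisfy the Kim--Lee compatibility condition $d^{1-p}D^2 v_0\in C^{\alpha_0}(\overline\Omega)$ required in Theorem~\ref{thm:long-time0}. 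The idea is therefore to approximate $u_0$ by compatible data, apply Theorems~\ref{thm:long-time0}--\ref{thm:uniformestimate} to extract $k$-uniform estimates, and then pass to the limit.

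First I construct a sequence $\{v_0^{(k)}\}$ with: (i) $(2c_0)^{-1}d\le v_0^{(k)}\le 2c_0\,d$ uniformly in $k$; (ii) $v_0^{(k)}\in C^\infty(\overline\Omega)$, so in particular $d^{1-p}D^2 v_0^{(k)}\in C^{1-p}(\overline\Omega)$; (iii) $v_0^{(k)}\to v_0$ in $L^1(\Omega)$ as $k\to\infty$. A concrete construction: fix $\tilde d\in C^\infty(\overline\Omega)$ positive with $\tilde d=d$ in a neighborhood of $\partial\Omega$, write $v_0=\varphi_0\,\tilde d$ so that $c_0^{-1}\le\varphi_0\le c_0$, mollify and truncate $\varphi_0$ to obtain $\varphi_k\in C^\infty(\overline\Omega)$ with $(2c_0)^{-1}\le\varphi_k\le 2c_0$ and $\varphi_k\to\varphi_0$ in $L^1(\Omega)$, and set $v_0^{(k)}:=\varphi_k\,\tilde d$. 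Since $m>1$, one has $\alpha_0:=1-p=(m-1)/m\in\bigl(0,\min\{\tfrac{2(1-p)}{1+p},1\}\bigr)$, so (ii) is exactly the Kim--Lee compatibility of Theorem~\ref{thm:long-time0}.

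Let $u^{(k)}$ be the weak solution of~\eqref{eq:main} with initial datum $u_0^{(k)}:=(v_0^{(k)})^{1/m}$, and set $v^{(k)}:=(u^{(k)})^m$. By Theorem~\ref{thm:long-time0}, $v^{(k)}\in\mathscr{C}^{2+\alpha_0}(\overline\Omega\times[0,+\infty))$ and satisfies the two-sided bound~\eqref{eq:decayinv} with constants depending only on $n,m,\Omega,c_0$, not on $k$. Theorem~\ref{thm:uniformestimate} then yields, for every $\ell\in\mathbb{Z}^+\cup\{0\}$ and every $\delta>0$,
\[
\|\partial_t^\ell v^{(k)}(\cdot,t)\|_{C^{2+1/m}(\overline\Omega)}\le C(n,m,\Omega,\ell,\delta,c_0)\,(1+t)^{\frac{m}{1-m}-\ell}\quad\text{for all }t\ge\delta,
\]
uniformly in $k$.

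Finally I pass to the limit. The $L^1$-contraction for weak solutions of the PME gives $\|u^{(k)}(\cdot,t)-u(\cdot,t)\|_{L^1(\Omega)}\le\|u_0^{(k)}-u_0\|_{L^1(\Omega)}\to 0$, hence (up to a subsequence) $v^{(k)}\to v=u^m$ pointwise a.e.\ on $\Omega\times(0,\infty)$. Combined with the $k$-uniform $C^{2+1/m}$-estimate above and Arzel\`a--Ascoli, this yields $\partial_t^\ell v^{(k)}\to \partial_t^\ell u^m$ in $C^{2+1/m-\varepsilon}(\overline\Omega\times[\delta,T])$ for every $T>\delta>0$ and every $\varepsilon>0$, from which both $\partial_t^\ell u^m(\cdot,t)\in C^{2+1/m}(\overline\Omega)$ for every $t>0$ and $\ell$, and $u^m(x,\cdot)\in C^\infty((0,\infty))$ uniformly in $x\in\overline\Omega$, follow. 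The main obstacle is the construction of $v_0^{(k)}$: it must simultaneously satisfy the delicate compatibility condition $d^{1-p}D^2 v_0^{(k)}\in C^{\alpha_0}(\overline\Omega)$ and the two-sided bound $v_0^{(k)}\asymp d$ with constants uniform in $k$, so that the $k$-independent quantitative estimates of Section~\ref{sec:alltime} actually apply. Once this is arranged, the remaining argument is a soft compactness one.
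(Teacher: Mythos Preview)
Your approach is correct and follows the same overall strategy as the paper: approximate $v_0=u_0^m$ by compatible data, invoke Theorems~\ref{thm:long-time0}--\ref{thm:uniformestimate} to obtain $k$-uniform estimates, and pass to the limit. The differences are purely in the choice of approximation and of the convergence argument.

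The paper constructs the approximating data not by mollifying $v_0/\tilde d$ but by interpolating with the steady state: with a cutoff $\phi_\delta$ that equals $0$ in a $\delta/2$-neighborhood of $\partial\Omega$ and $1$ outside a $\delta$-neighborhood, it sets
\[
v_{0,\delta}=\phi_\delta\,v_0+(1-\phi_\delta)\,S^m.
\]
This has two advantages. First, $v_{0,\delta}$ equals $S^m$ near $\partial\Omega$, so the Kim--Lee compatibility $d^{1-p}D^2 v_{0,\delta}\in C^\alpha(\overline\Omega)$ is trivially inherited from the regularity of $S^m$, with no need to track what happens to $d^{1-p}$ times a smooth function. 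Second, $|v_{0,\delta}-v_0|\le C(1-\phi_\delta)\,d\le C\delta$, so the approximating data converge \emph{uniformly} on $\overline\Omega$; this lets the paper conclude $v_\delta\to v$ directly from Arzel\`a--Ascoli together with uniqueness of weak solutions, without invoking the $L^1$-contraction or passing through the nonlinear map $v\mapsto v^{1/m}$. Your route via mollification of $\varphi_0$ and $L^1$-contraction works too, but requires a bit more care (smooth truncation so that $\varphi_k$ stays $C^\infty$, extension of $\varphi_0$ outside $\Omega$, and the observation $|a^{1/m}-b^{1/m}|\le|a-b|^{1/m}$ to transfer $L^1$ convergence from $v_0^{(k)}$ to $u_0^{(k)}$). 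The paper's choice sidesteps these technicalities.
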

\begin{proof}
Let $p=1/m$, 
\[
v_0=u_0^m\quad\mbox{and}\quad v=u^m.
\]
For every sufficiently small $\delta>0$, let $\eta_\delta\in C^\infty(\R)$ be such that $\eta_\delta\equiv 0$ on $(-\infty,\delta/2)$, $\eta_\delta\equiv 1$ on $(\delta,+\infty)$, $0\le\eta_\delta\le 1$ on $\R$ and $|\eta'_\delta|\le \frac{4}{\delta}$ on $[\frac{\delta}{2},\delta]$.  For $x\in\Omega$, set
\[
\phi_\delta(x)=\eta_\delta(d(x)).
\]
Then $\phi_\delta\equiv 0$ in $\{x\in\Omega: d(x)<\delta/2\}$, and $\phi_\delta(x)\equiv 1$ in $\Omega_\delta:=\{x\in\Omega: d(x)>\delta\}$. So we can extend $\phi_\delta$ to be identically zero in $\R^n\setminus\Omega$ such that $\phi_\delta\in C^\infty(\R^n)$. Let 
\[
v_{0,\delta}= \phi_\delta v_0 + (1-\phi_\delta) S^m,
\]
where $S$ is the unique solution of \eqref{eq:stationary}. It is elementary to check that there exist $C_0,c_0>0$ independent of $\delta$ such that
\[
\frac{1}{c_0}\le\inf_\Omega\frac{v_{0,\delta}}{d}\le \sup_\Omega\frac{v_{0,\delta}}{d}\le c_0,
\]
and
\[
|\nabla v_{0,\delta}|\le C_0.
\]
Note that $v_{0,\delta}=S^m$ near $\partial\Omega$. Hence, $v_{0,\delta}$ satisfies the assumptions of Theorem \ref{thm:long-time0}. Therefore, there exists a unique positive function $v_\delta\in \mathscr{C}^{2+\al}(\overline \om\times [0,+\infty))$ satisfying that
\[
pv^{p-1}_\delta \pa_t v_\delta =\Delta v_\delta \quad \mbox{in }\om\times [0,+\infty),
\]
\[
v_\delta(\cdot,0)=v_{0,\delta}, \quad v_\delta=0 \quad \mbox{on }\pa \om \times [0,+\infty),
\]
and \eqref{eq:decayinv} holds. Consequently, by Theorem \eqref{thm:uniformestimate}, for every $\va>0$, there exists $C_1>0$ depending only on $n,\Omega,m,\va$ and $c_0$ but independent of $\delta$ such that
\[
\|v_{\delta}\|_{\mathscr{C}^{2+\al}(\overline \om\times [\va,+\infty))}\le C_1.
\]
Since
\[
|v_{0,\delta}-v_0|\le C(1-\phi_\delta) d(x)\le C\delta,
\]
we have $v_{0,\delta}$ converges to $v_0$ uniformly on $\overline\Omega$ as $\delta\to 0$. Hence, by the Arzel\`a–Ascoli theorem and the uniqueness of the weak solutions, $v_\delta \to v$ locally uniformly on $\overline\om\times (0,+\infty)$. Hence, $v\in \mathscr{C}^{2+\al}(\overline \om\times [t_0,+\infty))$ for every $t_0>0$. The higher regularity then follows from Theorem~\ref{thm:long-time0}.
\end{proof}

Now we are ready to prove the main results of this paper.

\begin{proof}[Proof of Theorem \ref{eq:mainpme1}.]
By Theorem \ref{thm:withokinitialdata}, we have that $u^m(x,\cdot)\in C^\infty((T^*,+\infty))$ for every $x\in\overline\Omega$, and
\[
\partial_t^\ell u^m(\cdot,t) \in C^{2+\frac{1}{m}}(\overline\Omega)\quad\mbox{for all }t>T^*\mbox{ and }\ell\in\mathbb{Z}^+\cup\{0\}.
\] 
In particular, we can take $\ell=0$ to deduce \eqref{eq-optimal-x}.

The optimality of the exponent follows by recalling that the friendly giant solution \eqref{friendly-giant} does not belong to $C^{2+\frac1m+\varepsilon}(\overline\Omega)$ for any $\varepsilon>0$.
\end{proof}

\begin{proof}[Proof of Corollary \ref{cor-u}.]
Fix $t>T^*$.
Since both $u^m(\cdot,t)$ and $S^m(\cdot,t)$ are $C^{2+\frac1m}(\overline\Omega)$ and vanish linearly on the boundary $\partial\Omega$, it then follows that $u^m/S^m \in C^{1+\frac1m}(\overline\Omega)$, and that $u^m/S^m \asymp 1$ in~$\Omega$.
Thus, the regularity for $u/S$ follows by raising $u^m/S^m$ to the power $\frac1m$.

Finally, the optimality of the exponent follows from Example \ref{example:1} below.
\end{proof}

\begin{proof}[Proof of Theorem \ref{eq:mainpme}.]
Using Theorem \ref{thm:withokinitialdata}, the estimate \eqref{eq:regularityestimateu} follows from \eqref{eq:universalupperbound}, \eqref{eq:bounds} and Theorem \ref{thm:uniformestimate} with $v=u^m$ and $p=\frac{1}{m}$. In the following, we will prove \eqref{eq:decayestimateu} and \eqref{eq:decayestimateu2}.

Let
\begin{align*}
\theta(x,\tau)&=t^\frac{m}{m-1}u^m(x,t)\quad\mbox{with }t=e^\tau.
\end{align*}
Then
\begin{align*} 
\pa_\tau \theta^p &=\Delta \theta + \frac{p}{1-p} \theta^p\quad \mbox{in }\om \times (0,\infty).
\end{align*}
Consequently, the asymptotic expansion in \eqref{eq:stability} becomes
\begin{equation}\label{eq:stabilityinv}
\left\|\frac{\theta(\cdot,\tau)}{\Theta(\cdot)}-1\right\|_{L^\infty(\Omega)}\le C e^{-\tau}\quad\mbox{for all }\tau>1,
\end{equation}
where
\begin{equation}\label{eq:steadytheta}
\Theta=S^m
\end{equation}
satisfying
\begin{equation}\label{eq:steadythetaequation}
-\Delta \Theta - \frac{p}{1-p} \Theta^p=0 \quad\mbox{in }\Omega\quad \mbox{and}\quad \Theta=0\quad\mbox{on }\partial\Omega.
\end{equation}
Let
\[
h=\theta-\Theta.
\]
Then
\begin{align*}
p\theta^{p-1} h_\tau&=\Delta h + c(x,\tau) d(x)^{p-1}h \quad\mbox{in }\Omega\times(\log(T^*+1),\infty),\\
h&=0 \quad\mbox{on }\pa\Omega\times(\log(T^*+1),\infty),
\end{align*}
where
\[
c=\int_0^1 \left[\frac{s \theta+(1-s)\Theta}{d(x)}\right]^{p-1}\,\ud s.
\]
Note that the function $c$ shares the same regularity and the same estimates as those for $\theta$. 
By Theorem \ref{thm:globalxlocalt} and elliptic Schauder estimates on each time slice, there exists $C>0$ depending only on $n,\Omega,p$ and $u_0$ such that for all $T\ge \log T^*+2$, we have
 \begin{align*}
\sup_{\tau\in[T-1,T]}\|h(\cdot,\tau)\|_{C^{2+p}(\overline\om) }\le C\|h\|_{L^\infty(\overline \Omega\times[T-2,T])}\le C e^{-T},
\end{align*}
where we used \eqref{eq:stabilityinv} in the last inequality. In particular,  
\begin{align}\label{eq:finaldifferencek2}
\| \theta(\cdot,T)-\Theta\|_{C^{2+p}(\overline\om) }=\| h(\cdot,T)\|_{C^{2+p}(\overline\om) }\le  C e^{-T}.
\end{align}
Since $\theta(\cdot,\tau)=\Theta(\cdot)=0$ on $\pa\Omega\times[T^*,\infty)$, we have for all $\tau\ge \log T^*+2$ that
\begin{equation}\label{eq:relativeexpdecay}
\left\|\frac{\theta(\cdot,\tau)-\Theta}{\Theta}\right\|_{C^{1+p}(\overline \om)}\le C \|\theta(\cdot,\tau)-\Theta\|_{C^{2+p}(\overline \om)}=C \|h(\cdot,\tau)\|_{C^{2+p}(\overline \om)}\le C e^{-\tau}.
\end{equation}

Now let us obtain a higher order asymptotic expansion. This requires some spectrum analysis on the linearized equation of \eqref{eq:steadythetaequation} in a similar way to those in Bonforte-Figalli \cite{BFig} and Choi-McCann-Seis \cite{CMS} for the fast diffusion equation.  Let
\[
L^2(\Omega;\Theta^{p-1}\,\ud x)=\left\{f\in L^2(\Omega): \int_{\Omega}f^2(x)\Theta^{p-1}(x)\,\ud x<+\infty\right\}.
\] 
For $f,g\in L^2(\Omega;\Theta^{p-1}\,\ud x)$, we denote
\[
\langle f,g\rangle=\int_{\Omega}f(x)g(x)\Theta^{p-1}(x)\,\ud x\quad\mbox{and}\quad \|f\|=\sqrt{\langle f,f\rangle}.
\]
Let
\[
\mathcal{L}_\Theta = -\Delta -\frac{p^2}{1-p} \Theta^{p-1}
\]
be the linearized operator of \eqref{eq:steadythetaequation}. Since $\Theta(x)/d(x)$ is uniformly bounded from above and below by two positive constants and $p>0$, then the embedding $H^1_0(\Omega)\hookrightarrow L^2(\Omega;\Theta^{p-1}\,\ud x)$ is compact, and thus, $[\Theta^{1-p}(-\Delta)]^{-1}$ is a compact operator from $L^2(\Omega;\Theta^{p-1}\,\ud x)$ to itself. Therefore, the weighted eigenvalue problem
\be \label{eigenproblem}
\left\{\begin{array}{rcll}
\mathcal{L}_{\Theta} (\psi)  &=& \mu \Theta^{p-1}\psi   \  & \mbox{on } \Omega,\\
\Theta&=&0 \   &\mbox{on } \partial\Omega
\end{array}\right.
\ee
admits eigenpairs $\{ (\mu_j,\psi_j)       \}_{j=1}^{\infty}$ such that

\begin{itemize}
           \item  the eigenvalues with multiplicities can be listed as $\mu_1<\mu_2\le \mu_3 \le \cdots \le \mu_j \to +\infty$ as  $j \to +\infty$,
           \item the eigenfunctions $\{ \psi_j\}_{j=1}^{\infty}$ form a complete orthonormal basis of  $L^2(\Omega; \Theta^{p-1}\,\ud x) $, that is, $\langle \psi_i, \psi_j \rangle =  \delta_{ij}$ for $i,j \in \mathbb{N} $. Moreover, $\psi_1$ does not change signs, and thus, we assume that  $\psi_1\ge 0$.
\end{itemize}
Since $\Theta$ satisfies \eqref{eq:steadythetaequation}, then 
\begin{equation}\label{eq:firsteigen}
\mu_1=p\quad\mbox{and}\quad \psi_1=\frac{\Theta}{\| \Theta  \|}=\frac{\Theta}{\| \Theta  \|_{L^{p+1}(\Omega)}^{\frac{p+1}{2}}}. 
\end{equation}
The equation of $h$ can be rewritten as
\begin{equation}\label{eq:erroreq}
\begin{split}
p\Theta^{p-1} h_\tau&=-\mathcal{L}_\Theta h + N(h) \quad\mbox{in }\Omega\times[T_0,\infty),\\
h&=0 \quad\mbox{on }\pa\Omega\times[T_0,\infty),
\end{split}
\end{equation}
where
\[
N(h)=\frac{p}{1-p} \Theta^{p} \left[\left(\frac{h}{\Theta}+1\right)^{p}- 1-\frac{ph}{\Theta}\right] + p\Theta^{p-1}\left[1-\left(\frac{h}{\Theta}+1\right)^{p-1}\right] h_\tau.
\]
It follows from \eqref{eq:higherintest} and \eqref{eq:relativeexpdecay} that for all $\tau\ge\tau_0$, where $\tau_0$ is sufficiently large, we have
\begin{equation}\label{eq:quadraticerror}
|N(h)|\le C \Theta^{p-2}(h^2+h h_\tau)\le C\Theta^{p} e^{-2\tau} . 
\end{equation}
Therefore, for every $f\in L^2(\Omega;\Theta^{p-1}\,\ud x)$, we have
\[
\int_{\Omega} |N(h(x,\tau))| |f(x)|\,\ud x\le  C e^{-2\tau} \|f\|.
\]
Since $\mu_1=p$, we can define $J$ to be the largest positive integer such that $\mu_J<2p\le \mu_{J+1}$. Multiplying $\psi_j$, $j=1,\cdots,J$, to \eqref{eq:erroreq} and integrating by parts, we obtain
\[
\left|\frac{\ud}{\ud\tau}\langle h, \psi_j \rangle + \frac{\mu_j}{p} \langle h, \psi_j \rangle\right| \le C e^{-2\tau}.
\]
That is,
\[
\left|\frac{\ud}{\ud\tau}\left( e^{\frac{\mu_j}{p}\tau}\langle h, \psi_j \rangle \right)\right| \le C e^{-(2-\frac{\mu_j}{p})\tau}.
\]
Hence, for any $\tau_2>\tau_1\ge \tau_0$, we have
\[
\left|e^{\frac{\mu_j}{p}\tau_2}\langle h(\cdot,\tau_2), \psi_j \rangle -e^{\frac{\mu_j}{p}\tau_1}\langle h(\cdot,\tau_1), \psi_j \rangle\right|\le C e^{-(2-\frac{\mu_j}{p})\tau_1}.
\]
Therefore, the limit $\lim_{\tau\to\infty}e^{\frac{\mu_j}{p}\tau}\langle h(\cdot,\tau), \psi_j \rangle$ exists, and we denote it as $c_j$. Hence,
\[
\langle h(\cdot,\tau), \psi_j \rangle= c_j e^{-\frac{\mu_j}{p}\tau} + O(e^{-2\tau}).
\]
Let $z(\tau)=\|h-\sum_{j=1}^J \langle h(\cdot,\tau), \psi_j \rangle \psi_j\|$. Then we obtain a similar but just one side inequality from \eqref{eq:erroreq}:
\[
\frac{\ud}{\ud\tau} z(\tau)+\frac{\mu_{J+1}}{p} z(\tau) \le Ce^{-2\tau}.
\]
That is
\[
\frac{\ud}{\ud\tau} \left(e^{\frac{\mu_{J+1}}{p} }z(\tau) \right)\le Ce^{-(2-\frac{\mu_{J+1}}{p} )\tau}.
\]
Hence, for all $\tau\ge\tau_0$, we have
\begin{equation*}
z(\tau)\le
\begin{cases}
Ce^{-2  \tau} & \text { if } \mu_{J+1}>2p , \\ 
C\tau e^{-2 \tau} & \text { if } \mu_{J+1}= 2p.
\end{cases} 
\end{equation*}
Therefore, we have
\begin{equation}\label{eq:higherexpansion}
\left\|h(\cdot,\tau)-\sum_{j=1}^J c_j e^{-\frac{\mu_j}{p}\tau}\psi_j\right\|\le
\begin{cases}
e^{-2  \tau} & \text { if } \mu_{J+1}>2p , \\ 
\tau e^{-2 \tau} & \text { if } \mu_{J+1}= 2p.
\end{cases} 
\end{equation}
In particular, 
\begin{equation}\label{eq:secondexpansion}
\left\|h(\cdot,\tau)- c_1 e^{-\frac{\mu_1}{p}\tau}\psi_1\right\|\le
\begin{cases}
Ce^{-\frac{\mu_2}{p}\tau}+Ce^{-2  \tau} & \text { if } \mu_{J+1}>2p , \\ 
Ce^{-\frac{\mu_2}{p}\tau}+C\tau e^{-2 \tau} & \text { if } \mu_{J+1}= 2p.
\end{cases} 
\end{equation}
By using \eqref{eq:firsteigen}, we know that there exists $\gamma>0$ such that
\[
\left\|\theta(\cdot,\tau)-\Theta+ A_1 e^{-\tau}\Theta\right\|\le Ce^{-(1+\gamma)\tau},
\]
where
\[
A_1=-c_1 \| \Theta  \|^{-1}=-c_1 \| \Theta  \|_{L^{p+1}(\Omega)}^{-\frac{p+1}{2}}=-c_1 \|S\|_{L^{m+1}(\Omega)}^{-\frac{m+1}{2}}.
\]
By \eqref{eq:universalupperbound}, we know that $h\le 0$. Hence, $c_1\le 0$, and thus, $A_1\ge 0$.
If we let 
\[
\widetilde h=\theta(\cdot,\tau)-\Theta + A_1 e^{-\tau}\Theta,
\]
then 
\begin{equation}\label{eq:erroreq2}
\begin{split}
p\Theta^{p-1} \widetilde h_\tau&=-\mathcal{L}_\Theta \widetilde h + N(h) \quad\mbox{in }\Omega\times[T_0,\infty),\\
\widetilde h&=0 \quad\mbox{on }\pa\Omega\times[T_0,\infty).
\end{split}
\end{equation}
Since
\[
\|\pa_\tau N(h(\cdot,\tau))\|_{C^p(\overline\Omega)}\le Ce^{-2\tau},
\]
we obtain from Theorem \ref{thm:globalxlocalt} and H\"older's inequality that
\[
\|\widetilde h(\cdot,\tau)\|_{C^{2+p}(\overline\Omega)}\le C \|\widetilde h(\cdot,\tau)\|+Ce^{-2\tau}\le Ce^{-(1+\gamma)\tau}.
\]
Then \eqref{eq:decayestimateu} follows by changing the variables back to $u$ and $t$, and \eqref{eq:decayestimateu2} follows from \eqref{eq:decayestimateu}. 

Finally, in the dimension $n=1$ case, as Berryman \cite{Berryman} observed, $\mu_2=\frac{3p}{1-p}$ with the eigenfunction $\Theta\Theta'$. Hence, $J=1$. Therefore, it follows from \eqref{eq:secondexpansion} that $\gamma=1$.

This finishes the proof of Theorem~\ref{eq:mainpme}.
\end{proof}

\begin{rem} \label{rem:4.2}
In fact, we can keep expanding the solution up to an arbitrary order in the same way as that Han-Li-Li \cite{HLL} did for the singular Yamabe equation. If $J\ge 2$, then $N(h)=C_1e^{-2\tau} \Theta^p +O(e^{-\frac{(\mu_1+\mu_2)\tau}{p}})\Theta^p =:N_1(h)+N_2(h)$, where $C_1$ is a constant. Then one can solve the linear equation \eqref{eq:erroreq}  with the forcing term $N(h)$ replaced by $N_1(h)$ and $N_2(h)$, respectively. The equation with $N_1(h)$ has an explicit solution. Since $N_2(h)=O(e^{-\frac{(\mu_1+\mu_2)\tau}{p}})\Theta^p$, we can expand its solution up to the largest $K$ such that $\mu_{K}<\mu_1+\mu_2$. If $J=1$, then $N(h)=N_1(h)+N_2(h)$ with the same $N_1(h)$, and $N_2(h)=O(e^{-3\tau})\Theta^p $ or $O(\tau e^{-3\tau})\Theta^p $. In each case, $N_2(h)$ has strictly better decay than $N(h)$, and  one can expand the solutions up to the largest $K$ such that $\mu_{K}<3$. One can keep expanding $N(h)$ and iterating this process to reach any desired order, which is ensured by the regularity of the solution $\theta$. In the final expansion, the exponential exponents are not only the $\{\mu_j/p\}$ but also some of their linear combinations. In 1-D, an arbitrarily high order expansion for a special class of solutions under self-similar type coordinates has been obtained by Angenent \cite{Angenent2}.
\end{rem}

In the next example in one spatial dimension, we will show that the $C^{1+\frac{1}{m}}(\overline\Omega)$ regularity of the relative error $\frac{u^m(\cdot,t)}{S^m}$ cannot be improved in a short time after $T^*$.

\begin{example}\label{example:1}
Consider the equations \eqref{eq:main} and \eqref{eq:stationary} for $\Omega=(0,1)\subset\R$. By the regularity of $S$, we have the expansion of $S^m$ near $x=0$:
\[
S^m=ax-\frac{a^{\frac{1}{m}}m^2}{(m-1)(m+1)(2m+1)} x^{2+\frac{1}{m}}+\mbox{higher order terms}
\]
for some constant $a>0$.

Let $v_0$ be a function such that 
\begin{equation*}
v_0(x)=\left\{
\begin{array}{rcll}
&ax+\frac{a^{\frac{1}{m}}m^2}{(m-1)(m+1)(2m+1)} x^{2+\frac{1}{m}}&\quad\mbox{for }x\in [0,1/4],\\
&a(1-x)+\frac{a^{\frac{1}{m}}m^2}{(m-1)(m+1)(2m+1)} (1-x)^{2+\frac{1}{m}}&\quad\mbox{for }x\in [3/4,1],
\end{array}
\right.
\end{equation*}
and $v_0$ is smooth and positive in $(1/8,7/8)$. Let $d(x)=\min(x,1-x)$ for $x\in [0,1/4]\cup [3/4,1]$, and $d(x)$ be smooth and positive in $(1/8,7/8)$. Then both $d^{1-p} (v_0)''$ and $d^{1-p} [d^{1-p} (v_0)'']''$ are H\"older continuous on $[0,1]$. Then by the Schauder estimate in Theorem \ref{thm:global-schauder} and the implicit function theorem, one can show that there exists $T>0$ and a unique positive function $v\in \mathscr{C}^{2+\al}([0,1] \times [0,T])$ satisfying that $v_t\in \mathscr{C}^{2+\al}([0,1] \times [0,T])$ and
\[
pv^{p-1}\pa_t v =\Delta v \quad \mbox{in }[0,1]\times [0,T],
\]
\[
v(\cdot,0)=v_0, \quad v=0 \quad \mbox{on } \{0,1\} \times [0,T].
\]
This solution $v$ is more regular in the time variable than the one obtained by Theorem \ref{thm:short-time}. This is achievable because the initial condition $v_0$ is more regular. In fact, such a solution $v$ can be obtained by a second approximation (in the time variable) in a similar way to that in Theorem 3.2 of \cite{JX19}.

Let $u=v^{1/m}$. Then by the regularity of $u^m$ and $(u^m)_t$, we have near $x=0$ that
\begin{align}
u^m &= A(t)x+O(x^{1+\alpha}),\label{eq:expansion1}\\
  (u^m)_t &= B(t)x+O(x^{1+\alpha})\nonumber
\end{align}
for some positive continuous functions $A(t)$ and $B(t)$ on $[0,1]$ with $A(0)=a$ and $B(0)=\frac{am}{m-1}$, and some $\alpha>0$, where $O(x^{1+\alpha})$ is uniform on $[0,T]$.
Then
\begin{align*}
 u_t &= \frac{1}{m} u^{1-m} (u^m)_t \\
         &= \frac{1}{m} \left(A(t)x+O(x^{1+\alpha})\right)^{\frac{1-m}{m}}(B(t)x+O(x^{1+\alpha}))\\
         &= \frac{1}{m}A(t)^{\frac{1-m}{m}} B(t) x^{\frac{1}{m}} (1+O(x^\alpha))^{\frac{1-m}{m}}(1+O(x^\alpha))\\
         &= \frac{1}{m}A(t)^{\frac{1-m}{m}} B(t)x^{\frac{1}{m}} (1+ O(x^\alpha)).
\end{align*}
Since $(u^m)_{xx}=u_t$,  then integrating in $x$ and using \eqref{eq:expansion1}, we have that 
\[
(u^m)_{x}(x,t)- (u^m)_{x}(0,t)=\frac{1}{m}A(t)^{\frac{1-m}{m}} B(t)x^{1+\frac{1}{m}} (1+ O(x^\alpha)).
\]
That is,
\[
(u^m)_{x}(x,t)=A(t)+\frac{1}{m+1}A(t)^{\frac{1-m}{m}} B(t)x^{1+\frac{1}{m}} (1+ O(x^\alpha)).
\]
Integrating in $x$ again, we have
\[
(u^m)(x,t)=A(t)x+\frac{m}{(m+1)(2m+1)}A(t)^{\frac{1-m}{m}} B(t)x^{2+\frac{1}{m}} (1+ O(x^\alpha)).
\]
Hence, near $x=0$, we have
\[
\frac{u^m}{S^m}=\frac{A(t)}{a} \left(1+ f(t) x^{1+\frac{1}{m}}+\mbox{higher order terms}\right),
\]
where
\[
f(t)=\frac{m}{(m+1)(2m+1)}A(t)^{\frac{1-2m}{m}} B(t) + \frac{a^{\frac{1-m}{m}}m^2}{(m-1)(m+1)(2m+1)}.
\]
Since $f(t)$ is continuous on $[0,T]$ and
\[
f(0)=\frac{2a^{\frac{1-m}{m}}m^2}{(m-1)(m+1)(2m+1)}>0,
\]
we have $f(t)>0$ on $[0,\widetilde T]$ for some small $\widetilde T>0$. Hence, $\frac{u^m(\cdot,t)}{S^m}$ is precisely $C^{1+\frac{1}{m}}([0,1])$ for $t\in[0,\widetilde T]$.
\end{example}

\small

\bigskip

\noindent T. Jin

\noindent Department of Mathematics, The Hong Kong University of Science and Technology\\
Clear Water Bay, Kowloon, Hong Kong\\[1mm]
Email: \textsf{tianlingjin@ust.hk}

\medskip

\noindent X. Ros-Oton

\noindent ICREA, Pg. Llu\'is Companys 23, 08010 Barcelona, Spain \& Universitat de Barcelona, Departament de Matem\`atiques i Inform\`atica, Gran Via de les Corts Catalanes 585, 08007 Barcelona, Spain \& Centre de Recerca Matem\`atica, Barcelona, Spain.\\[1mm]
Email: \textsf{xros@icrea.cat}

\medskip

\noindent J. Xiong

\noindent School of Mathematical Sciences, Laboratory of Mathematics and Complex Systems, MOE\\ Beijing Normal University, 
Beijing 100875, China\\[1mm]
Email: \textsf{jx@bnu.edu.cn}


\begin{thebibliography}{99}

\bibitem{Akagi} G. Akagi,
             \emph{Rates of convergence to non-degenerate asymptotic profiles for fast diffusion via energy methods.}
             arXiv:2109.03960. 
             
\bibitem{Angenent} S. Angenent,
\emph{Analyticity of the interface of the porous media equation after the waiting time.}
Proc. Amer. Math. Soc. \textbf{102} (1988), no. 2, 329--336.

\bibitem{Angenent2} S. Angenent,
\emph{Large time asymptotics for the porous media equation.} Nonlinear diffusion equations and their equilibrium states, I (Berkeley, CA, 1986), 21--34,
Math. Sci. Res. Inst. Publ., 12, Springer, New York, 1988.

\bibitem{A} D. G. Aronson, 
            \emph{Regularity propeties of flows through porous media.} 
            SIAM J. Appl. Math. \textbf{17} (1969), 461--467.

\bibitem{A70}
            D. G. Aronson, 
            \emph{Regularity properties of flows through porous media: The interface.} 
            Arch. Rational Mech. Anal. \textbf{37} (1970), 1--10.
            
\bibitem{AB} D. G. Aronson and P. B\'enilan, 
         \emph{R\'egularit\'e des solutions de l'\'equation des milieux poreux dans $\mathbb{R}^n$.} (French) 
         C. R. Acad. Sci. Paris S\'er. A-B \textbf{288} (1979), no. 2, A103--A105.

\bibitem{ACV} D. G. Aronson, L. A. Caffarelli and J. L. V\'azquez, 
         \emph{Interfaces with a corner point in one-dimensional porous medium flow}.
Comm. Pure Appl. Math. \textbf{38} (1985), no. 4, 375--404.

\bibitem{AP1981} D. G. Aronson and L. A. Peletier, 
         \emph{Large time behaviour of solutions of the porous medium equation in bounded domains}.
J. Differential Equations \textbf{39} (1981), no. 3, 378--412.

\bibitem{AV87} D. G. Aronson and J. L. V\'azquez,
         \emph{Eventual $C^\infty$-regularity and concavity for flows in one-dimensional porous media.}
Arch. Rational Mech. Anal. \textbf{99} (1987), no. 4, 329--348.

\bibitem{AGS}  B. Avelin, U. Gianazza and S. Salsa, 
           \emph{Boundary estimates for certain degenerate and singular parabolic equations.} 
           J. Eur. Math. Soc. (JEMS) \textbf{18} (2016), no. 2, 381--424.

\bibitem{Berryman} J. G. Berryman,  
      \emph{Evolution of a stable profile for a class of nonlinear diffusion equations with fixed boundaries}.
          J. Math. Phys. \textbf{18} (1982), no. 11, 2108–2115.
           
 \bibitem{BH} J. G. Berryman and C. J. Holland,
            \emph{Stability of the separable solution for fast diffusion.}
            Arch. Rational Mech. Anal. \textbf{74} (1980), 379--388.


            
\bibitem{BFig} M. Bonforte and A. Figalli, 
           \emph{Sharp extinction rates for fast diffusion equations on generic bounded domains.} 
           Comm. Pure Appl. Math. \textbf{74} (2021), no. 4, 744--789. 
           
\bibitem{BFR17} M. Bonforte, A. Figalli, X. Ros-Oton, \emph{Infinite speed of propagation and regularity of solutions to the fractional porous medium equation in general domains}. Comm. Pure Appl. Math. \textbf{70} (2017), 1472--1508.

\bibitem{BFV18} M. Bonforte, A. Figalli, J. L. V\'azquez, \emph{Sharp global estimates for local and nonlocal porous medium-type equations in bounded domains}. Anal. PDE \textbf{11} (2018), 945--982.
           
           
\bibitem{BGV} M. Bonforte, G. Grillo and J. L. V\'azquez, 
          \emph{Behaviour near extinction for the fast diffusion equation on bounded domains.} 
          J. Math. Pures Appl. \textbf{97} (2012), 1--38. 

\bibitem{BSV15} M. Bonforte, Y. Sire and J. L. V\'azquez,  
          \emph{Existence, uniqueness and asymptotic behaviour for fractional porous medium equations on bounded domains}. Discrete Contin. Dyn. Syst. \textbf{35} (2015), no. 12, 5725--5767.

\bibitem{BV15} M. Bonforte and J. L. V\'azquez, 
          \emph{A priori estimates for fractional nonlinear degenerate diffusion equations on bounded domains.} Arch. Ration. Mech. Anal. \textbf{218} (2015), no. 1, 317--362.

          
\bibitem{CF} L. A. Caffarelli and A. Friedman,
          \emph{Regularity of the free boundary of a gas flow in an n-dimensional porous medium.}
Indiana Univ. Math. J. \textbf{29} (1980), no. 3, 361--391.

\bibitem{CVW} L. A. Caffarelli, J. L. V\'azquez and  N. I. Wolanski,
          \emph{Lipschitz continuity of solutions and interfaces of the N-dimensional porous medium equation.}
Indiana Univ. Math. J. \textbf{36} (1987), no. 2, 373--401.

\bibitem{CW} L. A. Caffarelli and N. Wolanski, 
          \emph{$C^{1,\alpha}$ regularity of the free boundary for the N-dimensional porous media equation.}
Comm. Pure Appl. Math. \textbf{43} (1990), no. 7, 885--902.

 \bibitem{CDi} Y. Z. Chen and E. DiBenedetto,
         \emph{On the local behavior of solutions of singular parabolic equations}.
Arch. Rational Mech. Anal. \textbf{103} (1988), no. 4, 319--345.

\bibitem{CMS} B. Choi, R. J. McCann and C. Seis,
               \emph{Asymptotics near extinction for nonlinear fast diffusion on a bounded domain}. 	arXiv:2202.02769.
 
\bibitem{DKenig} B. E. J. Dahlberg  and C. Kenig, 
       \emph{Nonnegative solutions the of the initial-Dirichlet problem for generalized porous medium equation in cylinders.} 
      J. Amer. Math. Soc. \textbf{1} (1988), 401--412.

\bibitem{DH} P. Daskalopoulos and R. Hamilton, 
       \emph{Regularity of the free boundary for the porous medium equation.} 
       J.  Amer. Math. Soc. \textbf{11} (1998), 899--965.
               
\bibitem{DHL} P. Daskalopoulos, R. Hamilton and K. Lee, 
       \emph{All time $C^\infty$-regularity of the interface in degenerate diffusion: a geometric approach.}
Duke Math. J. \textbf{108} (2001), no. 2, 295--327.
                      
\bibitem{DaK} P. Daskalopoulos and C.  Kenig, 
               Degenerate diffusions. Initial value problems and local regularity theory. EMS Tracts in Mathematics, 
               1. European Mathematical Society (EMS), Z\"urich, 2007. 


\bibitem{DiBenedetto} E. DiBenedetto, 
            \emph{Continuity of weak solutions to a general porous medium equation.} 
            Indiana Univ. Math. J. \textbf{32} (1983), no. 1, 83--118. 

           
\bibitem{DKV} E. DiBenedetto, Y. C. Kwong and V. Vespri,
          \emph{Local space-analyticity of solutions of certain singular parabolic equations}.
          Indiana Univ. Math. J. \textbf{40} (2) (1991), 741--765.



\bibitem{FGS} E. B. Fabes, N. Garofalo and S. Salsa, 
        \emph{A backward Harnack inequality and Fatou theorem for nonnegative solutions of parabolic equations.} 
        Illinois J. of Math. \textbf{30} (1986), 536--565.
        
\bibitem{FabS} E. B. Fabes and M. V. Safonov, 
        \emph{Behavior near the boundary of positive solutions of second order parabolic equations.} 
        J. Fourier Anal. and Appl. \textbf{96} (1997), 871--882.

\bibitem{FS} E. Feireisl and F. Simondon,  
         \emph{Convergence for semilinear degenerate parabolic equations in several space dimension.} 
         J. Dynam. Differential Equations \textbf{12} (2000), 647--673. 

\bibitem{GG} M. Giaquinta and E. Giusti,
         \emph{On the regularity of the minima of variational integrals}.
         Acta Math. \textbf{148} (1982), 31--46.

\bibitem{HLL} Q. Han, X. Li and Y. Li, 
         \emph{Asymptotic expansions of solutions of the Yamabe equation and the $\sigma_k$-Yamabe equation near isolated singular points.}
Comm. Pure Appl. Math. \textbf{74} (2021), no. 9, 1915--1970.

\bibitem{GW} M. Gr\"uter and K.-O. Widman, 
         \emph{The Green function for uniformly elliptic equations}.
Manuscripta Math. \textbf{37} (1982), no. 3, 303--342.

\bibitem{HK600} K. H\"ollig and H. O. Kreiss, 
          \emph{$C^\infty$-regularity for the porous medium equation}. 
          Math. Z. \textbf{192} (1986), no. 2, 217--224. 

\bibitem{JX19} T. Jin and J. Xiong,
          \emph{Optimal boundary regularity for fast diffusion equations in bounded domains}. 
          arXiv:1910.05160, to appear in Amer. J. Math.

\bibitem{JX20} T. Jin and J. Xiong,
          \emph{Bubbling and extinction for some fast diffusion equations in bounded domains}. 
          arXiv:2008.01311.
          
\bibitem{JX22} T. Jin and J. Xiong,
          \emph{Regularity of solutions to the Dirichlet problem for fast diffusion equations}. 
          arXiv:2201.10091.

\bibitem{KKV} C. Kienzler, H. Koch and J.L. V\'azquez, 
          \emph{Flatness implies smoothness for solutions of the porous medium equation.} 
          Calc. Var. Partial Differential Equations \textbf{57} (2018), no. 1, Art. 18, 42 pp.

\bibitem{Keldys} M. V. Keldy\u{s}, 
          \emph{On certain cases of degeneration of equations of elliptic type on the boundary of a domain.} 
          Doklady Akad. Nauk SSSR (N.S.) \textbf{77} (1951), 181--183.
          


\bibitem{KimLee} S. Kim and K.-A. Lee, 
          \emph{Smooth solution for the porous medium equation in a bounded domain.}
          J. Differential Equations \textbf{247} (2009), no. 4, 1064--1095.
          
\bibitem{Koch} H. Koch, 
           \emph{Non-Euclidean singular integrals and the porous medium equation.} 
           Habilitation. Universit\"at Heidelberg, Heidelberg, 1999.  

\bibitem{Knerr77} B. F. Knerr, 
           \emph{The porous medium equation in one dimension}. 
           Trans. Amer. Math. Soc. \textbf{234} (1977), no. 2, 381--415.

\bibitem{Kr} N. V. Krylov, 
         \emph{Boundedly inhomogeneous elliptic and parabolic equations in a domain.} 
         Izv. Akad. Nauk SSSR Ser. Mat. \textbf{47} (1983), no. 1, 75--108; 
         English translation: Math. USSR Izv. \textbf{22} (1984), no. 1, 67--98.
   
\bibitem{KMN} T. Kuusi, G. Mingione and K. Nystr\"om, 
         \emph{A boundary Harnack inequality for singular equations of parabolic type.} 
         Proc. Amer. Math. Soc. \textbf{142} (2014), 2705--2719. 
  
\bibitem{KwongY} Y. C. Kwong, 
          \emph{Interior and boundary regularity of solutions to a plasma type equation.} 
          Proc. Amer. Math. Soc. \textbf{104} (1988), no. 2, 472--478. 

\bibitem{KwongY3} Y. C. Kwong, 
          \emph{Asymptotic behavior of a plasma type equation with finite extinction}. 
          Arch. Rational Mech. Anal. \textbf{104} (1988), no. 3, 277--294.


\bibitem{KwongY2} Y. C. Kwong, 
          \emph{Boundary behavior of the fast diffusion equation}.
Trans. Amer. Math. Soc. \textbf{322} (1990), no. 1, 263--283.

\bibitem{LSU} O. A. Ladyzenskaja, V. A. Solonnikov and N. N. Ural'ceva, ``Linear and quasilinear equations of
parabolic type". Translated from the Russian by S. Smith. Translations of Mathematical Monographs,
Vol. 23 American Mathematical Society, Providence, R.I. 1968.


\bibitem{OKC}  O. A. Oleinik, A. S. Kalashnikov and Y.-I. Chzou,  
           \emph{The Cauchy problem and boundary problems for equations of the type of unsteady filtration.} 
           Izv. Akad. Nauk SSR Ser. Math. \textbf{22} (1958), 667--704.

\bibitem{OR} O. A.  Oleinik and E. V. Radkevic, 
         \emph{Second order differential equations with nonnegative characteristic form.} 
         Amer. Math. Soc., RI/Plenum Press, New York, 1973.
           
\bibitem{Sacks} P. E. Sacks, 
          \emph{Continuity of solutions of a singular parabolic equation.} 
          Nonlinear Anal. \textbf{7} (1983), no. 4, 387--409. 

\bibitem{Vaz2004} J. L. V\'azquez, 
      \emph{The Dirichlet problem for the porous medium equation in bounded domains. Asymptotic behavior}.
Monatsh. Math. \textbf{142} (2004), no. 1--2, 81--111.
                
\bibitem{Vaz} J. L. V\'azquez, 
        The Porous Medium Equation. Mathematical Theory. Oxford Mathematical Monographs, The Clarendon Press, Oxford University Press, Oxford, 2007. 

\bibitem{WWYZ} C. Wang, L. Wang, J. Yin, S. Zhou, 
      \emph{H\"older continuity of weak solutions of a class of linear equations with boundary degeneracy.} 
      J. Differential Equations \textbf{239} (2007), no. 1, 99--131.


\end{thebibliography}
\end{document}